\theoremstyle{definition}
\newtheorem{thm}{Theorem}[section]
\newtheorem{prop}[thm]{Proposition}
\newtheorem{cor}[thm]{Corollary}
\newtheorem{lem}[thm]{Lemma}
\newtheorem{defi}[thm]{Definition}
\newtheorem{rem}[thm]{Remark}
\newtheorem{ex}[thm]{Example}
\newtheorem{thmintro}{Theorem}
\newcommand{\Q}{\mathbb{Q}}
\newcommand{\QQ}{\mathbb{Q}}
\newcommand{\Z}{\mathbb{Z}}
\newcommand{\Id}{\operatorname{Id}}
\newcommand{\id}{\operatorname{id}}
\title{Poincar\'e dualization and Massey products}
\begin{document}
    \author{Aleksandar Milivojevi\'c}
	\address{University of Waterloo, Faculty of Mathematics, 200 University Avenue West, N2L 3G1, Waterloo, Ontario }
	\email{amilivoj@uwaterloo.ca}
	\author{Jonas Stelzig}
	\address{ Mathematisches Institut der Ludwig-Maximilians-Universit\"at M\"unchen,
		Theresienstraße 39, 80993 M\"unchen}
  \email{jonas.stelzig@math.lmu.de}
	\author{Leopold Zoller}
    \address{Universitat de Barcelona, Facultat de Matemàtiques i Informàtica, Gran Via de les Corts Catalanes, 585, 08007 Barcelona}
    \email{leopold.zoller@ub.edu}
	\subjclass[2020]{55P62, 55S30, 57N65}
	\keywords{Poincar\'e duality algebras, Massey products, non-zero degree maps, formality}
	
	\begin{abstract} We study the rational homotopy theoretic and geometric properties of a construction which extends any cohomologically connected, finite type cdga to one satisfying cohomological Poincar\'e duality. Using this construction we show that non-trivial quadruple Massey products can pull back trivially under non-zero degree maps of Poincar\'e duality spaces, unlike the case of triple Massey products as studied by Taylor. We also show that a non-zero degree map between formal rational Poincar\'e duality spaces need not be formal. Our consideration of Massey products naturally ties in with cyclic $A_\infty$-algebras modelling Poincar\'e duality spaces.
	\end{abstract}

\maketitle

\setcounter{tocdepth}{1}
\tableofcontents

\section{Introduction}

We treat the following three topics concerning rational Poincar\'e duality and formality: \begin{itemize} \item We study a construction which extends any cohomologically connected, finite type cdga by its dual to one satisfying  Poincar\'e duality on its cohomology, which geometrically corresponds to taking the double of a thickened representing cell complex. \item Using this construction, we build examples of dominant maps to Poincar\'e duality spaces showing that the main result of \cite{MSZ23}, namely that formality is preserved under dominant maps, is sharp in a certain sense. \item We are led to revisit nice algebraic (namely, cyclic) models of Poincar\'e duality spaces, using which we can give quick proofs of known formality results. \end{itemize}

The existence of non-zero degree maps between closed manifolds, giving a relation going by the name of \emph{domination}, has been of substantial interest going back at least to work of Gromov, Milnor, and Thurston in the 1970's, see \cite[p.173]{CT89}. The general empirical observation is that the domain of a non-zero degree map should be ``at least as complicated'' as its target, see e.g. loc. cit. As an instance of this heuristic, the authors showed in \cite{MSZ23} that formality is preserved under domination. This observation was in part motivated by a result of Taylor that non-trivial triple Massey products remain non-trivial upon pullback by a dominant map.
    
One is then naturally led to ask about the behavior of quadruple and higher Massey products under pullback by dominant maps.  In order to address this problem, we detail a construction that extends any cdga to one satisfying Poincar\'e duality on its cohomology, and discuss its functoriality. The construction is simple and by no means new, but we wish to give it a down-to-earth, workable description, and study its rational homotopy theoretic properties, allowing us to easily construct examples by hand. Explicitly we have:

 \begin{thmintro}\label{thmA} For every $n$ and any cohomologically connected, finite type cdga $A$, cohomologically concentrated in degrees $<n$, the square-zero extension by its shifted dual $P_nA:=A\oplus D_nA$ is a new cdga satisfying $n$-dimensional Poincar\'e duality on its cohomology.  This construction has the following properties:
 
 \begin{enumerate}
\item The cdga $P_n A$ admits a cdga retract back to $A$. If $A \to B$ is a morphism of cdga's which admits a retract $B \to A$ of dg-$A$-modules, then $A \to B$ extends to a non-zero degree map $P_n A \to P_n B$. 
 \item The cdga $A$ is formal if and only if $P_n A$ is formal (\Cref{prop:PDnotFormal}, \Cref{cor:qi-invariance}). \item A Massey product is non-trivial on $A$ if and only if it is non-trivial upon inclusion into $P_n A$ (\Cref{prop:PDnotFormal}). \item If $A$ models the rational homotopy type of a finite complex $X$ embedded in Euclidean $n$-space, then $P_n A$ models the double of a thickening of $X$ to a manifold with boundary (\Cref{prop:topological interpretation}). \end{enumerate}
 \end{thmintro}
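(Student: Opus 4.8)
The plan is to treat the opening Poincar\'e duality claim together with the four items in order, isolating the few structural facts that reduce everything else to bookkeeping. For the opening claim, recall that for any cdga $B$ and dg-$B$-module $M$ the trivial square-zero extension $B\oplus M$ (with $M\cdot M=0$) is a cdga; applying this with $B=A$ and $M=D_nA$, where $(D_nA)^k=(A^{n-k})^\vee$ with $A$ acting by $(a\cdot\varphi)(x)=\varphi(ax)$ and $d$ acting dually, produces $P_nA$ as a cdga. Since the differential of $P_nA$ preserves the two summands, $H(P_nA)=H(A)\oplus H(D_nA)$, and since $\operatorname{Hom}(-,\QQ)$ is exact, $H(D_nA)=D_nH(A)$. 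Cohomological connectedness and the degree bound give $H^0(A)=\QQ$ and $H^{\geq n}(A)=0$, so $H^0(P_nA)=\QQ$, $H^n(P_nA)=(H^0A)^\vee=\QQ$, and $H(P_nA)$ is concentrated in degrees $0,\dots,n$. Finally, the cup-product pairing $H^k(P_nA)\times H^{n-k}(P_nA)\to H^n(P_nA)=\QQ$ is, in block form with respect to $H^\bullet(A)\oplus D_nH^\bullet(A)$, the hyperbolic pairing attached to the evaluation $H^k(A)\times(H^k(A))^\vee\to\QQ$ — the $A$--$A$ block vanishes because $H^n(A)=0$ and the dual--dual block because $D_nA$ is square-zero — hence non-degenerate, which is $n$-dimensional Poincar\'e duality.

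For item (1), the projection $P_nA\to A$ killing $D_nA$ is visibly a cdga map and retracts $A\hookrightarrow P_nA$. Given $f\colon A\to B$ with a dg-$A$-module retract $r\colon B\to A$, I would set $F:=f\oplus r^\vee\colon P_nA\to P_nB$ with $r^\vee\varphi=\varphi\circ r$ and $D_nB$ regarded as an $A$-module through $f$. That $F$ is a chain map is the statement that $r$ is; that $F$ is multiplicative follows from $f$ being an algebra map together with the $A$-linearity of $r$, which is exactly the identity $r^\vee(a\cdot\varphi)=f(a)\cdot r^\vee\varphi$ matching the module-action blocks (the dual--dual blocks being zero on both sides). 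On $H^n$ the map $F$ equals $(H^0 r)^\vee=\id_{\QQ}$, so it has non-zero degree. The only care needed here is the consistent choice of Koszul signs in the square-zero extension and its dual module.

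Items (2) and (3) I would deduce from item (1) together with \Cref{prop:PDnotFormal} and \Cref{cor:qi-invariance}. For (3): Massey products are natural, so the cdga maps $i\colon A\hookrightarrow P_nA$ and $\rho\colon P_nA\to A$ with $\rho i=\id$ push defining systems forward in both directions; hence a Massey product is defined on $A$ iff its image is defined on $P_nA$, and — since $0$ in a product pushes forward under any cdga map, and $\rho^*\langle i^*x_1,\dots,i^*x_k\rangle\subseteq\langle x_1,\dots,x_k\rangle$ — it contains $0$ on one side iff it does on the other. For (2), the ``$P_nA$ formal $\Rightarrow$ $A$ formal'' direction is the general fact that formality passes to homotopy retracts: from the minimal model $M_A$, the composite $M_A\to M_{P_nA}\xrightarrow{\ \sim\ }H(P_nA)\xrightarrow{H(\rho)}H(A)$ is a cdga morphism inducing an isomorphism on cohomology because $H(\rho)H(i)=\id$. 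The converse needs that $P_n$ is invariant under quasi-isomorphism; this is not formal, since $P_n$ is contravariant on the dual summand and hence not naively functorial, and it is the content of \Cref{cor:qi-invariance} — one clean route is to check that a quasi-isomorphism admitting a dg-module retract induces a quasi-isomorphism of the $P_n$'s (both components are quasi-isomorphisms, the dual one by two-out-of-three and exactness of $\operatorname{Hom}(-,\QQ)$) and then reduce the general case to this via models, or equivalently to transfer the minimal $A_\infty$-structure and observe that $H(P_nA)$ is the square-zero cyclic extension of $H(A)$, whose transferred product is trivializable iff that on $H(A)$ is. Granting this and $P_nH(A)=H(P_nA)$ finishes ``$A$ formal $\Rightarrow P_nA$ formal''; I expect this quasi-isomorphism invariance to be the \emph{main obstacle} in items (1)--(3).

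For item (4) I would embed a finite complex $X$ representing $A$ into $\mathbb{R}^n$, take a regular neighbourhood $N$ (a compact $n$-manifold with boundary, $N\simeq X$), and form the double $DN=N_1\cup_{\partial N}N_2$, a closed $n$-manifold modelled by $A_{PL}(DN)$. The fold map $DN\to N$ retracts $N_1\hookrightarrow DN$, so $A_{PL}(DN)\cong A_{PL}(N_1)\oplus A_{PL}(DN,N_1)$ as dg-$A_{PL}(N)$-modules; excision gives $A_{PL}(DN,N_1)\simeq A_{PL}(N_2,\partial N_2)$, whose cohomology, as a module over $H^\bullet(A)$, is identified with $D_nH^\bullet(A)$ by Lefschetz duality, and the product of two classes in $H^\bullet(DN,N_1)$ vanishes because $X$ embeds in $\mathbb{R}^n$ (the relevant intersections can be pushed apart there; equivalently $H^\bullet(DN,N_1)$ is a square-zero ideal). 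Thus $H^\bullet(A_{PL}(DN))\cong H^\bullet(P_nA)$ as algebras. The remaining point — upgrading this to a quasi-isomorphism of cdgas by rectifying the module-level Lefschetz duality, e.g. via a cofibrant model of $A$ and the homotopy-pushout presentation $A_{PL}(DN)\simeq A_{PL}(N_1)\times^h_{A_{PL}(\partial N)}A_{PL}(N_2)$, or by comparing minimal models — is the technical heart of this item.
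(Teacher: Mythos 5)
Your treatment of the Poincar\'e duality claim, item (1), item (3), and the direction ``$P_nA$ formal $\Rightarrow A$ formal'' of item (2) matches the paper's proofs (the square-zero extension lemmas of \Cref{poincaredualization}, \Cref{lem:naturality}, \Cref{prop:PDnotFormal} with \Cref{trivialMassey}, and \Cref{formaldualization}). The two places you flag as the main obstacles are, however, where genuine gaps remain. For the converse of (2), your ``clean route'' --- that a quasi-isomorphism admitting a dg-$A$-module retract induces a quasi-isomorphism of the $P_n$'s, with the general case reduced to this ``via models'' --- does not go through as stated: a general quasi-isomorphism of cdga's (e.g.\ $M_A\to A$, or either leg of a zigzag realizing formality) need not admit a \emph{strict} dg-module retract, and $P_n$ applied to a map without such a retract is not even defined. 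This is precisely why the paper passes to $A_\infty$-bimodules (\Cref{inftyPDsection}): there a quasi-inverse $B\to A$ of $A_\infty$-$A$-bimodules always exists, \Cref{extendinfty} extends it to $P_nA\to P_nB$, and \Cref{cor:qi-invariance} gives quasi-isomorphism invariance; applied to $A\simeq H(A)$ this yields $P_nA\simeq P_nH(A)=(H(A)\oplus D_nH(A),0)$, hence formality. Your alternative sketch via transferred $A_\infty$-structures is in the right spirit but is not carried out.

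For item (4) the gap is more serious: you establish an isomorphism of cohomology \emph{rings} $H(A_{PL}(D(M)))\cong H(P_nA)$, but a ring isomorphism on cohomology does not determine the rational homotopy type, and the step you defer (``rectifying the module-level Lefschetz duality'' to a cdga quasi-isomorphism) is the entire content of the claim. The paper's proof of \Cref{prop:topological interpretation} avoids this issue altogether: the composite $X\hookrightarrow M\hookrightarrow D(M)\to M\to X$ being the identity gives a strict dg-module retract of $A_{PL}(X)\to A_{PL}(D(M))$; since $A_{PL}(D(M))$ is a Poincar\'e duality cdga, \Cref{7.1} (Poincar\'e duality as a $B$-module quasi-isomorphism $B\to D_nB$, inverted in $A_\infty$-bimodules) extends this to an $A_\infty$-morphism $P_n(A_{PL}(X))\to A_{PL}(D(M))$, which is automatically injective on cohomology (non-zero degree between PD algebras) and is then a quasi-isomorphism by the Betti-number count $B(D(M))=2B(X)$ of \Cref{lem:bettinumberdouble} (Mayer--Vietoris plus Lefschetz duality). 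You would need either this mechanism or a genuine rectification argument to close item (4).
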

 Making use of this \emph{Poincar\'e dualization} construction, we show:

 \begin{thmintro}\label{thmB}(\Cref{thm: counterex}) There exists a non-zero degree map $Y \to X$ between rational Poincar\'e duality spaces, where $X$ carries a non-trivial quadruple Massey product whose pullback to $Y$ is trivial.
 \end{thmintro}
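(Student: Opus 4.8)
The plan is to work throughout with commutative dg algebras and invoke \Cref{thmA} at the end. I would start from a small cohomologically connected cdga carrying a non-trivial fourfold Massey product; a natural choice is the minimal cdga $A$ with generators $x_1,x_2,x_3,x_4$ of degree $2$, then $p,q,r$ of degree $3$ with $dp=x_1x_2$, $dq=x_2x_3$, $dr=x_3x_4$, and finally $s,t$ of degree $4$ with $ds=x_1q-px_3$ and $dt=x_2r-qx_4$. Here $\langle x_1,x_2,x_3,x_4\rangle$ is defined and is represented by a degree-$6$ cocycle of the shape $c=\pm x_1t\pm pr\pm sx_4$, and the first genuine task is to show this Massey product is \emph{non-trivial}: an explicit computation of $H^6(A)$ shows that it is, modulo the (fourfold) indeterminacy, spanned by classes of triple products of the $x_i$, whereas the ``$pr$'' summand of $c$ — a product of the auxiliary generators — is not of that form, so $c$ represents a nonzero class outside the indeterminacy. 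I would then replace $A$ by its finite-dimensional truncation $\bar A:=A/A^{>6}$: a defining system and its total live in degrees $\le 6$, the truncation induces an injection $H^6(A)\hookrightarrow H^6(\bar A)$ identifying the two Massey products, so non-triviality persists, and now the representative $c$ lies in the \emph{top} degree of $\bar A$, so that $c\cdot\bar A^{>0}=0$.

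Next I would build the target-side algebra as the extension $B:=\bar A\oplus\CC u\oplus\CC v$, with $u,v$ in degree $3$, $du=dv=0$, with $\bar A^{>0}$ acting by zero on $u$ and $v$, with $u^2=v^2=0$, and with $uv:=c$ (sign adjusted as needed). Because $c$ is annihilated by $\bar A^{>0}$ this is an honest cdga; the inclusion $\bar A\hookrightarrow B$ is a cdga morphism; and the projection $B\to\bar A$ is a retract of dg-$\bar A$-modules, exhibiting $B$ as an extension of $\bar A$ that is \emph{not} square-zero, since $uv=c\in\bar A$. Over $B$ the product $\langle x_1,x_2,x_3,x_4\rangle$ becomes trivial: in the defining system above, replace the primitives $p$ and $r$ of $x_1x_2$ and $x_3x_4$ by $p+u$ and $r+v$ and leave everything else unchanged. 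As $u,v$ are killed by $\bar A^{>0}$, every term of the defining-system equations involving $u$ or $v$ except $uv$ vanishes (so the remaining members still solve their equations), and the total becomes $c-uv=0$. Hence $0\in\langle x_1,x_2,x_3,x_4\rangle_B$.

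Finally, since $\bar A\to B$ is a cdga morphism admitting a dg-$\bar A$-module retract, \Cref{thmA}(1) provides, for $n$ large (say $n=8$, which also keeps the realizations simply connected), a non-zero degree map $P_n\bar A\to P_nB$ of $n$-dimensional rational Poincar\'e duality cdga's, realized by a non-zero degree map $f\colon Y\to X$ of rational Poincar\'e duality spaces with models $P_n\bar A$ and $P_nB$ respectively. By \Cref{thmA}(3), non-triviality of $\langle x_1,x_2,x_3,x_4\rangle$ on $\bar A$ gives non-triviality on $P_n\bar A$, so $X$ carries a non-trivial fourfold Massey product; and triviality over $B$ gives triviality over $P_nB$, so the pullback of that Massey product along $f$ — which is the fourfold product of the images of the $x_i$ under $\bar A\hookrightarrow B\hookrightarrow P_nB$ — is trivial. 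That produces the required $f\colon Y\to X$.

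The main obstacle is the non-triviality computation for the fourfold Massey product on $A$: it needs an honest description of $H^6(A)$ together with the full indeterminacy of a fourfold product, which is larger and more delicate than in the triple case treated by Taylor. Everything else is made straightforward by one device — pushing the representing cocycle $c$ into the top degree of $\bar A$ before forming $B$, so that $u$ and $v$ meet $\bar A$ only through the single relation $uv=c$; this is exactly what keeps $B$ associative and what prevents a secondary obstruction from surfacing when one contracts the Massey product over $B$. The functoriality of Poincar\'e dualization, and the invariance of Massey (non-)triviality under $(-)\hookrightarrow P_n(-)$, are then supplied by \Cref{thmA}.
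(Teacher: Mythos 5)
Your proposal is correct and follows essentially the same strategy as the paper: adjoin two new closed degree-$3$ classes whose product equals a chosen representative of the quadruple Massey product, shift the two outer primitives by these classes to trivialize the product in $B$, and note that the evident projection is a dg-module retract (failing to be multiplicative exactly on $uv$), so that Poincar\'e dualization and \Cref{prop:PDnotFormal} finish the argument. Your truncated, non-free $B$ is in effect a leaner version of the paper's ad hoc variant in \Cref{alternativeB}, and the non-triviality step you flag as the main obstacle goes through easily for your $A$: the primitives $p,q,r$ are the unique ones, both $d(A^5)$ and the entire indeterminacy lie in $\Lambda^3(x_1,\dots,x_4)$, while every representative of the product has a non-zero $pr$-component (your only slip is the inverted ``respectively'' when matching $Y\to X$ with the models, since the cdga map $P_n\bar A\to P_nB$ realizes $X$ by $P_n\bar A$ and $Y$ by $P_nB$).
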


 Hence the above-mentioned result by Taylor on triple Massey products does not extend to quadruple products. This is furthermore to be contrasted with the main theorem of \cite{MSZ23}, i.e.\ formality being preserved by non-zero degree maps, which in a sense does generalize Taylor's theorem to higher Massey products as long as vanishing of Massey products is understood in a suitably uniform sense. Incidentally, the example constructed for \Cref{thmB} also shows that a cdga may have no rational Massey products, while its Poincar\'e dualization does have nontrivial Massey products, \Cref{homotopicalpropertiesofB}.
 
 With formality being inherited under a non-zero degree map it is natural to ask whether in this case the map itself is formal. Again drawing upon the Poincaré dualization construction and its naturality properties we find that this is not the case, proving:
 
 \begin{thmintro}\label{thmC}(\Cref{nonzerodegnonformal}) A non-zero degree map between formal rational Poincar\'e duality spaces need not be formal. \end{thmintro}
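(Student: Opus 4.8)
The plan is to realise the required map as the Poincar\'e dualization $P_n\phi\colon P_nA\to P_nB$ of a well-chosen non-formal morphism $\phi\colon A\to B$ between \emph{formal} cdga's. If $A$ and $B$ are formal and satisfy the hypotheses of \Cref{thmA} for some $n$, then part~(2) of that theorem makes $P_nA$ and $P_nB$ formal rational Poincar\'e duality spaces; if in addition $\phi$ admits a retract of dg-$A$-modules, then part~(1) makes $P_n\phi$ a non-zero degree map. Moreover, writing $\iota_A\colon A\hookrightarrow P_nA$ and $\pi_A\colon P_nA\to A$ for the inclusion and the cdga retraction of \Cref{thmA}(1) (and likewise for $B$), the explicit square-zero description gives $P_n\phi\circ\iota_A=\iota_B\circ\phi$ and $\pi_B\circ P_n\phi=\phi\circ\pi_A$, so $(\phi\colon A\to B)$ is a retract of $(P_n\phi\colon P_nA\to P_nB)$ in the category of morphisms of cdga's. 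Since formality of a morphism is inherited by retracts --- or, more concretely, since a non-trivial relative Massey product of $\phi$ remains non-trivial after the split inclusion into $P_n\phi$ --- it suffices to exhibit formal cdga's $A,B$ as in \Cref{thmA} together with a morphism $\phi\colon A\to B$ which admits a dg-$A$-module retract and is not formal.

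To build such a $\phi$ one may take $B$ to be a square-zero extension $B=A\oplus M$ of $A$ by a dg-$A$-module $M$, so that $\phi$ is the inclusion and the projection $B\to A$ is automatically a dg-$A$-module retract. By \Cref{thmA}(2) it then remains only to arrange that $A$ and $A\oplus M$ are formal while the inclusion $A\hookrightarrow A\oplus M$ is not. The module $M$ must carry a genuinely twisted differential and $A$-action --- if $A$ and $M$ both carried zero differential, every cdga in sight, and every map between them, would be formal --- and it should be chosen so that $A\hookrightarrow A\oplus M$ supports a non-trivial relative Massey product, i.e.\ a secondary operation linking cohomology classes of $A$ with cohomology classes of $M$, which is incompatible with formality of $\phi$, even though $H^*(A\oplus M)$ is formal (for instance because its cohomology ring is intrinsically formal). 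Since the degrees involved are bounded one is free to take $n$ as large as needed, and $M$ itself is written down by hand, in the spirit of the hand-built examples in the earlier sections. Applying $P_n$ to the resulting $\phi$ then yields a non-zero degree map $P_n\phi\colon P_nA\to P_nB$ between formal rational Poincar\'e duality spaces which is non-formal, since $\phi$ is.

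The main obstacle is the interplay of the two conditions on $M$: formal cdga's are rigid, so ``$A\oplus M$ formal'' and ``$A\hookrightarrow A\oplus M$ non-formal'' pull in opposite directions, and the heart of the argument is to find a dg-$A$-module $M$ --- its underlying complex, differential, and $A$-action --- for which the square-zero extension stays formal while the inclusion nonetheless acquires a non-trivial secondary operation. Once a suitable $M$ is exhibited, checking the dg-$A$-module retract, computing the relative Massey product, and applying \Cref{thmA} are routine.
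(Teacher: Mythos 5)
Your reduction is sound as far as it goes: the strict commutation $P_n\phi\circ\iota_A=\iota_B\circ\phi$ and $\pi_B\circ P_n\phi=\phi\circ\pi_A$ does exhibit $\phi$ as a retract of $P_n\phi$ in $Ho^I$, and a retract argument analogous to the one used for objects (\Cref{formaldualization}) does show that formality of morphisms passes to retracts, so non-formality of $\phi$ would indeed force non-formality of $P_n\phi$. (You should still write out that retract lemma; it is not hard, but it is not in the paper, which instead proves non-formality directly on the dualized map via \Cref{formalmap}\ref{itf: Cinfty}, \Cref{prop:automorphism2ary} and \Cref{lem:homotopy2ary}.) The genuine gap is that the entire content of the theorem --- producing a non-formal morphism between formal cdga's that admits a dg-$A$-module retract, and \emph{proving} it is non-formal --- is deferred to an unconstructed $M$. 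You acknowledge this yourself (``the heart of the argument is to find a dg-$A$-module $M$''), but without the example the proof does not exist: ``formal source, formal target, non-formal map, module retract'' is exactly the delicate conjunction the theorem is about, and it is not clear a priori that it can be satisfied.

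Moreover, your own correct observation that a strict cdga morphism between zero-differential cdga's is automatically formal is precisely why the paper does \emph{not} work with a strict square-zero extension $A\hookrightarrow A\oplus M$. Instead it takes $A=(H(S^2),0)$ and $B=(H(S^2\vee(S^3\times S^4)),0)$ --- both with zero differential, hence formal for free --- and a $C_\infty$-morphism $f$ with non-trivial quadratic component $f_2(\alpha\otimes\alpha)=\beta$ (geometrically the Hopf map), together with an $A_\infty$-bimodule retract, extended to $P_nA\to P_nB$ via \Cref{extendinfty}. Non-formality is then not a ``routine'' Massey-product computation: it is proved by showing (\Cref{prop:automorphism2ary}) that no $A_\infty$-automorphisms of $P_nA$ and $P_nB$ can conjugate away $f_2$, and the $S^4$ factor is inserted specifically to rigidify those automorphisms --- indeed the related map $\Id_{S^2}\vee h\colon S^2\vee S^3\to S^2$ \emph{is} formal, so the obstruction is sensitive to seemingly minor changes in the target. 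Your proposal would need either to carry out an analogous automorphism/obstruction analysis for a concrete $M$, or to pass to genuine models with non-zero differential where a strict morphism can be non-formal; as written, neither is done.
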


 In order to construct the example for the above theorem we extend the Poincar\'e dualization construction to the category of $A_\infty$-algebras (\Cref{inftyPDsection}). We find that the defining formula for the dualization appears naturally for any minimal $C_\infty$-model of a Poincaré duality space provided there are no higher operadic Massey products landing in top degree (\Cref{prop:formulainsimplyPDmodel}). This is reminiscent of how ad hoc Massey products landing in top degree on a Poincar\'e duality cdga vanish. One is led to the notion of cyclic $A_\infty$-algebras (\Cref{simplePDmodels}). Using cyclic models one can relatively quickly recover some well-known results in rational homotopy theory on formality of Poincar\'e duality spaces given some connectivity assumption (\Cref{simplePDmodels}). 
 
 In \Cref{Preliminaries} we review Massey products, recover Taylor's theorem, and illustrate the necessity of Poincar\'e duality therein. Then in \Cref{poincaredualization} we detail the Poincar\'e dualization construction for cdga's, and note that a cdga morphism $A \to B$ extends to a non-zero degree map $P_n A \to P_n B$ when there is a dg-$A$-module retract $B \to A$. In \Cref{counterexample} we construct a cdga morphism $A \to B$ admitting a dg-$A$-module retract such that a non-trivial quadruple Massey product on $A$ (and hence $P_n A$) becomes trivial on $B$, proving \Cref{thmB}. We further study the Massey products on $B$ in \Cref{homotopicalpropertiesofB}; it is a cohomologically simply connected six-dimensional non-formal cdga with no non-trivial Massey products. In \Cref{inftyPDsection} we extend the discussion of \Cref{poincaredualization} to the $A_\infty$ setting. In \Cref{nonformalmapsection} we construct an example showing \Cref{thmC}, and in \Cref{universalpropertyandgeometricinterpretation} we complete the proof of \Cref{thmA} by giving the geometric interpretation of the algebraic dualization construction, extending results of Lambrechts \cite{La00}. In \Cref{simplePDmodels} we discuss cyclic models of Poincar\'e duality spaces and use them to recover results of Miller, Cavalcanti, and  prove cases of a conjecture of Zhou.
 
\subsection*{Acknowledgements} 

The authors would like to thank Michael Albanese, Manuel Amann, Joana Cirici, Vladimir Dotsenko, Pavel H\'ajek, Dieter Kotschick,  John Morgan, Christoforos Neofytidis, Dennis Sullivan, Peter Teichner, Nathalie Wahl, Shmuel Weinberger, and Scott Wilson for illuminating conversations and helpful comments.

Part of this research was supported through a ``Research in Pairs'' program visit by A.M. and J.S. at the Mathematisches Forschungsinstitut Oberwolfach; they thank the MFO for the excellent working conditions provided there. They are likewise grateful to the City University of New York Graduate Center for their hospitality. A.M. would also thank the Institut Mittag-Leffler in Djursholm for its generous hospitality during a visit to the ``Higher algebraic structures in algebra, topology and geometry'' program; he thanks \"Ozg\"ur Bay\i nd\i r, Alexander Berglund, and Andrea Bianchi for interesting discussions there.

J.S. and L.Z. thank the Ludwig-Maximilians-Universit\"at M\"unchen, and A.M. thanks the Max-Planck-Institut f\"ur Mathematik, for their enduring support.

We also thank the referee for a thorough reading and many helpful remarks which improved the paper.

\section{Preliminaries}\label{Preliminaries}

We recall the concepts used in the introduction and throughout, and set notation. We will consider graded commutative algebras $(A,d)$ over $\QQ$ (though much of the purely algebraic discussion goes through for an arbitrary field), where we allow entries in negative degrees, i.e. $A=\bigoplus_{k\in\Z}A^k$, but most of the time we restrict to \emph{cohomologically connected} cdga's, i.e. those where $H^k(A)=0$ for $k<0$ and  $H^0(A)\cong \QQ$. If $A^k=0$ for $k<0$ and $A^0=\QQ$, we say $A$ is connected. We call $A$ a \emph{(rational) Poincar\'e duality cdga} if its cohomology satisfies Poincar\'e duality. That is, there is an index $n$ such that $H^n(A,d) \cong \QQ$ and the pairing $$H^k(A,d)\otimes H^{n-k}(A,d) \to H^n(A,d) \cong \QQ$$ given by $$\alpha \otimes \beta \mapsto \alpha \beta$$ is non-degenerate.

A rational commutative differential graded algebra (cdga) is said to be \emph{formal} if there is a zigzag of quasi-isomorphisms of cdga's

$$
\begin{tikzcd}[row sep=small]
        & {(B_1,d)} \arrow[ld] \arrow[rd] &           & \cdots \arrow[ld] \arrow[rd] &           & {(H,0)} \arrow[ld] \\
{(A,d)} &                                 & {(B_2,d)} &                              & {(B_r,d)} &                   
\end{tikzcd}
$$
connecting $(A,d)$ to a cdga with trivial differential. For $A$ cohomologically connected,  one may pick a Sullivan model $(\Lambda V,d)\to A$, i.e. a connected cdga that is free as an algebra, satisfying a nilpotence condition (c.f. \cite{FHT12}), with a quasi-isomorphism to $A$. One may even pick $(\Lambda V,d)$ to be minimal, i.e. $d(\Lambda V)\subseteq \Lambda^{\geq 2}V$;
\cite[p.191]{FHT12}.  In terms of such a model, formality of $A$ is equivalent to the existence of a quasi-isomorphism $(\Lambda V,d)\to (H(A),0)$. (That is, we may replace the chain of quasi-isomorphisms by a single ``roof''.) 

Computable obstructions to formality are given by (ad hoc) Massey products \cite[Section 2]{M58}. Given three pure-degree classes $[x],[y],[z]\in H(A)$ such that $xy=da$, $yz=db$, the element $az-(-1)^{|x|}xb$ is closed and therefore gives rise to a cohomology class. Modulo the ideal generated by $[x]$ and $[y]$, this class is well-defined and independent of the choices of representatives and primitives. It is called the triple Massey product and denoted $\langle [x],[y],[z] \rangle :=[az-(-1)^{|x|}xb]\in H(A)/([x],[y])$. Equivalently, the triple Massey product is the set of classes $\{[az - (-1)^{|x|}xb]\}$ obtained for all choices of primitives $a,b$. 

Quadruple Massey products are defined similarly: Given four classes $[w],[x],[y],[z]\in H(A)$, which for simplicity we assume to have pure even degree (which will be the case for us below), a defining system for the quadruple product $\langle [w],[x],[y],[z]\rangle\subseteq H(A)$ is a collection of pure-degree elements $a,b,c,f,g$ such that $da=wx$, $db=xy$, $dc=yz$ and $df=ay-wb$ and $dg=bz-xc$. For any such defining system, one obtains a cohomology class $[wg+ac+zf]\in H(A)$. The quadruple Massey product $\langle [w],[x],[y],[z]\rangle\subseteq H(A)$ is then defined to be the collection of classes obtained from all such defining systems. Again, this collection is independent of the chosen representatives for the classes. As in the case of the triple product, the quadruple product is said to be trivial (or vanish) if $0\in\langle [w],[x],[y],[z]\rangle$. The definitions for quintuple and higher products are similar; we refer the reader to \cite{K66}.

Massey products are invariants of the quasi-isomorphism type of a cdga, and on formal cdga's all Massey products vanish.

To a topological space $X$ we can associate its connected cdga $A_{PL}(X)$ of rational piecewise-linear forms \cite{Su77}, \cite{DGMS75}; this cdga computes the rational cohomology of $X$ (see e.g. \cite[Theorem 2.1]{DGMS75}, \cite[Theorem 1.21]{H07}). We say the space $X$ is formal if $A_{PL}(X)$ is formal as a cdga \cite[p.260]{DGMS75}, \cite[Definition 2.1]{H07}. A space is a \emph{rational Poincar\'e duality space} if its rational cohomology satisfies Poincar\'e duality.
 
 Let us give an alternative proof of Taylor's theorem \cite{Ta10} mentioned above.
	
	\begin{prop}\label{prop:Taylor} Let $Y\rightarrow X$ be a non-zero degree map between rational Poincar\'e duality spaces and let $a,b,c\in H(X)$ with $ab=bc=0$. If \[m:=\langle a,b,c\rangle\neq 0\in \frac{H(X)}{a\cup H(X)+H(X)\cup c},\]
		then also \[
		f^*(m)\neq 0\in \frac{H(Y)}{f^*a\cup H(Y)+H(Y)\cup f^*c}.
		\]
	\end{prop}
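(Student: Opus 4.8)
The plan is to reduce the assertion to a statement about the transfer (Umkehr) homomorphism attached to a non-zero degree map, so that nothing about Massey products beyond their naturality is needed. Since $f\colon Y\to X$ has non-zero degree, $X$ and $Y$ are rational Poincar\'e duality spaces of one and the same dimension $n$, equipped with fundamental classes $[X]\in H_n(X;\QQ)$, $[Y]\in H_n(Y;\QQ)$ with respect to which capping gives the Poincar\'e duality isomorphisms. Conjugating $f_*\colon H_*(Y;\QQ)\to H_*(X;\QQ)$ by these produces a degree-preserving Umkehr map $f_!\colon H(Y)\to H(X)$ with two properties: $f_!\circ f^*=\deg(f)\cdot\id_{H(X)}$, coming from $f_*[Y]=\deg(f)\cdot[X]$, and the projection formula $f_!\bigl(f^*(y)\cdot z\bigr)=y\cdot f_!(z)$ for $y\in H(X)$, $z\in H(Y)$, which is a restatement of the cap-product projection formula. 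In particular, because $\deg(f)\ne 0$ and we work over $\QQ$, the ring map $f^*$ is injective.

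Next I would record what this gives for the quotients in the statement. Being a ring homomorphism, $f^*$ carries the ideal $a\cup H(X)+H(X)\cup c$ into $f^*a\cup H(Y)+H(Y)\cup f^*c$, hence descends to a map $\overline{f^*}$ between the two quotients; and $\overline{f^*}(m)$ is genuinely a representative of the pulled-back Massey product $\langle f^*a,f^*b,f^*c\rangle$, since choosing cochain representatives and primitives $\alpha,\beta$ with $d\alpha=ab$, $d\beta=bc$ in a cdga model of $X$, the pullbacks $f^*\alpha,f^*\beta$ form a defining system over $Y$ with associated class $f^*(m)$. So the conclusion of the proposition is exactly the injectivity of $\overline{f^*}$.

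To establish that injectivity, suppose $\overline{f^*}(m)=0$, i.e.\ $f^*(m)=f^*(a)\cdot u+v\cdot f^*(c)$ for some $u,v\in H(Y)$. Applying $f_!$, the left-hand side becomes $f_!f^*(m)=\deg(f)\cdot m$, while the projection formula turns the right-hand side into $a\cdot f_!(u)+f_!(v)\cdot c$ (graded commutativity contributes signs, irrelevant for membership in the ideal); hence $\deg(f)\cdot m\in a\cup H(X)+H(X)\cup c$. Dividing by $\deg(f)\in\QQ^{\times}$ gives $m=0$ in the quotient, contradicting the hypothesis. Thus $\overline{f^*}(m)\ne 0$, which is the claim.

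The only step requiring care is the first one: producing $f_!$ and verifying its two properties in the generality of rational Poincar\'e duality spaces rather than closed oriented manifolds, which needs the fundamental classes in rational homology and the cap-product projection formula — this is classical. Everything after that is a formal manipulation of the two ideals, and it crucially uses that we may divide by $\deg(f)$ since the coefficients are $\QQ$.
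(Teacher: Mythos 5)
Your proof is correct and rests on exactly the same two ingredients as the paper's: the normalized transfer $f_!/\deg(f)$ as a one-sided inverse to $f^*$, and the projection formula making it a map of $H(X)$-modules. The paper packages these into a direct-sum splitting of the quotient $H(Y)/(f^*a\cup H(Y)+H(Y)\cup f^*c)$, whereas you apply $f_!$ directly to a hypothetical relation witnessing triviality and derive a contradiction; the two arguments are equivalent.
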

	
	\begin{proof}
		The map $f^*: H(X)\to H(Y)$ has a one-sided inverse given by $f_*':=\frac{1}{\deg f} f_*$, i.e. $f_*'f^*=Id_{H(X)}$. Here $f_*$ denotes the pushforward, determined by $f^*$ and Poincar\'e duality. This yields a splitting
		\[
		H(Y)\overset{(f_*',\operatorname{pr})}{\longrightarrow} H(X)\oplus H(Y)/f^*H(X).
		\]
		By the projection formula
		\[
		f_*(f^*x\cup y)=x\cup f_*y \quad\text{for }x\in H(X), y\in H(Y),\]
		this (additive) splitting is compatible with the natural $H(X)$-module structures on both sides (given by $f^*$ on the left and $(Id, f^*)$ on the right). Therefore, writing $K:=H(Y)/f^*H(X)$, the domain of definition of the Massey product decomposes as
		
		\[
		\frac{H(Y)}{f^*a\cup H(Y)+H(Y)\cup f^*c}\overset{\sim}{\longrightarrow}\frac{H(X)}{a\cup H(X)+H(X)\cup c}\oplus\frac{K}{f^*a \cup K+K\cup f^*c},
		\]
		and under this splitting, we have $f^*(m)=(m,0)$.
	\end{proof}
	
	\begin{rem} The above proof, like Taylor's, works for maps of Poincar\'e duality cdga's over any field, as long as we interpret $\deg f\neq 0$ to mean that $\deg f$ is invertible. 
\end{rem}

\begin{rem}
    One can weaken the hypothesis and only require $X$ to satisfy rational Poincare duality and the map $f^ *:H(X)\to H(Y)$ to be injective (which is equivalent to $f^*$ being nonzero in the top cohomology of $X$). In fact, let $\int_X\in \hom(H(X),\Q)=:H(X)^\vee$ be the element corresponding to the isomorphism $H^n(X)\cong \Q$ in degree $n$ and zero in all other degrees.  Let $\int_Y\in \hom(H(Y),\Q)$ be a preimage of $\int_X$ under $(f^*)^\vee$. Now, define $f_*':H(Y)\to H(X)^\vee\cong H(X)$ by $\beta\mapsto (\alpha\mapsto \int_Yf^ *\alpha\beta)$, followed by Poincar\'e duality on $X$. The above proof still applies.
\end{rem}
	
	\begin{ex}\label{noPD} One cannot drop the Poincar\'e duality assumption on $X$. In fact, it is easy to find examples of cohomologically injective maps of cdga's such that a non-vanishing triple Massey product in the domain vanishes in the target. For example, consider the inclusion of cdga's $$A:=\left( \Lambda(x,y,z), dz = xy \right)\hookrightarrow B:=\left( \Lambda(x,y,z,u,v), dz = xy, dv = xz-yu \right),$$ where all generators are in degree 1. This induces an inclusion $A':=A/A^{\geq 3}\hookrightarrow B':=B/B^{\geq 3}$ which is injective on cohomology. Now, $\langle x, x, y \rangle$ is a non-vanishing triple product in $A'$, while in $B'$ it is represented by $[xz] = [yu]$, which lies in the indeterminacy.\end{ex}

\begin{ex}\label{noPDformal} Continuing along the lines of \Cref{noPD}, we give an example of a non-formal cdga with a map to a formal cdga which is injective on cohomology. Namely, take $$\left( \Lambda(X_2, Y_2, a_3, b_3, c_3), dX=dY = 0, da = X^2, db=XY, dc=Y^2 \right)$$ and $$\left( \Lambda(x_2, y_2, \alpha_3, \beta_3, \gamma_3)/(x^2, xy, y^2), d \equiv 0 \right),$$ where the indices on generators denote degrees. The map sending $X \mapsto x, Y \mapsto y, a \mapsto \alpha, b \mapsto \beta, c \mapsto \gamma$ descends to the truncation of both cdga's whereby we mod out the (differential) ideals of all elements of degree $\geq 6$. The resulting map is a cohomologically injective map from a non-formal cdga to a formal one; indeed, the domain carries the non-trivial triple Massey products $\langle [X], [X], [Y] \rangle$ and $\langle [X], [Y], [Y] \rangle$. \end{ex}

\section{Poincar\'e dualization}\label{poincaredualization}

We detail a construction that ``completes'' any cohomologically connected cdga to one satisfying Poincar\'e duality on its rational cohomology, which in certain cases is functorial. This construction is rather simple and has appeared before, see \cite{La00}, along with \cite[Proposition 14ff.]{KTV21}, \cite{LeV22} for a more recent context. Here we study it in detail within the context of rational homotopy theory.

Fix a natural number $n$. Let $(A,d)$ be a complex of rational vector spaces. We define the ($n$-th) \emph{dual complex} $D_nA$ by $(D_nA)^k:=(A^{n-k})^\vee$ with differential $(D_nA)^k \to (D_nA)^{k+1}$ given on pure-degree elements $\varphi\in D_n A$ by $d(\varphi)(a):=(-1)^{|\varphi|-1}\varphi(da)$ for any $a\in A$. Clearly, $D_n$ is a contravariant functor (given by $\left((D_n r)(\varphi)\right) (b) = \varphi(r(b))$ for a map of complexes $B \xrightarrow{r} A$) and
\begin{equation*}
    H^k(D_nA)=(H^{n-k}(A))^\vee.
\end{equation*}

We will be interested in the case where $A$ carries in addition the structure of a graded-commutative algebra, such that $d$ is a derivation, i.e. $(A,\wedge, d)$ is a cdga. It will not be necessary to assume in this definition that $A$ has only elements in non-negative degrees, and in fact this will in general not be the case for $P_n A$ defined below (but we will only care about the case where at least the cohomology of $A$ is concentrated in non-negative degrees). 

\begin{defi}
Let $A$ be a cdga. The $n$-th \textbf{Poincar\'e dualization} of $A$ is given, as a complex, by
\[
P_nA:= A\oplus D_nA,
\]
with multiplication (extending that on $A$) defined on pure-degree elements $a\in A$, $\varphi\in D_nA$ by the dual complex element given by
\begin{align*}
(a\wedge\varphi)(b)&:=(-1)^{|a||\varphi|}\varphi(a\wedge b),\\
(\varphi\wedge a)(b)&:=\varphi(a\wedge b),
\end{align*}
and setting $\varphi\wedge\psi=0$ for $\varphi,\psi\in D_n A$. 
\end{defi}

\begin{lem}
For a cdga $A$, the Poincar\'e dualization $P_nA$ is a cdga. 
\end{lem}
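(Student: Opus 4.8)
The plan is to verify the cdga axioms for $P_nA = A\oplus D_nA$ one at a time, exploiting the fact that the multiplication is only ``new'' on the mixed pieces $A\otimes D_nA$ and $D_nA\otimes A$ and vanishes on $D_nA\otimes D_nA$, so that every computation reduces quickly to the corresponding property of $A$ together with the defining property of $D_nA$ as a complex. First I would record that the products are correctly graded: for $a\in A^i$ and $\varphi\in (D_nA)^j = (A^{n-j})^\vee$ one has $a\wedge\varphi,\ \varphi\wedge a\in (A^{n-i-j})^\vee = (D_nA)^{i+j}$, and that $1\in A^0$ remains a two-sided unit, since $(1\wedge\varphi)(b) = \varphi(b) = (\varphi\wedge 1)(b)$. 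The differential on $P_nA$ squares to zero because it does so on each summand; for $D_nA$ this was already implicit when recording $H^k(D_nA) = (H^{n-k}(A))^\vee$.

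Next, graded-commutativity of the algebra: comparing $(a\wedge\varphi)(b) = (-1)^{|a||\varphi|}\varphi(a\wedge b)$ with $(\varphi\wedge a)(b) = \varphi(a\wedge b)$ gives $a\wedge\varphi = (-1)^{|a||\varphi|}\varphi\wedge a$ on the nose, and $\varphi\wedge\psi = 0 = (-1)^{|\varphi||\psi|}\psi\wedge\varphi$ trivially; together with graded-commutativity of $A$ this handles the commutativity axiom. For associativity, products involving two or more factors from $D_nA$ vanish on both sides, so only the three mixed triple products $(a_1 a_2)\wedge\varphi$ versus $a_1\wedge(a_2\wedge\varphi)$, then $(a_1\wedge\varphi)\wedge a_2$ versus $a_1\wedge(\varphi\wedge a_2)$, and $(\varphi\wedge a_1)\wedge a_2$ versus $\varphi\wedge(a_1\wedge a_2)$ require attention. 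In each case, unwinding the definitions turns the identity into the associativity (and, in the middle case, also the graded-commutativity) of $A$, once the accumulated Koszul signs are matched up.

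Finally, the Leibniz rule $d(x\wedge y) = dx\wedge y + (-1)^{|x|}x\wedge dy$ holds on $A$ by hypothesis and is vacuous when $x,y\in D_nA$ (both sides vanish), so it remains to treat $x\in A,\ y\in D_nA$ and $x\in D_nA,\ y\in A$. Expanding $(d(a\wedge\varphi))(b)$ using the formula $(d\psi)(c) = (-1)^{|\psi|-1}\psi(dc)$ for the differential on $D_nA$ together with the Leibniz rule on $A$, and expanding the right-hand side the same way, one finds the ``cross terms'' cancel in pairs because $(-1)^k + (-1)^{k-1} = 0$, leaving exactly the desired identity; the case $x\in D_nA,\ y\in A$ is analogous.

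I expect the only real obstacle to be keeping the sign conventions consistent: the differential on $D_nA$ carries the twist $(-1)^{|\varphi|-1}$, the two module multiplications carry their own Koszul signs, and in the Leibniz computation these combine with the $(-1)^{|x|}$ in the derivation rule, so one must be disciplined about which grading each sign refers to. Conceptually, however, there is nothing to prove: $D_nA$ is the shifted linear dual of $A$, which is naturally a symmetric dg-$A$-bimodule via $(\varphi\wedge a)(b) := \varphi(a\wedge b)$, and $P_nA$ is the associated trivial (``square-zero'') extension, which is a cdga for any such bimodule. The direct verification sketched above simply makes the required signs explicit, in keeping with the down-to-earth description we are after.
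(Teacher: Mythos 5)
Your proposal is correct and follows essentially the same route as the paper: direct verification of graded commutativity, associativity, and the Leibniz rule, with everything reducing to the mixed terms $A\otimes D_nA$ since products of two dual elements vanish. The only difference is cosmetic — the paper checks a single associativity configuration $(\varphi\wedge a)\wedge b$ (the others following from graded commutativity) and a single Leibniz case, whereas you enumerate all mixed cases; your closing remark that $P_nA$ is the square-zero extension of $A$ by the symmetric dg-bimodule $D_nA$ is exactly the conceptual point the paper makes implicitly.
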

\begin{proof}
Graded commutativity of the multiplication holds by definition. For associativity, we only need to check the case $\varphi\in D_n A$ and $a,b \in A$ as all other combinations of products of three elements are either zero or entirely in $A$, where associativity holds since $A$ is a cdga. We compute, for $c \in A$:
\begin{align*}
    ((\varphi\wedge a)\wedge b)(c)&=(\varphi\wedge a)(b\wedge c)\\
    &=\varphi(a\wedge b\wedge c)\\
    &=(\varphi\wedge(a\wedge b))(c).
\end{align*}

That $d$ is a derivation again only has to be checked on products of the form $\varphi\wedge a$ with $\varphi\in D_n A$ and $a\in A$. In this case, we compute:
\begin{align*}
    (d\varphi\wedge a)(b)&=d\varphi(a\wedge b)\\
    &=(-1)^{|\varphi|-1}\varphi(d(a\wedge b))\\
    &=(-1)^{|\varphi|-1}\varphi(da\wedge b)+(-1)^{|\varphi|-1+|a|}\varphi(a\wedge db)\\
    &=(-1)^{|\varphi|-1}(\varphi\wedge da)(b) + (-1)^{|\varphi\wedge a|-1}(\varphi\wedge a)(db)\\
    &= (-1)^{|\varphi|-1}(\varphi\wedge da)(b)+d(\varphi\wedge a)(b). \qedhere
\end{align*}
\end{proof}


\begin{lem}\label{lem: nondegenerate}
Let $A$ be a cohomologically connected cdga such that $H(A)$ is finite dimensional and $H^k(A)=0$ for $k\geq n$. The cohomology $H(P_nA)$ is finite dimensional and concentrated in degrees $0,\ldots ,n$. Further, $P_nA$ is a Poincar\'e duality cdga, i.e. for any integer $k$, the pairing
\[
H^k(P_n A)\times H^{n-k}(P_nA)\overset{\wedge}{\longrightarrow} H^n(P_nA)\cong \QQ 
\]
is non-degenerate.
\end{lem}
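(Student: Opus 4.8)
The plan is to exploit the fact that, as a cochain complex, $P_nA$ is simply the direct sum $A\oplus D_nA$: by construction the differential on $P_nA$ restricts to $d$ on $A$ and to the dual differential on $D_nA$, and it does not mix the two summands. Hence
\[
H^k(P_nA)\ \cong\ H^k(A)\oplus H^k(D_nA)\ \cong\ H^k(A)\oplus\big(H^{n-k}(A)\big)^\vee,
\]
using the identification $H^k(D_nA)=(H^{n-k}(A))^\vee$ recorded above. The finiteness and concentration statements are then immediate from the hypotheses on $A$: $H^k(A)=0$ unless $0\le k\le n-1$, and $(H^{n-k}(A))^\vee=0$ unless $1\le k\le n$, so $H^k(P_nA)=0$ outside $0\le k\le n$, the total dimension is finite, and moreover $H^0(P_nA)\cong H^0(A)\cong\QQ$ while $H^n(P_nA)\cong(H^0(A))^\vee\cong\QQ$, the latter isomorphism being given by evaluation at $1\in A^0$.

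For the duality statement I would next write the cup product explicitly in these coordinates. Represent a class in $H^k(P_nA)$ by a pair $(\alpha,\varphi)$ with $\alpha\in H^k(A)$ and $\varphi\in(H^{n-k}(A))^\vee$, and a class in $H^{n-k}(P_nA)$ by $(\beta,\psi)$ with $\beta\in H^{n-k}(A)$ and $\psi\in(H^k(A))^\vee$. Their product lands in $H^n(P_nA)$; its $A$-component is the class of $\alpha\wedge\beta$, which is zero because $H^n(A)=0$, and its $D_nA$-component is $\alpha\wedge\psi+\varphi\wedge\beta$, since $\varphi\wedge\psi=0$ by definition of the multiplication on $P_nA$. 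Unwinding the formulas for $a\wedge\varphi$ and $\varphi\wedge a$ and evaluating at $1\in A^0$, the induced bilinear form $H^k(P_nA)\times H^{n-k}(P_nA)\to H^n(P_nA)\cong\QQ$ is
\[
\big((\alpha,\varphi),(\beta,\psi)\big)\ \longmapsto\ (-1)^{k(n-k)}\,\psi(\alpha)+\varphi(\beta).
\]

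It then remains only to observe that this is, up to a sign on one block, the canonical duality pairing between $H^k(A)\oplus(H^{n-k}(A))^\vee$ and $H^{n-k}(A)\oplus(H^k(A))^\vee$: in block form its Gram matrix is block off-diagonal, with zero diagonal blocks and with the two off-diagonal blocks being the evaluation pairings $H^k(A)\otimes(H^k(A))^\vee\to\QQ$ and $(H^{n-k}(A))^\vee\otimes H^{n-k}(A)\to\QQ$, which are perfect since $H(A)$ is finite dimensional. (The diagonal blocks vanish: one because cup products into $H^n(A)$ vanish, the other because $D_nA$ squares to zero.) A block off-diagonal matrix with invertible off-diagonal blocks is invertible, so the pairing is non-degenerate for every $k$, as claimed.

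The only point requiring a little care is the bookkeeping that the product formulas, written a priori on cochains, descend to cohomology and there agree with the stated form with the correct sign; but since the complex splits and the only surviving contributions to the top-degree product are $\alpha\wedge\psi$ and $\varphi\wedge\beta$, this is a routine verification and there is no genuine obstacle — the entire content of the lemma is the recognition of the resulting hyperbolic form.
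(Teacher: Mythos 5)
Your proposal is correct and follows essentially the same route as the paper's (much terser) proof: split the cohomology as $H^k(A)\oplus (H^{n-k}(A))^\vee$ and observe that the pairing is, up to sign, the evaluation pairing, hence non-degenerate. Your extra bookkeeping (the explicit sign $(-1)^{k(n-k)}$ and the block off-diagonal Gram matrix) is a faithful expansion of the paper's phrase ``the pairing is given (up to a non-zero scalar) by evaluation.''
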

\begin{proof}
By construction, $H^k(P_nA)=H^k(A) \oplus H^k(D_n A) \cong H^k(A) \oplus (H^{n-k}(A))^\vee$ and $H^{n-k}(P_nA)= H^{n-k}(A)\oplus H^{n-k}(D_nA)\cong H^{n-k}(A) \oplus (H^k(A))^\vee$,
and the pairing is given (up to a non-zero scalar) by evaluation. \qedhere
\end{proof}

\begin{ex}\label{examples} \,
\begin{itemize}
\item Let $A=\left( \Lambda(x)/x^2, d = 0 \right)$ with $|x|\geq 1$ and let $n>|x|$. Then \[P_n(A)\cong \left( \Lambda(x,y)/(x^2,y^2), d = 0 \right)\] with $|y|=n-|x|$, i.e. we obtain the cohomology algebra of the product of spheres $S^{|x|}\times S^{|y|}$.

\item Take $A$ to be a minimal model for $S^2$, i.e. $A = \left( \Lambda(x, y), dy = x^2 \right)$, which as a vector space has the basis $\{x^k, x^ky\}_{k\geq 0}$. Then $P_n(A)$ has a vector space basis given by the union of $\{x^k, x^ky\}$ and a dual basis $\{\widehat{x^k}, \widehat{x^k y}\}_{k\geq 0}$, the elements of which live in degrees $n - 2k$ and $n - 2k - 3$ respectively. These satisfy the following relations $$\widehat{x^k y} \wedge x^l y = \widehat{x^{k-l}}, \ \  \widehat{x^k y} \wedge x^l = \widehat{x^{k-l}y}, \ \  \widehat{x^k} \wedge x^l y = 0, \ \ \widehat{x^k} \wedge x^l = \widehat{x^{k-l}}$$ for $k\geq l$. All other products of dual elements with basis elements from $A$ are zero. The differential is determined by $d(\widehat{x})=d(\widehat{y})=0$ and for $k\geq 2$, $d(\widehat{x^k}) = - \widehat{x^{k-2}y}$, $d(\widehat{x^k y}) = 0$. One may check that this is connected to the first example (with $|x|=2$) by a chain of quasi-isomorphisms.
\item For a non-manifold example, consider the formal space $S^2 \vee S^3$, with model $$\left( \Lambda(x, y)/(x^2, xy), d = 0\right)$$ with $|x|=2$, $|y|=3$. Then for $n\geq 4$, its $n$-th Poincar\'e dualization is the cohomology ring of the manifold $\left( S^2 \times S^{n-2} \right) \# \left( S^3 \times S^{n-3} \right) $ equipped with trivial differential. Note that for large enough $n$, the boundary of a thickening of $S^2 \vee S^3$ in $\mathbb{R}^{n+1}$ is $\left( S^2 \times S^{n-2} \right) \# \left( S^3 \times S^{n-3} \right) $. \\ \end{itemize}

\end{ex}

\begin{rem}\label{geometric} We easily see that the Poincar\'e dualization of a Poincar\'e duality cdga is obtained by tensoring with $\QQ[x]/(x^2)$, with $x$ of the appropriate degree. Geometrically, this corresponds to crossing with a sphere (at least when $n$ is greater than the cohomological dimension of the original algebra). We discuss in \Cref{universalpropertyandgeometricinterpretation} how Poincar\'e dualization for a general cohomologically finite type cdga, at least for large enough $n$, corresponds to taking the double of a thickening of a corresponding cell complex embedded in Euclidean space (cf. the third example above).

 \end{rem}

The Poincar\'e duality cdga's that one can get via the Poincar\'e dualization construction are quite restricted. As a simple example, notice that $(\QQ[x]/(x^k), d=0)$, with $\deg(x) = 2$ and $k\geq 2$  (corresponding to $\mathbb{CP}^{\geq 2}$) cannot be obtained by Poincar\'e dualizing some cdga. The case of even $k$ also follows from the following:

\begin{prop} Let $(A,d)$ be a cdga as in \Cref{lem: nondegenerate}. For $n \equiv 0 \bmod 4$, the middle degree pairing $$H^{n/2}(P_n(A)) \otimes H^{n/2}(P_n(A)) \to \QQ$$ given by $\alpha \otimes \beta \mapsto c$, where $\alpha \beta = c \, \hat{1}$, is a sum of hyperbolic forms. In particular, the signature of this pairing is zero.
\end{prop}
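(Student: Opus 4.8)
The plan is to compute the middle-degree intersection form of $P_nA$ directly in terms of the decomposition $P_nA = A\oplus D_nA$ and to observe that $H^{n/2}(A)$ sits inside $H^{n/2}(P_nA)$ as a Lagrangian subspace. Write $W := H^{n/2}(P_nA)$ and $V := H^{n/2}(A)$. Since $H^k(P_nA) = H^k(A)\oplus (H^{n-k}(A))^\vee$, we get $W \cong V\oplus V^\vee$; moreover, because $A$ is cohomologically concentrated in degrees $<n$, we have $H^n(A)=0$, so $H^n(P_nA)\cong (H^0(A))^\vee$ is one-dimensional, spanned by the functional $\widehat{1}$ dual to $[1]\in H^0(A)$. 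Note also that the pairing in question is symmetric: as $n\equiv 0\bmod 4$, the integer $n/2$ is even, so $\alpha\beta = (-1)^{(n/2)^2}\beta\alpha = \beta\alpha$, and hence it makes sense to speak of its signature.

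Next I would compute the three blocks of the bilinear form $q\colon W\otimes W\to H^n(P_nA)\cong\QQ$ with respect to $W = V\oplus V^\vee$. For $\alpha,\beta\in V$, the product $\alpha\beta$ is computed inside the subalgebra $A\subseteq P_nA$, hence represents a class in $H^n(A)=0$; thus $V$ is isotropic for $q$. For $\varphi,\psi\in V^\vee\cong H^{n/2}(D_nA)$, one has $\varphi\wedge\psi=0$ by the definition of the multiplication on $P_nA$, so this block vanishes as well. Finally, for a cocycle $a\in A^{n/2}$ representing a class in $V$ and a cocycle $\psi\in (A^{n/2})^\vee$ representing a class in $V^\vee$, the product formula gives $(a\wedge\psi)(1) = (-1)^{|a||\psi|}\psi(a) = \psi(a)$, the sign being trivial precisely because $|a|=|\psi|=n/2$ is even; passing to cohomology, the $V\otimes V^\vee$ block of $q$ is the canonical (non-degenerate) evaluation pairing $V\otimes V^\vee\to\QQ$. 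This is the only place where the hypothesis $n\equiv 0\bmod 4$, as opposed to merely $n$ even, enters.

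It follows that in a basis of $V$ together with the dual basis of $V^\vee$, the Gram matrix of $q$ has the block form $\left(\begin{smallmatrix} 0 & I\\ I & 0\end{smallmatrix}\right)$ with $I$ the identity of size $\dim_\QQ V$; that is, $q$ is the orthogonal direct sum of $\dim_\QQ V$ copies of the hyperbolic plane $\left(\begin{smallmatrix}0&1\\1&0\end{smallmatrix}\right)$, and since the latter has signature $0$, so does $q$. There is no real obstacle here beyond the sign bookkeeping in the product formulas defining $P_nA$, which is routine; the conceptual content is simply that $H^{n/2}(A)$ is a Lagrangian subspace of the middle cohomology of its Poincaré dualization, and a non-degenerate symmetric form admitting a Lagrangian subspace is a sum of hyperbolic forms.
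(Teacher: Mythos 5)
Your proposal is correct and follows essentially the same route as the paper: decompose $H^{n/2}(P_nA)$ as $H^{n/2}(A)\oplus H^{n/2}(A)^\vee$ and observe that in a basis together with its dual basis the Gram matrix is a direct sum of hyperbolic planes. You simply spell out the block computation (both summands isotropic, cross-term the evaluation pairing) that the paper leaves implicit.
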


\begin{proof} 

This follows from the equality $H^{n/2}(P_nA)=H^{n/2}(A)\oplus H^{n/2}(A)^\vee$. We can pick any basis for $H^{n/2}(A)$ and complete it with the dual basis to one for $H^{n/2}(P_nA)$. In this basis, the assertion is clear. \end{proof}


\begin{cor}\label{realization} If $(A,d)$ as in \Cref{lem: nondegenerate} is cohomologically simply connected, and $n$ (not necessarily divisible by four) is at least by two larger than the cohomological dimension of $A$, then $P_n(A)$ is realized by a simply connected closed smooth manifold $M$. That is, $A_{PL}(M)$ is connected by a chain of quasi-isomorphisms to $P_n(A)$. \end{cor}

As indicated in \Cref{geometric}, in \Cref{universalpropertyandgeometricinterpretation} we show that this manifold, for large enough $n$, can be taken to be the double of a thickening of a representing cell complex.

\begin{proof} By the above, if $n\equiv 0 \bmod 4$, the signature of $P_n(A)$ (with respect to any choice of generator for top degree rational homology) is zero. Notice also that respect to the dual of $\hat{1}$, the pairing is equivalent over the rationals to one of the form $\sum_{i=1}^s x_i^2 - \hat{x_i}^2$ (indeed, a rational change of basis takes the form $\left( \begin{smallmatrix} 0 & 1 \\ 1 & 0 \end{smallmatrix} \right)$ into $\left( \begin{smallmatrix} 1 & 0 \\ 0 & -1 \end{smallmatrix} \right)$). Hence by \cite[Theorem 13.2]{Su77}, for $n\geq 5$ (regardless of whether $n$ is divisible by four), choosing all rational Pontryagin classes to be trivial, there is a closed smooth manifold realizing this data. The only remaining non-trivial case is $n=4$, in which case our requirements force $A$ to have the cohomology ring of a wedge sum $\vee_\ell S^2$, and so $P_4(A)$ has the cohomology ring of the connected sum $\#_\ell (S^2 \times S^2)$, which is intrinsically formal, so it realizes $P_4(A)$.
\end{proof}



From now on, $A$, $B$ will denote cdga's satisfying the same finiteness and connectedness conditions as in \Cref{lem: nondegenerate}. 

For a map of cdga's $f:A\to B$ it is in general not true that it can be extended to a map $P_n A\to P_n B$. However, one has:
\begin{lem}\label{lem:naturality}
Given a map $r:B\to A$ of dg-$A$-modules, i.e.\ a map of complexes satisfying $r(f(a)\wedge b)=a\wedge r(b)$, with dual map $D_n r$, the map
\[
f\oplus D_n r: P_n A\to P_n B
\]
is a map of cdga's.  When $r(1) \neq 0$ (equivalently $r\circ f$ is a non-zero multiple of the identity), the map $f\oplus D_n r$ has non-zero degree.
\end{lem}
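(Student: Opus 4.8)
The plan is to check directly that $\Phi:=f\oplus D_nr$ is a unital morphism of cdga's, and then to read off its degree from the induced map on top cohomology. That $\Phi$ is a morphism of complexes is automatic: $f$ is a chain map by hypothesis, and $D_nr$ is the contravariant functor $D_n$ applied to the chain map $r$, hence also a chain map, so the direct sum $\Phi$ is one; and $\Phi$ is unital since $\Phi(1)=f(1)=1$.

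The content is in multiplicativity, $\Phi(u\wedge v)=\Phi(u)\wedge\Phi(v)$, which I would verify by cases on the types of $u$ and $v$. For $u,v\in A$ this is just multiplicativity of $f$. For $u,v\in D_nA$ both sides vanish, since products of two elements of $D_nA$ (resp.\ $D_nB$) are zero by definition. The only substantive case is $u=\varphi\in D_nA$, $v=a\in A$; the case $u=a$, $v=\varphi$ is entirely analogous, with the Koszul sign $(-1)^{|a||\varphi|}$ appearing on both sides and cancelling. For $b\in B$, unwinding the definitions gives
\[
\bigl(D_nr(\varphi\wedge a)\bigr)(b)=(\varphi\wedge a)(r(b))=\varphi(a\wedge r(b)),
\]
whereas
\[
\bigl(D_nr(\varphi)\wedge f(a)\bigr)(b)=\bigl(D_nr(\varphi)\bigr)(f(a)\wedge b)=\varphi(r(f(a)\wedge b))=\varphi(a\wedge r(b)),
\]
the last equality being exactly the dg-$A$-module identity $r(f(a)\wedge b)=a\wedge r(b)$. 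Hence $\Phi$ is a morphism of cdga's.

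For the degree statement I would invoke the computation of $H(P_nA)$ established above: under the standing hypotheses ($A$ cohomologically connected, $H(A)$ finite-dimensional and concentrated in degrees $<n$) one has $H^n(A)=0$, so $H^n(P_nA)=(H^0A)^\vee\cong\QQ$, with canonical generator $\mu_A$ the functional dual to the basis vector $[1]$ of $H^0A$; likewise $\mu_B$ for $B$. Since $\Phi$ restricts to $D_nr$ on $D_nA$, the induced map $H^n(\Phi)\colon H^n(P_nA)\to H^n(P_nB)$ is the linear dual of $H^0(r)\colon H^0B\to H^0A$. Putting $b=1$ in the module identity gives $r(f(a))=a\cdot r(1)$, so $r\circ f$ is multiplication by the $0$-cocycle $r(1)\in A^0$; writing its class as $[r(1)]=\lambda[1]\in H^0A\cong\QQ$, dualization gives $H^n(\Phi)(\mu_A)=\lambda\,\mu_B$, i.e.\ $\deg\Phi=\lambda$. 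This is non-zero exactly when $[r(1)]\neq0$, equivalently when $r\circ f=\lambda\cdot\id_A$ is a non-zero multiple of the identity, and in particular whenever $r$ is an honest retraction $r\circ f=\id_A$.

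I do not expect a genuine obstacle. The one step requiring care is the mixed-product computation, where the dg-$A$-module axiom for $r$ is precisely the hypothesis that makes multiplicativity work; everything else is bookkeeping — keeping the Koszul sign straight in the $a\wedge\varphi$ case, and noting that in the situation at hand the difference between $r(1)$ being non-zero as an element of $A^0$ and as a class in $H^0(A)\cong\QQ$ is immaterial, so that $r(1)\neq0$ does force $\deg\Phi\neq0$.
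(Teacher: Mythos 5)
Your proposal is correct and follows essentially the same route as the paper's proof: the chain-map and unitality checks, the case analysis for multiplicativity with the mixed product $\varphi\wedge a$ reduced to the dg-$A$-module identity $r(f(a)\wedge b)=a\wedge r(b)$, and the degree read off from the fact that $H^n(P_nA)\cong(H^0A)^\vee$ is generated by the class dual to $[1]$, with $D_nr$ inducing the dual of $H^0(r)$. The paper's version of the degree argument is just a one-line remark, so your more explicit bookkeeping is a faithful elaboration rather than a different method.
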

\begin{proof}
Because both $f: A\to B$ and $D_n r: D_n A \to D_n B$ are maps of complexes, so is $f\oplus D_n r$, where $f$ and $D_n r$ are extended trivially to all of $P_n A$. It thus remains to show that the map is compatible with the product. If both factors are in $A\subseteq P_n A$, this is true since $f$ is an algebra map. If both entries are in $D_n A$, their product is zero, and so is the product of their images under $D_n r$. The remaining case, $a\in A, \varphi\in D_n A$, follows from: 
\begin{align*}
    \Bigl( (f\oplus D_nr)(\varphi\wedge a) \Bigr) (b)&=\Bigl( D_n r(\varphi\wedge a) \Bigr) (b)\\
    &=(\varphi\wedge a)(r(b))\\
    &=\varphi(a\wedge r (b))\\
    &=\varphi(r(f(a)\wedge b))\\
    &= \left(D_nr(\varphi) \right) (f(a)\wedge b)\\
    &=\Bigl( D_nr(\varphi)\wedge f(a) \Bigr) (b)\\
    &=\Bigl( (f\oplus D_n r)(\varphi)\wedge(f\oplus D_n r)(a)\Bigr) (b).
\end{align*}
The statement about the degree follows since the top-degree cohomology in $P_n A$ and $P_n B$ is generated by any class that evaluates non-trivially on $1$.
\end{proof}
\begin{rem}
Consider the category whose objects are cdga's satisfying the conditions of $A$ as in \Cref{lem: nondegenerate} and whose morphisms $A\to B$ are pairs $(f,r)$ as in \Cref{lem:naturality}, with composition $(g,s)\circ (f,r)=(g\circ f, r\circ s)$. Then Poincar\'e dualization $P_n$ defines a functor from this category to that of Poincar\'e duality cdga's. Restricted to degree-wise finite dimensional cdga's, it is fully faithful.
\end{rem}

\begin{prop}\label{prop:PDnotFormal}
Let $(A,d)$ be a cdga, and let $i:A\to P_n A$ be the inclusion. If the Massey product $\langle x_1,...,x_m\rangle$ is non-zero, where $x_i \in H(A)$, then so is $\langle i(x_1),...,i(x_m)\rangle$.
\end{prop}

\begin{proof}
By construction, $P_nA=A\oplus D_n A$ with $A$ a subalgebra and $D_nA$ a differential ideal. Thus, the inclusion of cdga's $i:A\to P_nA$ admits a one-sided inverse map of cdga's  $r:P_n A\to A$ with $r\circ i=\id$. Now for any non-trivial Massey product $m\in H(A)$, $i(m)$ is non-trivial as $(r\circ i) (m)\subseteq m$. Recall, we treat a Massey product as the set of cohomology classes obtained via any possible defining system (see \cite{K66}), with the Massey product being trivial if the zero class is contained in this set.
\end{proof}

\begin{rem}\label{trivialMassey} Since we will implicitly use it in the next section, we remark on the following obvious property: if a Massey product $m$ is defined and trivial in $A$, then the same holds in $P_n(A)$ for any $n$, as $A$ embeds into $P_n(A)$. \end{rem}


Let us now discuss how formality of $A$ relates to the formality of $P_n A$.

\begin{prop}\label{formaldualization} If $P_n(A)$ is formal, then $A$ is formal. \end{prop}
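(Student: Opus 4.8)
The plan is to use that $A$ is a cdga retract of $P_nA$, together with the fact that a cdga retract of a formal cohomologically connected cdga is again formal, the latter to be established by comparing minimal Sullivan models. Concretely, as already noted in the proof of \Cref{prop:PDnotFormal}, the inclusion $i\colon A\hookrightarrow P_nA$ of the subalgebra and the projection $r\colon P_nA\to A$ are morphisms of cdga's with $r\circ i=\id_A$, so it suffices to exploit this retract.

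First I would choose minimal Sullivan models $\phi_A\colon(\Lambda V,d)\xrightarrow{\sim}A$ and $\phi\colon(\Lambda W,d)\xrightarrow{\sim}P_nA$; both $A$ and $P_nA$ are cohomologically connected (the latter because $H^n(A)=0$), so these exist, and formality of either is equivalent to the existence of a quasi-isomorphism from its minimal model to the cohomology algebra with trivial differential. By the lifting property of Sullivan algebras along quasi-isomorphisms (see \cite{FHT12}), the composite $i\circ\phi_A$ factors through $\phi$ up to homotopy: there is a cdga morphism $\psi\colon(\Lambda V,d)\to(\Lambda W,d)$ with $\phi\circ\psi\simeq i\circ\phi_A$, and hence $H(\phi)\circ H(\psi)=H(i)\circ H(\phi_A)$ on cohomology.

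Next I would bring in the formality of $P_nA$, in the form of a quasi-isomorphism $\eta\colon(\Lambda W,d)\xrightarrow{\sim}(H(P_nA),0)$, which induces a graded algebra isomorphism $H(\eta)\colon H(\Lambda W)\xrightarrow{\sim}H(P_nA)$. The claim would then be that the composite of cdga morphisms
\[
(\Lambda V,d)\xrightarrow{\psi}(\Lambda W,d)\xrightarrow{\eta}(H(P_nA),0)\xrightarrow{(q,0)}(H(A),0),
\]
with $q:=H(r)\circ H(\phi)\circ H(\eta)^{-1}\colon H(P_nA)\to H(A)$, is a quasi-isomorphism. Here $q$ is a composite of graded algebra homomorphisms, hence one itself, so $(q,0)$ really is a morphism to $(H(A),0)$. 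On cohomology the composite induces
\[
q\circ H(\eta)\circ H(\psi)=H(r)\circ H(\phi)\circ H(\psi)=H(r)\circ H(i)\circ H(\phi_A)=H(\phi_A),
\]
which is an isomorphism; therefore the composite is a quasi-isomorphism from the minimal model of $A$ to $(H(A),0)$, so $A$ is formal.

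The step I expect to require the most care is the choice of $q$. One is tempted to simply postcompose $\eta\circ\psi$ with the algebra projection $(H(P_nA),0)\to(H(A),0)$ associated to the splitting $H(P_nA)=H(A)\oplus H(D_nA)$, but this need not yield a quasi-isomorphism: passing through the minimal model $\Lambda W$ introduces the automorphism $H(\phi)\circ H(\eta)^{-1}$ of $H(P_nA)$, which in general does not preserve that splitting. Twisting the projection $H(r)$ by precisely this automorphism, as in the definition of $q$ above, is exactly what makes the identity $H(r)\circ H(i)=\id_{H(A)}$ survive and forces the induced map on cohomology to be an isomorphism.
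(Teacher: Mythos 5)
Your proof is correct and follows essentially the same strategy as the paper: both exploit the cdga retract $A\xrightarrow{i}P_nA\xrightarrow{r}A$ together with formality of $P_nA$ to produce a quasi-isomorphism from the minimal model of $A$ to $(H(A),0)$ by composing the lift of $i$ with a formality quasi-isomorphism for $P_nA$ and a projection back onto $H(A)$. The only cosmetic difference is that the paper invokes the Deligne--Griffiths--Morgan--Sullivan result to choose the formality map $M(P_nA)\to H(P_nA)$ inducing the identity on cohomology, whereas you work with an arbitrary formality quasi-isomorphism $\eta$ and correct by the automorphism $H(\phi)\circ H(\eta)^{-1}$ in your definition of $q$.
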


\begin{proof} We mimic the proof that the retract of a formal space is formal \cite[Example 2.88]{FOT08}. Consider the maps of cdga's $A \xrightarrow{i} P_n(A) \xrightarrow{r} A$ used in the proof of \Cref{prop:PDnotFormal}. 
Take minimal models of $A$ and $P_n(A)$, and a map $\phi$ from the minimal model of $P_n(A)$ to its cohomology which induces the identity on cohomology \cite[Theorem 4.1]{DGMS75}. We have the following homotopy commutative diagram, where $\widehat{\imath}$ and $\widehat{r}$ denote the induced maps on minimal models:

$$\begin{tikzcd}
A \arrow[r, "i", hook]                      & P_n(A) \arrow[r, "r"]                                              & A                      \\
M(A) \arrow[u, "\sim"] \arrow[r, "\widehat{\imath}"] & M(P_n(A)) \arrow[u, "\sim"] \arrow[r, "\widehat{r}"] \arrow[d, "\phi"] & M(A) \arrow[u, "\sim"] \\
                                            & H(P_n(A)) \arrow[r, "{\widehat{r}}_*"]                                 & H(A)                  
\end{tikzcd}$$

Now the composition $\widehat{r}_* \phi \widehat{\imath}$ induces $\widehat{r}_* \widehat{\imath}_*$ on cohomology, which is an isomorphism since $r i = \mathrm{id}$. \end{proof}

We also have the converse, whose proof we postpone to \Cref{inftyPDsection}, as our proof will go through the category of $A_\infty$-algebras.

\begin{prop}[\Cref{AformaliffPAformal}] $A$ is formal if and only if $P_n A$ is formal. \end{prop}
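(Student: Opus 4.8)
Since \Cref{formaldualization} already gives the implication ``$P_nA$ formal $\Rightarrow A$ formal,'' the plan is to prove the converse: if $A$ is formal, then $P_nA$ is formal. The naive hope would be that $P_n$ is functorial enough that applying it to a zigzag of quasi-isomorphisms $A \leftarrow B_1 \to \cdots \to (H(A),0)$ yields a zigzag connecting $P_nA$ to $P_n(H(A),0)$, which is manifestly formal (its differential is zero). The obstacle is that $P_n$ is only functorial for maps equipped with a dg-module retract (\Cref{lem:naturality}), and a quasi-isomorphism need not come with such a retract, nor is there an obvious compatible choice along a zigzag. So the key point to establish is a \emph{quasi-isomorphism invariance} statement: if $A \to A'$ is a quasi-isomorphism of cdga's (satisfying the standing finiteness/connectedness hypotheses), then $P_nA$ and $P_nA'$ are quasi-isomorphic cdga's (this is the ``\Cref{cor:qi-invariance}'' referenced in \Cref{thmA}(2)).

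To get this invariance, the cleanest route is to pass to the $A_\infty$ (or $C_\infty$) setting, which is presumably why the authors postpone the proof to \Cref{inftyPDsection}. First I would observe that the dualization construction $P_n$ makes sense on $A_\infty$-algebras: given an $A_\infty$-algebra $A$, one forms $P_nA := A \oplus D_nA$ where $D_nA$ is the shifted dual, which is naturally an $A_\infty$-bimodule over $A$, and the square-zero extension along it is again an $A_\infty$-algebra (this is the content of \Cref{inftyPDsection}). Crucially, in the $A_\infty$-world every cdga is quasi-isomorphic, via an $A_\infty$-quasi-isomorphism, to its cohomology $H(A)$ equipped with a minimal $A_\infty$-structure (the homotopy transfer / Kadeishvili theorem), and $A_\infty$-quasi-isomorphisms are invertible up to homotopy. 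So the strategy is: (i) show $P_n$ sends $A_\infty$-quasi-isomorphisms to $A_\infty$-quasi-isomorphisms — this should follow because an $A_\infty$-quasi-isomorphism $A \to A'$ induces a quasi-isomorphism of the dual bimodules $D_nA' \to D_nA$ in the appropriate variance, hence a quasi-isomorphism of square-zero extensions; and (ii) conclude that for $A$ formal, $P_nA \simeq P_n(H(A),0)$ as $A_\infty$-algebras, where the latter is an honest cdga with zero differential, hence formal, and therefore $P_nA$ is formal.

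The step I expect to be the main obstacle is (i): carefully checking that $P_n$ preserves $A_\infty$-quasi-isomorphisms, since one must track how the higher $A_\infty$-operations and morphism components interact with the dualization, and verify that the induced map on the mixed summand $D_n$ is a quasi-isomorphism of complexes (which reduces, by $H^k(D_nA) = (H^{n-k}(A))^\vee$, to the fact that dualizing a linear isomorphism on cohomology gives a linear isomorphism). A secondary point requiring care is the implicit claim that formality as a cdga is equivalent to formality as an $A_\infty$-algebra, i.e.\ that an $A_\infty$-quasi-isomorphism between cdga's can be rectified to a zigzag of cdga quasi-isomorphisms; this is standard but should be cited. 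Modulo these, the argument closes: combining (i), the homotopy-transfer identification of a formal $A$ with $(H(A),0)$, and the triviality of the differential on $P_n(H(A),0)$ gives the desired equivalence, and together with \Cref{formaldualization} we obtain the stated biconditional.
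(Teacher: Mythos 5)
Your proposal matches the paper's argument: the converse direction is proved by extending $P_n$ to $A_\infty$-algebras, showing it preserves weak equivalences (so that $P_nA$ is weakly equivalent to $P_n(H(A),0)$, which has zero differential and is therefore formal), and rectifying back to cdga's via the comparison results of [CPRNW19]. The one point you leave slightly vague --- the contravariance of $D_n$ --- is resolved in the paper exactly as you anticipate, by choosing an $A_\infty$-quasi-inverse $B\to A$ of $A_\infty$-$A$-bimodules and applying $D_n$ to that, rather than to the original quasi-isomorphism.
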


If $A$ is a formal minimal Sullivan algebra whose cohomology is finite-dimensional and concentrated in degrees $\leq k$, the formality of $P_n A$ for $n > 2k$ can alternatively be quickly recovered from \cite[Theorem 3.1]{FM05}.

\section{A quadruple Massey product pulling back trivially}\label{counterexample}

\subsection{Construction of the example}

Our goal is to prove the following statement:

\begin{thm}\label{thm: counterex} There is a non-zero degree map of cohomologically connected rational Poincar\'e duality cdga's $P_1 \xrightarrow{f} P_2$ such that $P_1$ carries a non-trivial quadruple Massey product, which becomes trivial in $P_2$. That is, there are cohomology classes $[w],[x],[y],[z] \in H(P_1)$ such that the Massey product $\langle [w],[x],[y],[z] \rangle$ is defined and does not contain zero, and $$0 \in \langle [f(w)], [f(x)], [f(y)], [f(z)] \rangle.$$ \end{thm}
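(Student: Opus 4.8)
The plan is to deduce the statement from the naturality of Poincar\'e dualization established in \Cref{poincaredualization}, once a suitable pair of cdga's has been produced ``below the top degree''.

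First I would reduce to the following problem: find a morphism $f\colon A\to B$ of cohomologically connected finite type cdga's, both cohomologically concentrated in degrees $<n$ for some fixed $n$, such that \emph{(i)} $f$ admits a dg-$A$-module retract $r\colon B\to A$ with $r(1)\neq 0$, and \emph{(ii)} there are pure-even-degree classes $[w],[x],[y],[z]\in H(A)$ with $[w][x]=[x][y]=[y][z]=0$ for which $\langle [w],[x],[y],[z]\rangle$ is defined and does not contain $0$ in $H(A)$, while $\langle [f(w)],[f(x)],[f(y)],[f(z)]\rangle$ does contain $0$ in $H(B)$. Granting such a pair, put $P_1:=P_nA$ and $P_2:=P_nB$: by the lemmas of \Cref{poincaredualization} these are cohomologically connected rational Poincar\'e duality cdga's, and by \Cref{lem:naturality} the map $f\oplus D_n r\colon P_1\to P_2$ is a non-zero degree map of cdga's. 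It restricts to $f$ on the subalgebra $A\subseteq P_nA$, hence carries $[w],[x],[y],[z]\in H(A)\subseteq H(P_1)$ to $[f(w)],[f(x)],[f(y)],[f(z)]$. By \Cref{prop:PDnotFormal} the image of $\langle [w],[x],[y],[z]\rangle$ in $P_1$ is still non-trivial, so $P_1$ carries a non-trivial quadruple Massey product; and since $B$ is a subalgebra of $P_2=B\oplus D_nB$, a defining system in $B$ witnessing $0\in\langle [f(w)],[f(x)],[f(y)],[f(z)]\rangle$ is still a defining system in $P_2$ (cf.\ \Cref{trivialMassey}), so the pulled-back product contains $0$ in $P_2$. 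This already gives the theorem; if in addition $A$ (and hence $B$) is chosen cohomologically simply connected and $n$ is at least two more than the cohomological dimension, \Cref{realization} upgrades $P_1$ and $P_2$ to closed manifolds, yielding the space-level formulation of \Cref{thmB}.

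Everything therefore reduces to constructing the pair $(A,B)$, which is the substance of the argument. For $A$ I would write down a small cdga --- for definiteness cohomologically simply connected and concentrated in degrees $\le 6$ --- carrying four degree-two cohomology classes $[w],[x],[y],[z]$ with vanishing consecutive products and a forced non-trivial quadruple product (which for degree-two classes automatically lands in $H^6$); this amounts to choosing a minimal or otherwise convenient cdga realizing these relations together with their first-level primitives, and then verifying that for \emph{every} defining system $(a,b,c,f,g)$ the class $[wg+ac+zf]$ is nonzero in $H^6(A)$, i.e.\ that the indeterminacy does not contain it. For $B$ I would adjoin further generators to $A$, and this is where I expect the main obstacle: the two requirements on $B$ are in tension. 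Indeed, if one had $B=f(A)\oplus C$ with $C$ a $d$-closed \emph{ideal} (for instance a trivial square-zero extension), then splitting any defining system of the product in $B$ into its $f(A)$- and $C$-components shows that the $f(A)$-component of the output $[wg+ac+zf]$ is itself the output of a defining system in $A$, hence nonzero, so the product could never be made trivial. Consequently $C=\ker r$ must be a $d$-closed $A$-submodule complement to $f(A)$ that is \emph{not} an ideal of $B$, engineered so that products of the new degree-three cohomology classes of $B$ contribute, through their $f(A)$-components, precisely the Massey class $[wg+ac+zf]\in H^6(A)$ needed to cancel it, while the $C$-components of the defining system's output are simultaneously arranged to be exact. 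Constructing such a $B$, checking that $f\colon A\to B$ genuinely admits the dg-$A$-module retract $r$, and exhibiting the defining system in $B$ whose output is $0$, is the technical heart of the proof; the remaining degree bookkeeping and the Poincar\'e-duality conclusions are then routine given \Cref{poincaredualization} and \Cref{lem:naturality}.
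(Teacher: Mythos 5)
Your reduction is exactly the paper's: produce a cdga morphism $f\colon A\to B$ admitting a dg-$A$-module retract $r$ with $r(1)\neq 0$, such that a non-trivial quadruple Massey product on $A$ becomes trivial on $B$, then apply \Cref{lem:naturality}, \Cref{prop:PDnotFormal} and \Cref{trivialMassey} to the Poincar\'e dualizations. You have also correctly isolated the mechanism that makes such a pair possible at all: the retract $r$ is only a module map, so $r(ac)\neq r(a)r(c)$ in general, which is precisely why Taylor's argument (which kills the triple-product analogue of your example) breaks down at the quadruple stage; and your observation that $\ker r$ cannot be a $d$-closed ideal, since otherwise $r$ would be multiplicative and the non-triviality would push forward, matches the paper's motivating remark.

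However, the proposal stops exactly where the theorem's content begins: no pair $(A,B)$ is actually exhibited, and the existence of such a pair is the entire substance of the statement. Two things in particular cannot be waved through. First, the non-triviality of the quadruple product on $A$ requires controlling \emph{all} defining systems; the paper arranges this by truncating so that $H(A)$ is spanned by $1,[X],[Y],[m]$ with $m=Ye+ac+Xf$, whence the primitives at each stage are unique up to closed elements that do not change the output class (\Cref{lem:generatorsinA}). Second, the existence of the retract $r$ is an obstruction-theoretic statement that must be verified generator by generator: the paper does this via semi-free filtrations (\Cref{lem:dgmodules}, \Cref{lem:extension}), and the whole construction hinges on the single non-obvious assignment $r(\alpha\gamma)=m$, compatible with $d$ because $\alpha\gamma$ and $m$ are cohomologous in $B$ (via $dt_1=m-\alpha\gamma$); the defining system $a-\alpha$, $b$, $c+\gamma$ then trivializes the product in $B$ (\Cref{quadruplevanishes}). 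Your description of what $B$ must look like is consistent with this, but until the generators, differentials, and the retract are written down and checked, the proof is a program rather than an argument. (A leaner ad hoc variant, truncating by ideals instead of adding generators, is given in \Cref{alternativeB}.)
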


We first construct cdga's with finite-dimensional cohomology $A,B$ (not satisfying Poincar\'e duality), together with a cdga morphism $f\colon A\rightarrow B$ and a differential graded $A$-module homomorphism $r\colon B\rightarrow A$ sending $1\mapsto 1$, such that $A$ carries a non-trivial quadruple Massey product which $f$ sends to a trivial one. Then Poincar\'e dualizing for large enough $n$ (namely, $n\geq 7$) and applying \Cref{prop:PDnotFormal} (and \Cref{trivialMassey}) will yield a map $P_n(A) \to P_n(B)$ with the desired properties. The cdga's $A$ and $B$ we will construct here will be free as graded algebras, facilitating the further investigation of $B$ in \Cref{homotopicalpropertiesofB}. Another (related but more ad hoc) example proving \Cref{thm: counterex} is provided in \Cref{alternativeB}. 

\begin{rem}
In view of Taylor's result, see Proposition \ref{prop:Taylor}, the above datum $f,r\colon A\leftrightarrows B$ with $A$ carrying a non-trivial triple Massey product which becomes trivial in $B$ can not exist (in particular, there is no such $r$ in \Cref{noPD}). 

Indeed, consider a triple Massey product $\langle [x],[y],[z]\rangle$ in $A$. Then, as the triple Massey product $\langle [f(x)],[f(y)],[f(z)]\rangle$ vanishes in $B$, we can find a defining system $a,b\in B$ with $da= f(x)f(y)$, $db=f(y)f(z)$ such that $af(z)-(-1)^{|x|}f(x)b$ is exact. But then using that $r$ is a dg-$A$-module morphism we find that $r(a),r(b)$ is a defining system for $\langle [x],[y],[z]\rangle$ and the representing cocycle
\[r(a)z-(-1)^{|x|} x r(b)=r\left( af(z)-(-1)^{|x|}f(x)b\right) \] is exact.
This shows triviality of the Massey product $\langle [x],[y],[z]\rangle$. 

In order to motivate what is happening in the example we will construct for \Cref{thm: counterex}, it is rather instructive to check where the above argument fails for quadruple Massey products. To this end consider a quadruple Massey product $\langle [w],[x],[y],[z]\rangle$ in $A$. (For simplicity of notation, assume these classes are in even degrees.) As before, choose a defining system $a,b,c,g,h$ for $$\langle [f(w)],[f(x)],[f(y)],[f(z)]\rangle$$ such that $da=f(w)f(x)$, $db=f(x)f(y)$, $dc=f(y)f(z)$, $dg=af(y)-f(w)b$ and $dh=bf(z)-f(x)c$. While it still holds that $r(a),r(b),r(c),r(g),r(h)$ is a defining system for $\langle [w],[x],[y],[z]\rangle$ it is in general no longer true that the cocycles representing the Massey products get mapped to one another, i.e.\ we might have
\[ wr(h)+r(a)r(c)+zr(g)\neq r(f(w)h+ac+f(z)g)\]
if $r(ac)\neq r(a)r(c)$, which can happen since $r$ is not fully multiplicative. In particular the right hand side being exact does not force the left hand side to be so. In other words: while a non-trivial triple Massey product would obstruct the construction of the module retract $r$ in the counterexample below, the freedom of choosing $r(ac)$ will allow us to construct $r$ even in the presence of a non-trivial quadruple Massey product. 
\end{rem}

We begin with the construction of $A$. Set $(A,d):=(\Lambda (V^{\leq 5})\otimes \Lambda(V^{\geq 6}),d)$, where $$V^{\leq 5}=\langle X,Y,a,b,c,e,f,h,i\rangle$$ with\vspace{0.2cm}

\begin{center}
\begin{tabular}{ c | c | c }
degree & generators & differential \\ \hline
$2$ & $X,Y$ & $X,Y\mapsto 0$ \\
$3$ & $a,b,c$ & $a\mapsto X^2 \quad b\mapsto XY \quad c\mapsto Y^2$\\
$4$ & $e,f$ & $e\mapsto Ya-Xb\quad f\mapsto Yb-Xc$\\
$5$ & $h,i$ & $h\mapsto Xe+ab\quad i\mapsto Yf+bc$

\end{tabular}
\end{center}
and $V^{\geq 6}$ is a vector space  which we construct inductively in order to eliminate all cohomology in degrees $\geq 7$. To be precise, we first choose cycles representing a basis for degree $7$ cohomology. Then for each of these elements introduce a generator in $V^6$ and map it to the chosen cycle under the differential. The resulting algebra will have trivial degree $7$ cohomology while cohomology in degrees $\leq 6$ remains unchanged. Now repeat this process inductively for all higher degrees.

\begin{lem}\label{lem:generatorsinA}
The cohomology of $(A,d)$ is generated by the linearly independent cohomology classes of the cocycles $1,X,Y,m$, where $m=Ye+ac+Xf$. Furthermore the Massey product $\langle [X],[X],[Y],[Y]\rangle\subseteq H^6(A)$ equals $\{[m]\}$, so is non-trivial.\end{lem}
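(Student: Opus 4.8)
The plan is to reduce the statement to a finite computation in the subalgebra $A_0:=(\Lambda V^{\leq 5},d)$, compute $H^{\leq 6}(A_0)$ by hand, and read off both the cohomology ring of $A$ and the Massey product. First I would note that $H(A)=H^{\leq 6}(A_0)$: the generators in $V^{\geq 6}$ lie in degrees $\geq 6$ with differentials in degrees $\geq 7$, so $A$ and $A_0$ agree through degree $5$, and $A^6=A_0^6\oplus W$ with $W$ spanned by the adjoined degree-$6$ generators, whose differentials represent a basis of $H^7(A_0)$. A linear-independence argument (using that those differentials are independent modulo boundaries) shows that passing from $A_0$ to $A$ leaves cohomology in degrees $\leq 6$ unchanged, while by construction it kills everything in degrees $\geq 7$; iterating through the stages is harmless since products of newly adjoined generators only occur in degrees far above the degree being cleared at each step. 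Thus it suffices to compute $H^{\leq 6}(A_0)$.

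Next I would write $A_0$ out through degree $7$; the relevant graded pieces have dimensions $1,0,2,3,5,8,11$ in degrees $0,\dots,6$, and in degree $7$ one only needs the span of $\{X^2,XY,Y^2\}\cdot\{a,b,c\}$. A direct check gives: $d$ is injective on $A_0^3$, so $H^1(A_0)=H^3(A_0)=0$; $\ker(d\colon A_0^4\to A_0^5)=\langle X^2,XY,Y^2\rangle=\operatorname{im}(d\colon A_0^3\to A_0^4)$, so $H^4(A_0)=0$; $\ker(d\colon A_0^5\to A_0^6)=\langle Xb-Ya,\ Xc-Yb\rangle=\operatorname{im}(d\colon A_0^4\to A_0^5)$, so $H^5(A_0)=0$; and $H^0(A_0)=\langle 1\rangle$, $H^2(A_0)=\langle X,Y\rangle$. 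The one substantial computation is degree $6$: $\ker(d\colon A_0^6\to A_0^7)$ is $7$-dimensional, with basis $X^3,\,X^2Y,\,XY^2,\,Y^3,\,ab+Xe,\,m=ac+Xf+Ye,\,bc+Yf$, while $\operatorname{im}(d\colon A_0^5\to A_0^6)=\langle X^3,X^2Y,XY^2,Y^3,\,ab+Xe,\,bc+Yf\rangle$ has dimension $6$ (these being $d(Xa),\,d(Xb)=d(Ya),\,d(Xc)=d(Yb),\,d(Yc),\,d(h),\,d(i)$), so $H^6(A_0)=\langle[m]\rangle$. Hence $H(A)$ is spanned by $1,X,Y,m$; these are linearly independent, lying in degrees $0,2,2,6$ with $X,Y$ manifestly independent, and they generate $H(A)$ as an algebra since $X^2=da$, $XY=db$, $Y^2=dc$ are exact while $Xm,Ym,m^2$ have degree $\geq 8$ and hence vanish in $H(A)$.

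Finally, for the Massey product: the four classes multiply pairwise to zero by the above, and the degree-$3$ and $4$ generators give a defining system for $\langle[X],[X],[Y],[Y]\rangle$ in the sense recalled earlier — $da=X^2$, $db=XY$, $dc=Y^2$, and the two remaining elements of the defining system may be taken to be $e$ and $f$, since $de=Ya-Xb$ and $df=Yb-Xc$ match the required relations. The associated cohomology class is then $[Xf+ac+Ye]=[m]$, so $[m]\in\langle[X],[X],[Y],[Y]\rangle$. To upgrade this to non-triviality I would identify the indeterminacy: since $A^3=\langle a,b,c\rangle$ and $d$ is injective there, $Z^3(A)=0$, so $a,b,c$ are forced; the two remaining elements of a defining system range over a coset of $Z^4(A)=\langle X^2,XY,Y^2\rangle$, and replacing them by $u,v$ from this space alters the representative cocycle by $Xv+Yu\in\langle X^3,X^2Y,XY^2,Y^3\rangle$, which is exact (e.g.\ $X^3=d(Xa)$). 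Hence $\langle[X],[X],[Y],[Y]\rangle=\{[m]\}$, a single nonzero class, so the product is non-trivial.

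I expect the main obstacle to be the degree-$6$ linear algebra in $A_0$ — choosing bases for $A_0^6$ and $A_0^7$, computing $\ker d$ and $\operatorname{im} d$ there, and especially tracking which degree-$6$ cocycles are boundaries. The points requiring genuine care are the reduction $H(A)=H^{\leq 6}(A_0)$ (that the inductive construction of $V^{\geq 6}$ does not perturb $H^6$) and the explicit indeterminacy computation, which is exactly what separates ``$[m]$ is one value of the product'' from ``$0$ is not a value''.
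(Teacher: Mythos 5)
Your proposal is correct and follows essentially the same route as the paper: a degree-by-degree linear-algebra computation of $H^{\leq 6}$ of the finitely generated stage (with the observation that the generators in $V^{\geq 6}$ do not disturb cohomology in degrees $\leq 6$), followed by noting that the primitives $a,b,c$ are forced, that the second-level primitives vary only by elements of $\Lambda^2(X,Y)$, and that the resulting perturbation $Xv+Yu$ of the representative $m$ is exact. Your explicit identification of $\ker d$ and $\operatorname{im} d$ in degree $6$ matches the paper's computation, so there is nothing to flag.
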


\begin{proof}
Clearly $1,X,Y$ generate cohomology in degrees $\leq 2$. Furthermore $A^3=\langle a,b,c\rangle$ maps isomorphically onto $\Lambda^2 (X,Y)$ so there is no cohomology in degree $3,4$ in $\Lambda (X,Y,a,b,c)$. This changes in degree $5$, where the $\ker d$ is generated by $Ya-Xb$, $Yb-Xc$. Note that the corresponding cohomology classes do indeed form a basis of $H^5(\Lambda(X,Y,a,b,c),d)$, since $d$ vanishes on the degree $4$ span of the above generators. Thus after introducing $e,f$ we obtain $H^5(\Lambda (X,Y,a,b,c,e,f),d)=0=H^4(\Lambda (X,Y,a,b,c,e,f),d)$. At this point we compute that the degree $6$ part of $\ker d$ is $\langle Xe+ab,m,Yf+bc\rangle\oplus \Lambda^3(X,Y)$. The differential maps the degree $5$ span of the above generators onto $\Lambda^3(X,Y)$ so the cocycles in the left hand factor yield a basis for the cohomology at this stage, after introducing $h,i,V$ only the class of $m$ remains, generating $H^6(A)$. This proves the first part of the lemma. The reader can also verify this with the ``Commutative Differential Graded Algebras'' module in \cite{Sage}: 

{\tiny \begin{align*} &T.<X,Y,a,b,c,e,f,h,i> = \textrm{GradedCommutativeAlgebra(QQ, degrees = (2,2,3,3,3,4,4,5,5))} \\
&A = T.\textrm{cdg}\_{\textrm{algebra}}(\{a: X*X, b: X*Y, c: Y*Y, e: Y*a - X*b, f: Y*b - X*c, h: X*e + a*b, i: Y*f + b*c\}) \\
&[A.\textrm{cohomology}(i) \textrm{ for } i \textrm{ in } [1..7]] \end{align*} }

When writing down a defining system for the Massey product $\langle [X],[X],[Y],[Y]\rangle$, the unique choice for the primitives of $X^2,XY,Y^2$ is $a,b,c$. When choosing primitives $p_1,p_2$ for the cocycles $Ya-Xb$ and $Yb-Xc$, we get $p_1=e+\alpha_1$, $p_2=f+\alpha_2$ for some $\alpha_i\in (\ker d)^4=\Lambda^2(X,Y)$. Then the resulting cocycle representing a class in $\langle [X],[X],[Y],[Y]\rangle$ is $m+Y\alpha_1+X\alpha_2$. Independently of the choice of the exact $\alpha_i$, this is cohomologous to $m$. 
\end{proof}

Now we come to the construction of $B$, which we will define as $(A\otimes \Lambda W,d)$. Up until degree $5$ the generators of $W$ and their images under $d$ are given as follows:
\vspace{0.2cm}

\begin{center}
\begin{tabular}{ c | c | c }
degree & generators & differential\\ \hline
$3$ & $\alpha,\gamma$ & $\alpha,\gamma\mapsto 0$ \\
$4$ & $s_{X\alpha},s_{Y\alpha},s_{X\gamma},s_{Y\gamma}$ & $s_{*}\mapsto *$\\
$5$ & $t_i$, $i=1,\ldots,9$ & $t_i\mapsto v_i$

\end{tabular}
\end{center}
\vspace{0.2cm}
where
\smallskip
\begin{center}
\begin{tabular}{l l l }
$v_1=m-\alpha\gamma,$ & $v_2=Ys_{X\alpha}-Xs_{Y\alpha},$ & $v_3=Ys_{X\gamma}-Xs_{Y\gamma},$\\
 $v_4=a\alpha-Xs_{X\alpha},$ & $v_5=b\alpha-Xs_{Y\alpha},$ & $v_6=c\alpha-Ys_{Y\alpha},$\\
 $v_7=a\gamma-Xs_{X\gamma},$ & $v_8=b\gamma-Xs_{Y\gamma},$
& $v_9=c\gamma-Ys_{Y\gamma}.$
\end{tabular}
\end{center}
\smallskip

We check that at this stage
$H^*(A\otimes \Lambda (\alpha,\gamma,s_{X\alpha},s_{Y\alpha},s_{X\gamma},s_{Y\gamma}),d)$ is trivial in degree $5$, and in degree $6$ a basis is represented by $m$ and the cocycles $v_i$, $i=1,\ldots,9$.
We finish the construction of $B$ by defining $W^{\geq 6}$ so that $H^{\geq 7}(B)=0$ by inductively killing all cohomology in degrees $\geq 7$. We will not need to describe this last step explicitly.

\begin{rem}
The cdga $(A\otimes\Lambda W^{\leq 5},d)$ is minimal and one can carry out the construction such that $(B,d)$ is minimal. In fact $A\rightarrow B$ is a relative minimal model. Cohomologically $H^*(B)=H^*(A)\oplus \langle [\alpha],[\gamma]\rangle$, where $[\alpha\gamma]$ is a nontrivial generator of $H^6(A)$.
\end{rem}

Now we verify that the quadruple product $\langle [X], [X], [Y], [Y] \rangle$ becomes trivial in $B = A \otimes \Lambda W$ under the inclusion $A \xrightarrow{j} A \otimes \Lambda W$:

\begin{lem}\label{quadruplevanishes} We have $0 \in \langle j [X], j [X], j [Y], j [Y] \rangle$. \end{lem}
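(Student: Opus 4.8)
The task is to exhibit a defining system for the quadruple Massey product $\langle \iota[X],\iota[X],\iota[Y],\iota[Y]\rangle$ in $B$ whose resulting cohomology class is zero. The plan is to write down such a defining system explicitly using the new generators in $W$, and then verify the two differential equations a defining system must satisfy together with the exactness of the representing cocycle.

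First I would fix the ``inner'' primitives. Since $X^2 = da$, $XY = db$, $Y^2 = dc$ in $A$, the only natural choices for the degree-$3$ primitives are $a,b,c$ themselves (exactly as in the proof of \Cref{lem:generatorsinA}). Next, for the degree-$4$ primitives one needs elements $g,h$ (in the notation of the quadruple-product definition, the elements called $f,g$ there) with $dg = Ya - Xb$ and $dh = Yb - Xc$; in $A$ these are $e$ and $f$, and since $\iota$ is an algebra map the same works in $B$. So far the defining system is the image of the (essentially forced) defining system in $A$, whose representing cocycle is $m = Ye + ac + Xf$, which is \emph{not} exact in $A$. The point is that in $B$ we have extra room: I would now use the relation $dt_1 = v_1 = m - \alpha\gamma$, i.e.\ $m = \alpha\gamma + d t_1$ in $B$. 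Thus the representing cocycle $m$ of the naive defining system becomes cohomologous to $\alpha\gamma$ in $B$ — still not obviously zero. The resolution is to modify the defining system so that the new representing cocycle is exactly $m - \alpha\gamma$, or to absorb $\alpha\gamma$ by altering the degree-$4$ primitives. Concretely, since $\alpha = ds_{X\alpha}$-type relations and the $v_i$ for $i \geq 2$ relate products like $a\alpha$, $Ys_{X\alpha}$ etc.\ to exact terms, one can adjust $e \rightsquigarrow e + (\text{correction in } W)$ and $f \rightsquigarrow f + (\text{correction in } W)$ so that the new representing cocycle $Y(e+\ldots) + ac + X(f+\ldots)$ equals $m$ minus precisely $\alpha\gamma$ plus something exact, hence equals $d t_1$ up to exact terms, hence is exact.

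The concrete bookkeeping is: I want correction terms $\epsilon, \varphi \in (B\otimes\Lambda W)^4$ with $d(e+\epsilon) = Ya - Xb$ and $d(f+\varphi) = Yb - Xc$ (so $d\epsilon = d\varphi = 0$, meaning $\epsilon,\varphi$ are cocycles — but we are free to choose them to be anything closed of degree $4$, in particular combinations of $s_{X\alpha},s_{Y\alpha},s_{X\gamma},s_{Y\gamma}$ paired against $\alpha,\gamma$ won't be closed, so actually the corrections must themselves be closed, e.g.\ multiples of $\alpha,\gamma$ times degree-$1$ classes — but there are none in degree $1$). This suggests instead modifying at the degree-$5$ stage: the defining-system equations $dg = ay - wb$ and $dh = bz - xc$ only determine $g,h$ up to closed elements of degree $5$. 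Since $H^5(B) = 0$, adding closed degree-$5$ elements means adding exact ones, which changes the representing cocycle $wg + ac + zf \rightsquigarrow$ by $w\cdot(\text{exact}) + z\cdot(\text{exact})$ — this does change the class. So I would choose $g = e + g_0$, $h = f + h_0$ with $g_0, h_0$ built from the $s_*$ and $t_i$ so that $d g_0 = 0$, $d h_0 = 0$ (forcing $g_0,h_0$ to be linear combinations of the closed classes $v_i$, hence exact: $g_0 = \sum \lambda_i t_i$ won't be closed unless $\lambda_i v_i$... ). The cleanest route, and the one I'd actually pursue: take the defining system to be $a,b,c$ (inner), $e + s\text{-terms}$, $f + s\text{-terms}$ where the $s$-terms are chosen using $d s_{X\alpha} = \alpha$ etc.\ together with $v_4,\dots,v_9$ to convert $Ya\alpha$-type products, so that the final representing cocycle is $m - \alpha\gamma = d t_1$, manifestly exact. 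The main obstacle is precisely this choice of correction terms: one must track signs and degrees carefully and use the relations $v_2,\dots,v_9$ (which were evidently \emph{designed} for this) to rewrite $Y\cdot(\text{correction}) + X\cdot(\text{correction})$ as $-\alpha\gamma$ plus an exact term.

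Once the defining system and its representing cocycle $m - \alpha\gamma$ are in hand, the conclusion is immediate: $m - \alpha\gamma = d t_1$ is exact, so the representing class is $0$, hence $0 \in \langle \iota[X],\iota[X],\iota[Y],\iota[Y]\rangle$. I would end by remarking (as a sanity check, and as preparation for \Cref{homotopicalpropertiesofB}) that this is consistent with the cohomological description $H^*(B) = H^*(A) \oplus \langle[\alpha],[\gamma]\rangle$ with $[m] = [\alpha\gamma]$: the generator $t_1$ is exactly what kills the old top class $m$ of $A$ while the product $\alpha\gamma$ of the two new degree-$3$ classes takes its place, so the Massey product, forced to land in the one-dimensional space where $m$ used to be nonzero, is now free to vanish. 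The essential point — and the reason the analogous triple-product statement is impossible by \Cref{prop:Taylor} — is that the module retract $r \colon B \to A$ need not send $r(ac)$ to $r(a)r(c) = ac$, so the correction terms living in $W$ need not survive to $A$.
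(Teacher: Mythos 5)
There is a genuine gap, and it is located exactly where you declare the first-level primitives to be forced. You assert that ``the only natural choices for the degree-$3$ primitives are $a,b,c$ themselves,'' carrying over the uniqueness argument from $A$ (where $H^3(A)=0$). But in $B$ this is false: $H^3(B)=\langle[\alpha],[\gamma]\rangle\neq 0$, so the primitives of $X^2$ and $Y^2$ may be perturbed by the closed elements $\alpha,\gamma$. This freedom is the entire mechanism of the paper's proof: it takes $d(a-\alpha)=X^2$, $db=XY$, $d(c+\gamma)=Y^2$, then $d(e-s_{Y\alpha})=(a-\alpha)Y-Xb$ and $d(f-s_{X\gamma})=bY-X(c+\gamma)$, and the representing cocycle $(e-s_{Y\alpha})Y+(a-\alpha)(c+\gamma)+X(f-s_{X\gamma})$ acquires the cross-term $-\alpha\gamma$ from the middle product, making it equal to $d(t_1+t_6+t_7)$. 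Once you freeze the degree-$3$ primitives at $a,b,c$, your plan cannot be completed: the degree-$4$ primitives $e,f$ can then only be modified by \emph{closed} degree-$4$ elements, and since $H^4(B)=0$ these are exact; adding an exact element $d\xi$ to $e$ changes the representing cocycle by $Y\,d\xi=\pm\,d(Y\xi)$ (contrary to your claim, this does \emph{not} change the class, since $X,Y$ are closed). Hence with inner primitives $a,b,c$ every defining system represents $[m]=[\alpha\gamma]\neq 0$ in $H^6(B)$, and the Massey product would appear not to contain $0$ along that route.

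Two smaller but related errors feed this: you write ``$\alpha=ds_{X\alpha}$-type relations,'' but the table gives $ds_{X\alpha}=X\alpha$ (the classes $\alpha,\gamma$ are \emph{not} exact in $B$ --- they are new cohomology generators, which is precisely why they are available to perturb the degree-$3$ primitives and why $\alpha\gamma$ can replace $m$ as the generator of $H^6(B)$); and you misplace the degrees of the second-level primitives (they live in degree $4$, not $5$). Your closing heuristic --- that $[m]=[\alpha\gamma]$ and the retract $r$ is not multiplicative on $ac$ --- is correct and is indeed the right intuition, but the proof itself needs the perturbation $a\rightsquigarrow a-\alpha$, $c\rightsquigarrow c+\gamma$ at the first stage of the defining system, which your write-up explicitly rules out.
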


\begin{proof} For simplicity we omit explicitly mentioning the inclusion map $j$. Choose $X, Y$ as representatives of $[X], [Y]$, and make the following choice of primitives: $d(a - \alpha) = X^2$, $db = XY$, $d(c+\gamma) = Y^2$. With these choices, the triple product $\langle [X], [X], [Y] \rangle$ is represented by $(a-\alpha)Y - Xb$, for which we choose the primitive $d(e-s_{Y\alpha}) = (a-\alpha)Y - Xb$. For the triple product $\langle [X], [Y], [Y] \rangle$, we have $d(f - s_{X\gamma}) = bY - X(c+\gamma)$ for the given representative. Hence the quadruple product, with these choices of primitives, is represented by \begin{align*} &(e-s_{Y\alpha})Y + (a-\alpha)(c+\gamma) + X(f-s_{X\gamma}) = d(t_1 + t_6 + t_7). \qedhere \end{align*}
\end{proof}

It remains to construct a dg-$A$-module retract of the map $j\colon A\rightarrow A\otimes \Lambda W$. In order to do this we recall the following:

\begin{defi}
Let $A$ be a dga and $(M,d)$ be a dg-$A$-module. Then a semi-free extension of $(M,d)$ is a dg-$A$-module of the form $(M\oplus (A\otimes V),d)$, where $V$ is a graded vector space and $d(1\otimes V)\subset M$.\end{defi}

For us this concept is helpful due to the following standard lemma. Part (1) is an immediate observation, while part (2) is a more explicit form of \cite[Lemma 14.1]{FHT12} which will prove useful when dealing with the explicit example.

\begin{lem}\label{lem:dgmodules}
\begin{enumerate}
    \item Let $f\colon M\rightarrow N$ be a morphism of dg-$A$-modules and $(M\oplus (A\otimes V),d)$ a semi-free extension of $M$. Let $(v_i)_{i\in I}$ be a basis of $V$, and let $(\alpha_i)_{i\in I}$ be a collection of elements in $N$ with $d\alpha_i= f(d v_i)$. Then $f$ extends to a morphism of dg-$A$-modules $M\oplus (A\otimes V) \to N$ by setting $f(v_i)=\alpha_i$.
    \item Let $A\rightarrow A\otimes \Lambda W$ be a relative minimal cdga with $A^1=W^1=0$. For $0\leq j\leq i$, set $V_{(i,j)}=(\Lambda^{i-j}W)^{2i-j}$. Then $\Lambda W=\bigoplus_{0\leq j\leq i} V_{(i,j)}$ and for any $(i,j)$ as above the inclusion
    \[A\otimes \left(\bigoplus_{(k,l)<(i,j)} V_{(k,l)}\right)\rightarrow A\otimes \left(\bigoplus_{(k,l)\leq(i,j)} V_{(k,l)}\right)\]
    is a semi-free extension, where we use the lexicographical order on tuples.
\end{enumerate}
\end{lem}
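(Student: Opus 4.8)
The plan is to treat the two parts separately; part (1) is a formal unwinding of the definition of a semi-free extension, and essentially all the content is in part (2), which refines \cite[Lemma 14.1]{FHT12}.

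\textbf{Part (1).} Forgetting differentials, $M\oplus(A\otimes V)$ is the free $A$-module extension of $M$ generated by $V$, so I would let $g\colon M\oplus(A\otimes V)\to N$ be the unique $A$-linear map with $g|_M=f$ and $g(1\otimes v_i)=\alpha_i$, and then check $g\circ d=d\circ g$. On $M$ this is the hypothesis that $f$ is a chain map. On $a\otimes v_i$ one has $d(a\otimes v_i)=(da)\otimes v_i+(-1)^{|a|}a\cdot dv_i$ with $dv_i\in M$ by the semi-free condition, so
\[
g(d(a\otimes v_i))=(da)\,\alpha_i+(-1)^{|a|}a\cdot f(dv_i)=(da)\,\alpha_i+(-1)^{|a|}a\cdot d\alpha_i=d(a\,\alpha_i)=d\,g(a\otimes v_i),
\]
using $A$-linearity of $g$, that $f$ is a chain map over $A$, and the choice $d\alpha_i=f(dv_i)$. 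Extending linearly finishes the argument.

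\textbf{Part (2).} First I would record the decomposition: a monomial of $\Lambda W$ of word length $p$ and internal degree $q$ lies in $(\Lambda^pW)^q=V_{(i,j)}$ with $i=q-p$, $j=q-2p$, and $0\le j\le i$ holds automatically since $W^1=0$ forces $q\ge 2p$; conversely $(i,j)$ recovers $p=i-j$ and $q=2i-j$, so the sum $\Lambda W=\bigoplus_{0\le j\le i}V_{(i,j)}$ is direct and exhaustive, and lexicographic order is a well-order on the index set. The key claim is then
\[
d\bigl(1\otimes V_{(i,j)}\bigr)\ \subseteq\ A\otimes\!\!\bigoplus_{(k,l)<(i,j)}\!\!V_{(k,l)}.
\]
Granting this, each $M_{(i,j)}:=A\otimes\bigoplus_{(k,l)<(i,j)}V_{(k,l)}$ is $d$-stable, hence a sub-dg-$A$-module, and the inclusion $M_{(i,j)}\hookrightarrow A\otimes\bigoplus_{(k,l)\le(i,j)}V_{(k,l)}=M_{(i,j)}\oplus(A\otimes V_{(i,j)})$ satisfies $d(1\otimes V_{(i,j)})\subseteq M_{(i,j)}$, i.e.\ it is a semi-free extension, which is the assertion.

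To prove the claim I would take a monomial $w=w_1\cdots w_p\in V_{(i,j)}$ (generators possibly repeated, $|w|=q$), apply the Leibniz rule to write $dw$ as a sum of terms $\pm(w_1\cdots\widehat{w_l}\cdots w_p)\,dw_l$, and expand $dw_l$ using minimality of the relative model: $dw_l\in A^+\otimes\Lambda W+A\otimes\Lambda^{\ge2}W$ \cite[Ch.~14]{FHT12}. A typical resulting monomial is $a\otimes\eta$ with $\eta=(w_1\cdots\widehat{w_l}\cdots w_p)\,\theta$, $\theta\in\Lambda^{p_\nu}W$, where either $a\in A^+$ (so $|a|\ge2$, since $A^1=0$) or $p_\nu\ge2$, and when $\theta$ is a scalar one has $|a|=|w_l|+1\ge3$ (as $W^1=0$). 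Bookkeeping word length $p'=p-1+p_\nu$ and degree $q'=q+1-|a|$ (from $|a|+|\theta|=|w_l|+1$) puts $\eta\in V_{(i',j')}$ with $i'=i+2-|a|-p_\nu$ and $j'=j+3-|a|-2p_\nu$; checking the three cases $p_\nu=0$, $p_\nu=1$, $p_\nu\ge2$ (in the last, $i'\le i$, with equality only when $|a|=0,\ p_\nu=2$, and then $j'=j-1$) yields $(i',j')<(i,j)$ lexicographically in every case. This last bookkeeping — choosing the two weights $i,j$ so that minimality of the differential together with $A^1=W^1=0$ force the pair to strictly drop, uniformly over the cases — is the one genuinely nontrivial step; the rest is manipulation of definitions.
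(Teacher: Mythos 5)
Your proposal is correct and follows essentially the same route as the paper: part (1) is the same direct verification, and for part (2) you prove exactly the paper's key containment $d(V_{(i,j)})\subseteq A\otimes\bigoplus_{(k,l)<(i,j)}V_{(k,l)}$ by tracking how the word-length/degree bidegree $(p,q)$ changes under the differential, using minimality together with $A^1=W^1=0$ to force lexicographic descent of $(i,j)=(q-p,\,q-2p)$. The only cosmetic difference is that you carry out the bookkeeping monomial-by-monomial via Leibniz, while the paper states the same case analysis directly in terms of the bidegree components of $d$.
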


\begin{proof}
Part (1) is straightforward verification. For the proof of part $(2)$ we observe that due to $W=W^{\geq 2}$ we indeed have \[\Lambda W=\bigoplus_{0\leq 2k\leq l} (\Lambda^k W)^{l}= \bigoplus_{0\leq j\leq i} (\Lambda^{i-j}W)^{2i-j}.\] 
It remains to check that $d(V_{(i,j)})\subseteq A\otimes \left(\bigoplus_{(k,l)<(i,j)} V_{(k,l)}\right)$. To see this, we investigate the differential with respect to its bidegree $A\otimes ((\Lambda^p W)^q)$, where $p$ is the wordlength degree in $W$ and $q$ is the cohomological degree in $\Lambda W$.
If $p$ does not increase then $q$ decreases by at least $1$ due to minimality and $A^1=0$. Furthermore $p$ can decrease by at most $1$ in which case $q$ decreases by $2$ since $d(W)\cap A$ lies in degrees $\geq 3$. Consequently
\begin{align*}
    d\left((\Lambda^{i-j} W)^{2i-j}\right)\subset A\otimes \left((\Lambda^{\geq i-j+1} W)^{\leq 2i-j+1}\oplus  (\Lambda^{i-j} W)^{\leq 2i-j-1}\oplus (\Lambda^{ i-j-1} W)^{\leq 2i-j-2}\right)
    \end{align*}
which proves the claim. \end{proof}

Thus by this lemma, in order to define the retraction $r\colon A\otimes \Lambda W\rightarrow A$ it suffices to inductively specify images of a suitable basis of $\Lambda W$ and extend $A$-linearly. By the following lemma, no obstructions arise past a certain degree.
\begin{lem}\label{lem:extension} Let $A$ be the particular dga described above. Any morphism
\[r\colon A\otimes \left(\bigoplus_{(i,j)\leq (4,3)} V_{(i,j)}\right)\rightarrow A\]
of dg-$A$-modules extends to $A\otimes \Lambda W$.
\end{lem}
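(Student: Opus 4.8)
The plan is to show that once we are past the bidegree $(4,3)$, the obstruction to extending a dg-$A$-module morphism $r$ over the next generators vanishes for purely cohomological reasons: the obstruction lives in $H(A)$ in a degree where $H(A)$ is zero. Concretely, using \Cref{lem:dgmodules}(2), it suffices to extend $r$ one semi-free step at a time, i.e.\ over each $V_{(i,j)}$ with $(i,j) > (4,3)$ in the lexicographic order. For such a step, \Cref{lem:dgmodules}(1) tells us that the extension exists as soon as, for each basis vector $v$ of $V_{(i,j)}$, we can find $\alpha_v \in A$ with $d\alpha_v = r(dv)$. Here $r(dv)$ is already defined, since $dv$ lies in $A \otimes \bigl(\bigoplus_{(k,l)<(i,j)} V_{(k,l)}\bigr)$ by the bidegree estimate proved in \Cref{lem:dgmodules}(2), and $r(dv)$ is automatically a cocycle in $A$ (as $r$ is a chain map and $d^2 v = 0$). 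So the only obstruction to extending over $v$ is the class $[r(dv)] \in H(A)$.

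The next step is a degree count to see this class is forced to vanish. The generator $v \in V_{(i,j)} = (\Lambda^{i-j}W)^{2i-j}$ has cohomological degree $2i - j$, so $dv$ and hence $r(dv)$ have degree $2i-j+1$. For $(i,j) > (4,3)$ in lexicographic order we have either $i \geq 5$, or $i = 4$ and $j \leq 2$; in the first case $2i - j + 1 \geq 2\cdot 5 - j + 1$, and since $0 \le j \le i$ this is $\geq 2i - i + 1 = i + 1 \geq 6$, in fact one checks $2i-j+1 \geq 7$ using $j \leq i$ and $i \geq 5$ (the extreme case $i=5, j=5$ gives degree $6$, so one must be slightly more careful — see below); in the second case $2\cdot 4 - j + 1 \geq 7$. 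By \Cref{lem:generatorsinA}, $H^k(A) = 0$ for all $k \geq 7$ (cohomology is concentrated in degrees $0,2,6$). Hence $[r(dv)] = 0$, a primitive $\alpha_v$ exists, and the extension goes through. Running this inductively over all $(i,j) > (4,3)$ produces the desired extension to all of $A \otimes \Lambda W$.

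The one subtlety — and the step I expect to need the most care — is the bidegree $(5,5)$, where $2i-j = 5$ and so $dv$ has degree $6$, landing in $H^6(A) \neq 0$; similarly one should double-check the few small cases with $2i-j+1 = 6$. However, $V_{(i,j)} = (\Lambda^{i-j}W)^{2i-j}$ with $i-j$ the wordlength: $(5,5)$ would mean wordlength $0$, which only contains scalars, so $V_{(5,5)} = 0$ and there is nothing to extend; more generally $V_{(i,j)}$ is nonzero only when $i - j \geq 1$ (wordlength $\geq 1$, since $W = W^{\geq 2}$ forces a word of length $\ell$ to have degree $\geq 2\ell$, i.e.\ $2i - j \geq 2(i-j)$, equivalently $j \geq i - j$, giving $2j \geq i$; combined with needing $i-j\geq 1$). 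Plugging $i - j \geq 1$ into the degree $2i-j+1 = i + (i-j) + 1 \geq i + 2 \geq 7$ whenever $i \geq 5$; and for $i = 4$, $j \leq 2$ we separately get degree $\geq 2\cdot 4 - 2 + 1 = 7$. So in every bidegree strictly beyond $(4,3)$ that actually contributes a generator, the obstruction sits in $H^{\geq 7}(A) = 0$, and the extension is unobstructed. It then only remains to assemble these steps: this is exactly the inductive colimit over the filtration of \Cref{lem:dgmodules}(2), so $r$ extends to $A \otimes \Lambda W$ as claimed.
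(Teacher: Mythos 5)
Your proof is correct and follows essentially the same route as the paper's: the only obstruction to extending $r$ over a generator $v$ of a later $V_{(i,j)}$ is the class $[r(dv)]\in H(A)$, and every nonzero $V_{(i,j)}$ with $(i,j)>(4,3)$ is concentrated in cohomological degree $2i-j\geq 6$, so the obstruction lands in $H^{\geq 7}(A)=0$. One small slip: lexicographically $(i,j)>(4,3)$ means $i\geq 5$ or $(i,j)=(4,4)$, not ``$i=4$ and $j\leq 2$'' (those bidegrees precede $(4,3)$ and are already in the domain of $r$); this is harmless, since $V_{(4,4)}=(\Lambda^0 W)^4=0$ is exactly covered by your own observation that $V_{(i,j)}\neq 0$ forces $i-j\geq 1$.
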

 \begin{proof}
 Recall that by part (1) of \Cref{lem:dgmodules} the only obstruction to extend $r$ over a new generator $v$ is that the class $[r(dv)]\in H^{*}(A)$ has to vanish. By definition, for $(i,j)>(4,3)$ the space $V_{(i,j)}$ is concentrated in cohomological degrees $\geq 6$ (since $V_{(i,i)}=0 $ for $i\neq 0$) while $H(A)$ is concentrated in degrees $\leq 6$.
 \end{proof} 

Furthermore note that for $(i,j)\leq (4,3)$, we have $V_{(i,j)}\subset \Lambda (W^{\leq 5})$ which means we have already computed all the required algebra generators. We define $r$ according to the following table, where we list all non-trivial $V_{(i,j)}$ with $(i,j)\leq (4,3)$ in their order of occurrence.

\vspace{0.2cm}

\begin{center}
\begin{tabular}{ c | c | c }
extension & generators & image under $r$ \\ \hline
$V_{(0,0)}$ & $1$ & $1\mapsto 1$\\
$V_{(2,1)}$ & $\alpha,\gamma$ & $\alpha,\gamma\mapsto 0$\\
$V_{(3,1)}$ & $s_{X\alpha},s_{Y\alpha},s_{X\gamma},s_{Y\gamma}$ & $s_{*}\mapsto 0$\\
$V_{(4,2)}$ & $\alpha\gamma$ & $\alpha\gamma\mapsto m$
\\
$V_{(4,3)}$ &  $t_1,\ldots,t_9$ & $t_i\mapsto 0$
\end{tabular}
\end{center}
\vspace{0.2cm}

One checks that indeed for any of the generators $v$ above we have $r(dv)=dr(v)$. Then by \Cref{lem:dgmodules} and \Cref{lem:extension} we obtain the desired retraction $r\colon A\otimes \Lambda W\rightarrow A$.

\vspace{0.5em}

Applying the Poincar\'e dualization construction, together with \Cref{prop:PDnotFormal} and \Cref{trivialMassey}, we obtain a non-zero degree map of Poincar\'e duality algebras with the desired properties, as long as $n \geq 7$. This completes the proof of \Cref{thm: counterex}. If $n\geq 8$, so that $P_nA$ and $P_nB$ are cohomologically simply connected, the example can moreover be realized by a map of simply connected spaces \cite{Su77} which satisfy rational Poincar\'e duality.

\begin{rem}\label{alternativeB}
Instead of introducing more generators to cohomologically truncate the cdgas in the previous construction one can consider the following more ad hoc variant of truncation in the spirit of \Cref{noPD}, avoiding some of the technicalities. The cdgas below, although not minimal and cohomologically larger than the previous construction, are rather similar in spirit and do also provide a counterexample for the proof of \Cref{thm: counterex}. Define
$A':=\Lambda(X,Y,a,b,c,e,f)$ with the following degrees and differential:
\begin{center}
\begin{tabular}{ c | c | c }
degree & generators & differential \\ \hline
$2$ & $X,Y$ & $X,Y\mapsto 0$ \\
$3$ & $a,b,c$ & $a\mapsto X^2 \quad b\mapsto XY \quad c\mapsto Y^2$\\
$4$ & $e,f$ & $e\mapsto Ya-Xb\quad f\mapsto Yb-Xc$.
\end{tabular}
\end{center}
 Denote by $m:=Xf+ac+Ye$ a representative for the quadruple Massey product $$\langle [X],[X],[Y],[Y] \rangle$$ which one checks to be nontrivial just as in the original construction. Set $A:=A'/(A')^{\geq 7}$. Next, define $B'=A\otimes\Lambda W$ with $
W = \mathrm{span}\langle \alpha,\gamma,s_{Y\alpha},s_{X\gamma},\Omega\rangle$
with the following degrees and differential:
\begin{center}
\begin{tabular}{ c | c | c }
degree & generators & differential \\ \hline
$3$ & $\alpha,\gamma$ & $\alpha,\gamma\mapsto 0$ \\
$4$ & $s_{Y\alpha},s_{X\gamma}$ & $s_{Y\alpha} \mapsto Y\alpha\quad s_{X\gamma} \mapsto X\gamma$\\
$5$ & $\Omega$ & $\Omega\mapsto m-Xs_{X\gamma}-Ys_{Y\alpha}+a\gamma-\alpha c-\alpha\gamma$
\end{tabular}
\end{center} 
Now, set $B:= B'/I$ where $I$ is the ideal generated by $(\Lambda W)^{\geq 7}$. Then, as an $A$-module, $B$ is free of rank $6$:
\[
B=A\oplus A\alpha\oplus A\gamma\oplus A s_{Y\alpha} \oplus A s_{X\gamma} \oplus A\Omega\oplus A\alpha\gamma .
\]
In fact it is a sequence of semi-free extensions. Thus we can define a module retract $r:B\to A$ by sending all but the first and last summand to zero, sending $1\mapsto 1$ and $\alpha\gamma\mapsto m$. This map of modules is compatible with the differential, i.e. $rd=dr$ which, according to \Cref{lem:dgmodules} can be checked directly on the generators.

The Massey product $\langle [X], [X], [Y], [Y] \rangle$ vanishes on $B$. Indeed, choose primitives $d(a-\alpha) = X^2, db = XY, d(c+\gamma) = Y^2$. Then the triple product $\langle [X], [X], [Y] \rangle$ is represented by $aY - Xb - \alpha Y = d(e-s_{Y\alpha})$, and $\langle [X], [Y], [Y] \rangle$ is represented by $bY - Xc - X\gamma = d(f-s_{X\gamma})$. With these choices of primitives, the quadruple product is represented by $$X(f-s_{X\gamma}) + (a-\alpha)(c+\gamma) + (e-s_{Y\alpha})Y = d\Omega.$$
Poincar\'e dualizing the pair $(A,B)$ with respect to the inclusion and $r$ gives another example as in \Cref{thm: counterex}.
\end{rem}

\subsection{Homotopical properties of $B$ and its Poincar\'e dualization}\label{homotopicalpropertiesofB}

We now consider Massey products and the non-formality of the cdga $B$ constructed for \Cref{thm: counterex} (we will not consider now the one constructed in \Cref{alternativeB}). Namely, we will show all Massey products on $B$ vanish, but that $B$ is not formal. We will explicitly see that one cannot make uniform choices making all Massey products on $B$ vanish.

Recall, $B$ only has cohomology in degrees 2, 3, 6, spanned by $\{[X],[Y]\}$, $\{[\alpha], [\gamma] \}$, $\{[m] = [Ye+ac+Xf]=[\alpha\gamma]\}$ respectively.

\begin{prop}\label{tripleMPonB} All triple Massey products on $B$ vanish. \end{prop}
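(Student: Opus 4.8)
The plan is to enumerate, by degree reasons, all candidate triple Massey products $\langle u,v,w\rangle$ on $B$ and show each is either undefined (some adjacent product is nonzero in cohomology) or forced to be trivial. Recall $H(B)$ is concentrated in degrees $2,3,6$, spanned by $[X],[Y]$ in degree $2$, $[\alpha],[\gamma]$ in degree $3$, and $[m]$ in degree $6$. A triple product $\langle u,v,w\rangle$ requires $uv=0$ and $vw=0$ in $H(B)$, and lives in degree $|u|+|v|+|w|-1$. Since cohomology vanishes above degree $6$, only triples with $|u|+|v|+|w|\le 7$ can be nonzero, so the classes involved must all lie in degrees $2$ and $3$, with at most one of them in degree $3$; moreover at least one of $[X],[Y]$ must appear. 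This leaves only finitely many cases to inspect: triples among $\{[X],[Y]\}$ (total degree $6$, product in $H^5(B)=0$, hence automatically trivial), and triples with exactly one degree-$3$ class, e.g. $\langle [X],[\alpha],[X]\rangle$, $\langle [\alpha],[X],[Y]\rangle$, etc. (total degree $7$, product in $H^6(B)$).

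For the remaining (total degree $7$) cases, I would first record which products of generators actually vanish in $H(B)$: the degree-$4$ products $X^2,XY,Y^2$ vanish with explicit primitives $a-\alpha$ (or $a$), $b$, $c+\gamma$ (or $c$); the degree-$5$ products of a degree-$2$ class with $\alpha$ or $\gamma$ are exact via the $s_{**}$ generators; and the relevant degree-$6$ products $X\alpha$, etc.\ must be checked against the basis $\{[m],[v_1],\dots,[v_9]\}$ of $H^6$ at the intermediate stage and then against $[m]$ in $H(B)$. The key point is that for each such admissible triple, after choosing the forced (or essentially forced) primitives, the representing cocycle of the Massey product lands in $H^6(B)$, which is one-dimensional spanned by $[m]$; I then exhibit an explicit defining system whose output is $0$, exploiting the generators $t_i$ (and $\alpha\gamma$, whose differential is $v_1 = m-\alpha\gamma$) that were built into $B$ precisely to kill these classes — this is exactly the mechanism by which the quadruple product was trivialized in \Cref{quadruplevanishes}, run one level lower. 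Where the indeterminacy $u\cup H(B) + H(B)\cup w$ already contains the representative, triviality is immediate; otherwise one uses the freedom in the primitives of $X^2,XY,Y^2$ (adding elements of $\Lambda^2(X,Y) \subseteq (\ker d)^4$) together with the $s$- and $t$-generators to hit $0$.

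The main obstacle is bookkeeping: one must be careful that the "forced" primitives really are forced (e.g.\ in $A$ there is no choice for the primitive of $X^2$ other than $a$, but in $B$ one may add $\alpha$, $\gamma$, or other degree-$3$ cocycles, which changes the intermediate cocycles), and that after all legal modifications the output still lies in the span of $[m]$ and can be annihilated. I expect the cleanest route is to argue structurally: the inclusion $A \hookrightarrow B$ together with the retract $r$ from the previous subsection shows any Massey product on $B$ restricts along $r$ to one on $A$, and $A$ has essentially only the quadruple product as an obstruction (its triple products $\langle[X],[X],[Y]\rangle$, $\langle[X],[Y],[Y]\rangle$ are defined but, one checks, their images under $r$ behave controllably); combined with the degree count above, which confines everything to $H^6(B)=\QQ[m]$, and the explicit $B$-primitives $\Omega$-type elements killing the degree-$6$ classes, every triple product on $B$ is seen to contain $0$. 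I would then close by verifying the handful of explicit defining systems, each a short computation analogous to the one in the proof of \Cref{quadruplevanishes}.
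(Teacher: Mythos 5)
Your degree count reproduces the paper's reduction: since $H(B)$ lives in degrees $0,2,3,6$, the only triple products that could be non-trivial have two degree-two entries and one degree-three entry and land in $H^6(B)$, which is spanned by $[m]$; every other combination lands in a zero group. From there, however, your write-up falls short of a proof in two respects. First, the claim that this leaves ``only finitely many cases'' is not correct: Massey products are not multilinear in their arguments, so checking the basis triples such as $\langle [X],[X],[\alpha]\rangle$ does not dispose of $\langle c_1[X]+c_2[Y],\,c_3[X]+c_4[Y],\,c_5[\alpha]+c_6[\gamma]\rangle$. The paper's proof accordingly works with arbitrary coefficients $c_i$ throughout, splits into the two shapes (degree-three class as an outer entry, degree-three class as the middle entry), and for each exhibits one explicit defining system whose output is visibly a coboundary (a combination of the $dt_i$). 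Those computations are the entire substance of the proof, and your proposal defers them. Note also that triviality only requires producing a single defining system with exact output, so there is no need to track ``all legal modifications'' of the primitives.

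Second, the ``structural'' shortcut you propose as the cleanest route would fail. The retract $r\colon B\to A$ is only a morphism of dg-$A$-modules, not of algebras --- its failure to be multiplicative is exactly what makes the quadruple-product counterexample possible --- so a Massey product on $B$ does not ``restrict along $r$'' to one on $A$ in any useful sense; the remark following \Cref{thm: counterex} only uses $r$ to show that a triple product of classes pulled back from $A$ which dies in $B$ must already have been trivial in $A$, which is the wrong direction for your purpose. Moreover the classes $[\alpha],[\gamma]$ are not in the image of $H(A)\to H(B)$, so the triple products that actually need checking cannot be transported to $A$ at all. You should drop that paragraph and carry out the defining-system computations with general coefficients, exactly as in the proof of \Cref{quadruplevanishes} one level down.
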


\begin{proof} For degree reasons, we only have to consider triple Massey products involving two degree two classes and one degree three class. There are two cases: when the degree three class is the first or third entry (we only need consider one of these two possibilities), and when the degree three class is the middle entry.

In the first case, i.e. a Massey product of the form $$\langle c_1[X] + c_2[Y], c_3[X] + c_4[Y], c_5[\alpha] + c_6[\gamma] \rangle,$$ first note that $X$, $Y$, $\alpha$, $\gamma$ are the unique representatives of the appropriate cohomology classes (recall that in general the Massey product does not depend on this choice of representatives of the starting classes). Now, we have \begin{align*} (c_1 X + c_2 Y)(c_3 X + c_4 Y) &= d( (c_1c_3)a + (c_1c_4 + c_2c_3)b + (c_2c_4)c), \\ (c_3 X + c_4 Y)(c_5 \alpha + c_6 \gamma) &= d((c_3c_5)s_{X\alpha} + (c_3c_6)s_{X\gamma} + (c_4c_5)s_{Y\alpha} + (c_4c_6)s_{Y\gamma}). \end{align*} With these choices of primitives, the Massey product is represented by \begin{align*} ( (c_1c_3)a + (c_1c_4 + c_2c_3)b &+ (c_2c_4)c)(c_5\alpha + c_6\gamma) \\ &- (c_1 X + c_2 Y)((c_3c_5)s_{X\alpha} + (c_3c_6)s_{X\gamma} + (c_4c_5)s_{Y\alpha} + (c_4c_6)s_{Y\gamma}) \end{align*} \begin{align*} = c_1c_3c_5dt_4 &+ c_1c_3c_6dt_7 + c_2c_4c_5 dt_6 + c_2c_4c_6 dt_9 + c_1c_4c_5 dt_5 \\ &+ c_1c_4c_6 dt_8 + c_2c_3c_5d(t_5-t_2) + c_2c_3c_6d(t_8-t_3), \end{align*} which is exact. For the second case, i.e. a Massey product of the form $$\langle c_1[X] + c_2[Y], c_3[\alpha] + c_4[\gamma], c_5[X] + c_6[Y] \rangle,$$ we choose again $X, Y, \alpha, \gamma$ as the obvious representatives, and \begin{align*} (c_1 X + c_2 Y)(c_3 \alpha + c_4 \gamma) &= d((c_1c_3)s_{X\alpha} + (c_1c_4)s_{X\gamma} + (c_2c_3)s_{Y\alpha} + (c_2c_4)s_{Y\gamma}), \\ (c_3\alpha + c_4 \gamma)(c_5 X + c_6 Y) &= d( (c_3c_5) s_{X\alpha} + (c_3c_6) s_{Y\alpha} + (c_4c_5)s_{X\gamma} + (c_4c_6) s_{Y\gamma}).\end{align*} Then the representative of the Massey product corresponding to these choices of primitives is the image under $d$ of \begin{align*} &c_1c_3c_6t_2 + c_1c_4c_6 t_3 - c_2c_3c_5 t_2 - c_2c_4c_5t_3. \qedhere \end{align*}
\end{proof}

We saw in \Cref{quadruplevanishes} that the quadruple Massey product $\langle [X], [X], [Y], [Y] \rangle$ vanishes in $B$. We extend this to the following:

\begin{prop} All quadruple Massey products on $B$ vanish. \end{prop}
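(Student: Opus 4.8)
The plan is to enumerate, up to degree and symmetry, all possible quadruple Massey products $\langle \xi_1,\xi_2,\xi_3,\xi_4\rangle$ on $B$ and show each one that is defined contains zero. Since $H(B)$ is concentrated in degrees $2,3,6$ and is spanned by $[X],[Y]$ in degree $2$, $[\alpha],[\gamma]$ in degree $3$, and $[m]$ in degree $6$, a defining system requires $\xi_1\xi_2=\xi_2\xi_3=\xi_3\xi_4=0$ and forces the total degree of the product to be $|\xi_1|+|\xi_2|+|\xi_3|+|\xi_4|-2$. As the only nonzero products of generators in $H(B)$ land in $H^6$ (the degree-$2$ classes square to $[m]$-multiples, and degree-$2$ times degree-$3$ is zero on cohomology), the hypothesis $\xi_1\xi_2=0$ etc. already severely restricts the configurations: any $\xi_i$ of degree $3$ or $6$ automatically has zero product with its neighbours, while two adjacent degree-$2$ classes $\xi_i,\xi_{i+1}$ need $\xi_i\cup\xi_{i+1}=0$ in $H^4(B)=0$, which is automatic. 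So the genuine constraint is instead on the \emph{output} degree landing in the (possibly nonzero) range: the product lies in $H^{|\xi_1|+|\xi_2|+|\xi_3|+|\xi_4|-2}$, which must be $0,2,3$ or $6$. Running through the possibilities, the only cases with all four classes of degree $2$ give output degree $6$; mixing in one degree-$3$ class gives output degree $7$ (forcing triviality since $H^7(B)=0$, hence the product, if defined, is automatically $\{0\}$); two or more higher-degree classes push the output degree past $6$ as well. Thus the only substantive case is $\langle \xi_1,\xi_2,\xi_3,\xi_4\rangle$ with all $\xi_i$ a linear combination of $[X]$ and $[Y]$.

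For that case I would argue exactly as in the proof of \Cref{quadruplevanishes} and the proof of \Cref{tripleMPonB}, exploiting the uniqueness of representatives: $X$ and $Y$ are the unique cocycle representatives of $[X],[Y]$, so $\xi_i$ is represented by a fixed linear combination $p_i$ of $X,Y$. The products $p_ip_{i+1}$ are explicit quadratic expressions in $X,Y$; since $d$ maps $\langle a,b,c\rangle$ isomorphically onto $\Lambda^2\langle X,Y\rangle$, there are \emph{unique} primitives for $p_ip_{i+1}$ among $\langle a,b,c\rangle$ up to adding cocycles (which are zero in degree $3$). For the secondary primitives ($df_1 = p_1 p_3' - \ldots$ of the triple-product representatives), the degree-$4$ cocycles of $A\otimes\Lambda W$ are $\Lambda^2\langle X,Y\rangle \oplus \langle\alpha,\gamma\rangle\cdot(\ldots)$ — more carefully, $(\ker d)^4$ in $B$ — and I would note that adding such a cocycle changes the final representative by a coboundary, exactly as in \Cref{lem:generatorsinA}. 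Then the final quadruple-product representative is a fixed cubic combination of the generators $t_1,\dots,t_9$ and products like $a\gamma$, $a\alpha$, etc., and I would verify it is a coboundary by writing it explicitly in terms of $dt_i$ and $d(\text{products of } e,f,s_*)$, using the relations $v_i = dt_i$ from the construction of $B$. The bilinearity of the Massey product in each slot lets me reduce to checking finitely many ``monomial'' quadruple products $\langle [X]^{\epsilon_1}[Y]^{1-\epsilon_1},\ldots\rangle$, $\epsilon_i\in\{0,1\}$ — sixteen cases, many related by the evident $X\leftrightarrow Y$, $\alpha\leftrightarrow\gamma$ symmetry of the defining relations.

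The main obstacle I anticipate is bookkeeping: ensuring that for \emph{every} choice of the secondary primitives (not just one convenient choice) the resulting representative is exact, i.e. that zero genuinely lies in the product for all defining systems consistent with the constraints, rather than merely exhibiting one defining system yielding zero. But since the definition of ``trivial'' adopted in the paper only requires $0$ to be \emph{contained} in the set of representatives over all defining systems (see the discussion after \Cref{prop:PDnotFormal} and \Cref{trivialMassey}), it suffices to produce a single defining system with exact output in each case — which makes the argument a finite, if tedious, computation fully parallel to \Cref{quadruplevanishes}. I would also remark that one can equivalently invoke the Sage verification as in \Cref{lem:generatorsinA}, or observe that all quadruple products are forced to vanish once one knows (from the relative minimal model structure $A\to B$ and $H(B)=H(A)\oplus\langle[\alpha],[\gamma]\rangle$) that the only nonzero class in the relevant output degree $6$ is $[m]$, and track precisely which defining systems can hit it. I expect the cleanest writeup picks the defining system using primitives $a,b,c,e,f,s_*,t_i$ adjusted by the coefficients $c_i$ exactly as in \Cref{tripleMPonB}, and then reads off exactness from the list $v_1,\dots,v_9$.
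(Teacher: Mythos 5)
Your proposal is correct and follows essentially the same route as the paper: a degree count reduces everything to quadruple products of classes in $H^2(B)=\langle[X],[Y]\rangle$ (any entry of degree $3$ or $6$ forces the output into a degree where $H(B)=0$), and that remaining case is settled by exhibiting one explicit defining system built from $a,b,c,e,f,s_*,t_i$ whose output is a coboundary, which is exactly the computation the paper carries out with general coefficients $c_1,\dots,c_8$. One caution: the aside about using ``bilinearity of the Massey product in each slot'' to reduce to sixteen monomial cases is not justified -- set-valued quadruple Massey products are not multilinear (the cross terms such as $ac$ in the representative $wg+ac+zf$ do not decompose additively over a sum of defining systems), so you should stick to your stated fallback of running the computation with arbitrary linear combinations $c_{2i-1}[X]+c_{2i}[Y]$ throughout, as the paper does.
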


\begin{proof} For degree reasons, we only need consider quadruple Massey products of the form $$\langle c_1[X] + c_2[Y], c_3[X] + c_4[Y], c_5[X] + c_6[Y], c_7[X] + c_8[Y] \rangle.$$ As before, there are unique choices of representatives $X, Y$ for $[X], [Y]$ respectively. We will make the following choices of primitives throughout: $$X^2 = d(a-\alpha), \ \ XY = db, \ \ Y^2 = d(c+\gamma).$$ So, we have \begin{align*} (c_1 X + c_2 Y)(c_3 X + c_4 Y) &= d(c_1c_3(a-\alpha) + (c_1c_4+c_2c_3)b + c_2c_4(c+\gamma)), \\ (c_3 X + c_4 Y)(c_5 X + c_6 Y) &= d(c_3c_5(a-\alpha) + (c_3c_6+c_4c_5)b + c_4c_6(c+\gamma)), \\ (c_5 X + c_6 Y)(c_7 X + c_8 Y) &= d(c_5c_7(a-\alpha) + (c_5c_8+c_6c_7)b + c_6c_8(c+\gamma)). \end{align*} 

With obvious choices of primitives, the triple Massey product $$\langle c_1[X] + c_2[Y], c_3[X] + c_4[Y], c_5[X] + c_6[Y] \rangle$$ is then represented by \begin{align*} d\left( (c_2c_4c_5-c_1c_4c_6)s_{X\gamma} + (c_2c_3c_5 - c_1c_3c_6)s_{Y\alpha} + (c_1c_3c_6 - c_2c_3c_5)e + (c_1c_4c_6-c_2c_4c_5)f\right), \end{align*} and $\langle c_3[X] + c_4[Y], c_5[X] + c_6[Y], c_7[X] + c_8[Y] \rangle$ is represented by \begin{align*} d\left( (c_4c_6c_7 - c_3c_6c_8) s_{X\gamma} + (c_4c_5c_7 - c_3c_5c_8)s_{Y\alpha} + (c_3c_5c_8 - c_4c_5c_7)e + (c_3c_6c_8 - c_4c_6c_7)f\right).\end{align*} Then the quadruple Massey product is represented by \begin{align*} 
&d((c_2c_4c_5c_7 - c_1c_3c_6c_8)t_7 - (c_2c_3c_5c_7 - c_1c_3c_6c_7 + c_1c_4c_5c_7 - c_1c_3c_5c_8)t_5 \\ &+ (c_1c_3c_6c_7-c_2c_3c_5c_7+c_1c_3c_5c_8-c_1c_4c_5c_7)h + (c_1c_3c_6c_8-c_2c_4c_5c_7)t_1 \\ &+ (c_2c_4c_5c_8 - c_1c_4c_6c_8 + c_2c_4c_6c_7 - c_2c_3c_6c_8)(t_8 - t_3) - (c_2c_4c_5c_7-c_1c_3c_6c_8)t_6 \\ &+ (c_1c_4c_6c_8-c_2c_4c_5c_8+c_2c_3c_6c_8-c_2c_4c_6c_7)i).\qedhere 
\end{align*}
\end{proof}

\begin{cor} All Massey products on $B$ vanish. \end{cor}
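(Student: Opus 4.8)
The plan is to reduce everything to the cases already settled, \Cref{tripleMPonB} and the preceding proposition on quadruple products, and then to dispose of all $n$-fold Massey products with $n \geq 5$ by a crude dimension count. First I would recall that $H(B)$ is concentrated in degrees $0$, $2$, $3$ and $6$, and in particular that $H^1(B) = 0$. Since an ad hoc Massey product one of whose entries is a scalar multiple of the unit class is either undefined or contains zero, we may assume that every entry of any Massey product under consideration lies in degree $\geq 2$.

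Next I would invoke the standard degree formula for the $n$-fold Massey products of \cite{K66}: for homogeneous classes $x_1, \dots, x_n \in H(B)$, every element of $\langle x_1, \dots, x_n \rangle$ sits in cohomological degree $\sum_{i=1}^n |x_i| - (n-2)$. This follows by an easy induction from the shape of a defining system — a primitive bounding a binary product of two entries lowers the total degree by one, and the representing cocycle of the full product is a sum of such binary products spanning all $n$ entries. With $|x_i| \geq 2$ for every $i$, this degree is at least $2n - (n-2) = n+2$, which is $\geq 7$ as soon as $n \geq 5$. Since $H^{\geq 7}(B) = 0$ by construction, any such Massey product is contained in $\{0\}$; being nonempty whenever it is defined, it therefore contains zero and so is trivial. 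Together with the already-established triple and quadruple cases, this proves that every Massey product on $B$ vanishes.

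I do not anticipate a genuine obstacle in this step. The only bookkeeping is the treatment of the degree-$0$ summand of $H(B)$ and a routine check that the degree formula holds verbatim in the conventions of \cite{K66}. The substantive work lies entirely in \Cref{tripleMPonB} and the quadruple-product proposition, where explicit defining systems had to be exhibited whose representing cocycles are manifestly exact; given those, the corollary itself is essentially a one-line degree argument.
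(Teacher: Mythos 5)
Your proof is correct and follows the paper's argument exactly: the paper also cites the triple and quadruple cases and then dismisses quintuple and higher products ``for degree reasons,'' which is precisely the computation $\sum|x_i|-(n-2)\geq n+2\geq 7$ you spell out, combined with $H^{\geq 7}(B)=0$.
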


\begin{proof} By the above, all triple and quadruple products vanish. For degree reasons we see that there cannot be non-trivial quintuple or higher products.
\end{proof}

Note that the above calculations show that among triple Massey products on $B$, uniform choices of primitives can be made so that they all vanish, and likewise separately for quadruple Massey products. However, we will now see that one cannot make uniform choices \emph{simultaneously} for both the triple and quadruple Massey products. More precisely we have the following:

\begin{defi} We say that all Massey products on a cdga $A$ vanish uniformly if
    there is a map $d^{-1}:\operatorname{im} d\to A$ such that $d\circ d^{-1}=\operatorname{Id}$ and for a Massey product $m=\langle a_{0,1},...,a_{r-1,r}\rangle$ one can inductively build a defining system yielding the trivial class $[0]\in m$ by setting $a_{i,j}:=d^{-1}\sum_{i<l<j}\bar{a}_{i,l}a_{l,j}$, where $\bar{a}=(-1)^{|a|+1} a$ for homogeneous elements.
\end{defi} 

\begin{lem}
    Let $A=(\Lambda V,d)$ be minimal. If $A$ is formal, then all Massey products on $A$ vanish uniformly.
\end{lem}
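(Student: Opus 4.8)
The plan is to use the assumption that $A = (\Lambda V, d)$ is minimal and formal, so that there is a quasi-isomorphism $\rho \colon (\Lambda V, d) \to (H(A), 0)$ inducing the identity on cohomology. The key geometric/algebraic fact about such a map is that it can be taken to be compatible with a choice of homotopy between the two maps $(\Lambda V, d) \to (\Lambda V, d)$ (identity and ``$\rho$ followed by a section''), but more usefully: formality gives a bigrading / filtration on $\Lambda V$ adapted to $\rho$. However, the cleanest route is more direct. First I would fix the contracting homotopy data: since $\rho$ is a surjective quasi-isomorphism, choose a linear section $\sigma \colon H(A) \to \Lambda V$ of $\rho$ landing in cocycles (possible as $\rho$ is a quasi-iso onto a space with zero differential), and then a degree $-1$ map $\eta \colon \Lambda V \to \Lambda V$ with $d\eta + \eta d = \id - \sigma\rho$ and $\eta\sigma = 0$, $\rho\eta = 0$, $\eta^2 = 0$ (standard, obtainable by the usual homological perturbation / spectral sequence argument). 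Set $d^{-1} := \eta$ restricted to $\operatorname{im} d$; note $d\,\eta\,x = x$ for $x \in \operatorname{im} d$ exactly because $\sigma\rho$ kills $\operatorname{im} d$ and the homotopy formula applies. This $\eta$ will serve as the uniform primitive-choosing operator $d^{-1}$.

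Next I would run the induction defining the system $a_{i,j} := d^{-1}\sum_{i<l<j}\bar a_{i,l} a_{l,j}$ for a Massey product $m = \langle a_{0,1}, \dots, a_{r-1,r}\rangle$ whose entries are represented by the chosen cocycles $\sigma(\text{class})$, and show the resulting output cocycle is exact. The mechanism: because $\rho$ is an \emph{algebra} map (it is a cdga quasi-isomorphism), $\rho$ annihilates every product $\bar a_{i,l}a_{l,j}$ whenever at least one factor lies in $\ker\rho \cap \operatorname{im}(\id - \sigma\rho)$ — and inductively each $a_{i,j}$ for $j > i+1$ lies in $\operatorname{im}\eta \subseteq \ker\rho$, while $\sum_{i<l<j}\bar a_{i,l}a_{l,j}$ is a sum of products each involving such a factor. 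Hence $\rho$ of the defining relations' right-hand sides vanish in $H(A)$, which is what makes the Massey product land in the zero class: concretely, the output cocycle $\sum_{l} \bar a_{0,l} a_{l,r}$ (appropriately signed, as in the quadruple case $[wg + ac + zf]$) is a sum of products each with a factor in $\operatorname{im}\eta \subseteq \ker\rho$, so $\rho$ sends it to $0$, and since $\rho$ is a quasi-isomorphism, a cocycle mapping to $0$ in $H(A)$ is exact. Therefore $0 \in m$, and the defining system was built purely by applying the fixed operator $d^{-1} = \eta$, i.e.\ uniformly.

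I would organize the write-up as: (1) produce $\rho, \sigma, \eta$ with the side conditions (citing the standard minimal-model / homological-perturbation construction, e.g.\ the side-conditions version of the homotopy); (2) observe $\operatorname{im}\eta \subseteq \ker\rho$ and $\eta$ restricted to $\operatorname{im} d$ is a section of $d$; (3) do the induction, proving by induction on $j - i$ that $a_{i,j} \in \operatorname{im}\eta$ for $j > i+1$ (base case $a_{i,i+1} = \sigma(\text{class})$, but the first nontrivial step $a_{i,i+2} = \eta(\bar a_{i,i+1}a_{i+1,i+2})$ is already in $\operatorname{im}\eta$; the inductive step needs that the sum fed into $\eta$ is a cocycle, which is the standard Massey-product compatibility, plus closedness under multiplication); (4) conclude the final output cocycle lies in $\ker\rho$ hence is exact.

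The main obstacle I anticipate is step (3)'s bookkeeping: one must check that at each stage $\sum_{i<l<j}\bar a_{i,l}a_{l,j}$ is actually a cocycle (so that applying $\eta$ and then $d$ returns it), which is the familiar but sign-delicate computation underlying the definition of higher Massey products — this uses $d a_{i,l} = \sum_{i<k<l}\bar a_{i,k}a_{k,l}$ from earlier stages together with the Leibniz rule, and the Koszul signs must be tracked carefully (the paper's convention $\bar a = (-1)^{|a|+1}a$ is designed precisely to make this work). A secondary subtlety is ensuring the output expression matches the paper's sign conventions for the Massey product representative (e.g.\ $[wg + ac + zf]$ for quadruple products); but since that representative is, by the unwinding of the definition, exactly $\sum_l \bar a_{0,l} a_{l,r}$ up to the global sign built into the $\bar{(-)}$ notation, and since all we need is that it lies in $\ker\rho$, the sign issue does not affect the conclusion. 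I do not expect any obstruction from minimality beyond its role in guaranteeing a well-defined minimal model and the existence of $\rho$; minimality is not otherwise essential to the argument, though it is convenient for invoking the formality criterion in the form stated earlier in the excerpt.
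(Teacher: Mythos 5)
Your proposal is correct and takes essentially the same approach as the paper: the paper also fixes the formality quasi-isomorphism $\rho\colon (\Lambda V,d)\to (H(A),0)$, chooses $d^{-1}$ with image in the acyclic differential ideal $I=\ker\rho$, and concludes that the uniformly built defining system and the resulting representative lie in $I$, hence are exact. Your extra homological-perturbation data ($\sigma$, $\eta$ with side conditions) is more machinery than the paper uses --- it simply picks a complement $C\subseteq I$ with $d\colon C\to\operatorname{im}d$ an isomorphism --- but the core mechanism is identical.
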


\begin{proof}
    There is a quasi-isomorphism $A\rightarrow H^*(A)$ whose kernel we denote by $I$. This is an acyclic differential ideal. For $\alpha\in \operatorname{im}d$ we note that $\alpha\in I$ is closed and due to acyclicity we find $\beta\in I$ with $d\beta=\alpha$. Hence we find $C\subset I$ such that $d\colon C\rightarrow\operatorname{im}d$ is an isomorphism. We define $d^{-1}$ as the inverse of this isomorphism. In that case any representative for a Massey product built using $d^{-1}$ as above will lie in the ideal $I$ as well. As any closed element in $I$ is exact this proves the lemma.
\end{proof}

\vspace{0.4em}

Now we return to our cdga $B$. Consider the element $X^2$. A primitive for this must be of the form $a + k_1\alpha + k_2 \gamma$ for some coefficients $k_1,k_2$. Consider now the triple Massey product $\langle [X], [X], [\alpha] \rangle$. There are unique choices of representatives $X, \alpha$. A primitive for $X\alpha$ must be of the form $s_{X\alpha} + c_1X^2 + c_2XY + c_3Y^2$, and the Massey product is represented by \begin{align*} (a + k_2 \gamma)\alpha &- X(s_{X\alpha} + c_1X^2 + c_2XY + c_3Y^2) \\ &= d(t_4 - X(c_1a + c_2b + c_3c)) + k_2 \gamma \alpha. \end{align*} From here we see we must choose $k_2 = 0$ in order to make $\langle [X], [X], [\alpha]\rangle$ trivial.

Similarly, we must choose $k_1 = 0$ to make $\langle [X], [X], [\gamma] \rangle$ trivial. But now, the quadruple product $\langle [X], [X], [Y], [Y] \rangle$ can not be made trivial with this choice of primitive $a$ for $X^2$. Indeed, taking the unique representatives $X,Y$, general primitives are given by \begin{align*} XY &= d(b + c_1 \alpha + c_2 \gamma), \\ Y^2 &= d(c+c_3\alpha + c_4 \gamma).\end{align*} Then the left triple product is represented by $aY - Xb - c_1 X\alpha - c_2 X\gamma$, with $e - c_1 s_{X\alpha} - c_2 s_{X \gamma} + d\phi$ being a general choice of primitive (where $\phi$ is an arbitrary element of degree three). The right triple product is represented by $bY + c_1Y\alpha  + c_2 Y\gamma - Xc - c_3 X\alpha - c_4 X \gamma$, with general choice of primitive given by $f + c_1 s_{Y\alpha} + c_2 s_{Y\gamma} - c_3 s_{X\alpha} - c_4 s_{X\gamma} + d\phi'$ for some $\phi'$.

With these choices of primitives, the quadruple product is represented by \begin{align*} Xf + ac + Ye &+ c_1 Xs_{Y\alpha} + c_2 Xs_{Y\gamma} - c_3 Xs_{X\alpha} - c_4 Xs_{X\gamma} \\ &+ c_3 a\alpha + c_4 a\gamma - c_1 Y s_{X\alpha} - c_2 Ys_{X\gamma} + d(X \phi' + \phi Y).\end{align*}

Note that $m = Xf + ac + Ye$ appears, while $\alpha \gamma$ does not; hence this element represents a non-zero cohomology class. We conclude that the Massey products on $B$ do not vanish \emph{uniformly}. We therefore have:

\begin{prop}\label{Bnotformal} $B$ is not formal. \end{prop}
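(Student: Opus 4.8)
The plan is to deduce non-formality of $B$ from the calculations just carried out, using the contrapositive of the lemma asserting that a minimal formal cdga has all Massey products vanishing uniformly. Since $B = A \otimes \Lambda W$ is (or can be taken to be) minimal by the earlier remark, it suffices to exhibit an obstruction to \emph{uniform} vanishing: a single choice of $d^{-1}$ on $\operatorname{im} d$ cannot simultaneously witness triviality of all triple Massey products and of the quadruple product $\langle [X],[X],[Y],[Y]\rangle$ via the canonical inductive defining systems.

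First I would fix notation: a uniform vanishing datum restricts, in particular, to a choice of primitive $d^{-1}(X^2)$, which by degree reasons must be of the form $a + k_1 \alpha + k_2 \gamma$ for scalars $k_1, k_2$. The key computational input is already assembled in the paragraphs above: (i) triviality of the canonical defining system for $\langle [X],[X],[\alpha]\rangle$ forces $k_2 = 0$, since the representative picks up the term $k_2\, \gamma\alpha$, which is a nonzero cohomology class in $H^6(B)$ unless $k_2 = 0$; (ii) symmetrically, triviality of $\langle [X],[X],[\gamma]\rangle$ forces $k_1 = 0$; hence a uniform datum must use $d^{-1}(X^2) = a$. Then I would feed this constraint into the quadruple product computation: with $d^{-1}(X^2) = a$ and generic (i.e.\ most general) primitives for $XY$ and $Y^2$, and generic primitives for the two intermediate triple products, the resulting representative of $\langle [X],[X],[Y],[Y]\rangle$ equals $m + (\text{terms in the ideal } (\alpha,\gamma) \text{ and } d(\cdots))$, and since $m$ generates $H^6(B)$ while no compensating $\alpha\gamma$ term is available, the class is nonzero. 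This contradicts the existence of a uniform vanishing datum.

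Concretely, the proof reads: suppose for contradiction that $B$ is formal. By the remark that $B$ may be taken minimal, together with the lemma above, all Massey products on $B$ vanish uniformly via some $d^{-1}$. As computed above, uniformity applied to $\langle [X],[X],[\alpha]\rangle$ and $\langle [X],[X],[\gamma]\rangle$ forces $d^{-1}(X^2) = a$ (up to the irrelevant choice of the primitives of $X\alpha$, $X\gamma$, which only contribute exact terms). But the displayed computation of $\langle [X],[X],[Y],[Y]\rangle$ with this primitive for $X^2$ shows that every representative obtained from the canonical inductive defining system contains the summand $m = Xf + ac + Ye$ with no $\alpha\gamma$ term to cancel it, hence represents a nonzero class in $H^6(B) = \langle [m] \rangle$. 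This contradicts $0 \in \langle [X],[X],[Y],[Y]\rangle$ being witnessed uniformly, so $B$ is not formal.

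The main obstacle is purely bookkeeping: one must verify that the ``generic primitive'' parametrization genuinely captures \emph{all} defining systems compatible with the fixed $d^{-1}(X^2) = a$, i.e.\ that the only freedom left is by exact terms and by elements of the acyclic-looking part spanned by $\alpha,\gamma$ and their consequences, none of which can produce an $\alpha\gamma$ summand at degree $6$ to cancel $m$. This is exactly what the degree-by-degree analysis preceding the statement accomplishes, so no further work is needed beyond invoking it. One should also note that formality is a quasi-isomorphism invariant, so it is legitimate to replace $B$ by its minimal model before applying the lemma; alternatively, since $B$ was already arranged to be minimal in the construction, this replacement is unnecessary.
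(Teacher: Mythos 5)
Your argument is correct and is essentially the paper's own first proof: invoke the lemma that a formal minimal cdga has uniformly vanishing Massey products, then use the preceding computations to show that the constraints from $\langle [X],[X],[\alpha]\rangle$ and $\langle [X],[X],[\gamma]\rangle$ force $d^{-1}(X^2)=a$, which makes every canonical representative of $\langle [X],[X],[Y],[Y]\rangle$ equal to $m$ plus exact terms, hence nonzero in $H^6(B)$. The paper additionally records two alternative proofs (via \cite[Theorem B]{MSZ23}, and via the non-trivial quadruple Massey product on $P_n(B)$ together with \Cref{AformaliffPAformal}), but your route is the one carried out in detail there.
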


Note that this also follows from \cite[Theorem B]{MSZ23}. We now give a third proof by considering the Poincar\'e dualization $P_n(B)$. Fix a basis for $D_n(B)$, namely the dual basis to that given by the monomials in the chosen generators for $B$. If $p$ is such a monomial, $\widehat{p}$ will denote the corresponding element in the dual.

\begin{lem}\label{nontrivialquadrupleMP} The quadruple Massey product $\langle [X], [Y], [Y], [\widehat{Ye} - \widehat{\alpha \gamma}] \rangle$ is non-trivial in $P_n(B)$ for $n\geq 12$. \end{lem}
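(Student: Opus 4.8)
The plan is to pair the representing cocycle of an arbitrary defining system for $\langle[X],[Y],[Y],[\psi]\rangle$ (with $\psi:=\widehat{Ye}-\widehat{\alpha\gamma}\in(D_nB)^{n-6}$, and $n$ even so that all four classes have pure even degree) with the class $[X]$ under the Poincar\'e duality pairing of $P_nB$, and show the result is always nonzero; since this rules out $0$ for every defining system, the product is non-trivial. The three properties of $\psi$ used are: (i) $\psi$ is closed and represents the generator of $H^{n-6}(D_nB)\cong(H^6B)^\vee$ dual to $[m]$, so $[\psi]\neq0$; (ii) $X\wedge\psi=0$, since every monomial appearing in a product $X\cdot u$ contains the factor $X$, unlike $Ye$ and $\alpha\gamma$; and (iii) $Y\wedge\psi=\widehat e\in(D_nB)^{n-4}$, the dual of the generator $e\in B^4$, and this is a coboundary ($\widehat e=d\,\widehat{Ya}$). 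Property (iii) together with $XY=db$ and $Y^2=dc$ makes the pairwise products needed for the quadruple product vanish, and one checks a defining system exists ($PY-XQ$ is visibly a coboundary, and $P,Q,R$ can be chosen so that $Q\psi-YR$ is one as well).

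So fix a defining system: $P,Q,R,S,T$ with $dP=XY$, $dQ=Y^2$, $dR=Y\psi$, $dS=PY-XQ$ ($|S|=4$), $dT=Q\psi-YR$ ($|T|=n-4$); the Massey product is represented by $XT+PR+\psi S$, and for degree reasons one may take $P=b+\lambda_1\alpha+\lambda_2\gamma$, $Q=c+\mu_1\alpha+\mu_2\gamma$ modulo coboundaries. Reducing $X\cdot(XT+PR+\psi S)$ modulo coboundaries: the term $X\cdot(\psi S)$ vanishes by (ii); using $X^2=da$ one gets $X^2T=d(aT)+a\,dT\equiv a(Q\psi-YR)=aQ\psi-aYR$; and since $XP-aY=-de+\lambda_1\,ds_{X\alpha}+\lambda_2\,ds_{X\gamma}=d\sigma$ with $\sigma=-e+\lambda_1s_{X\alpha}+\lambda_2s_{X\gamma}$ (up to a closed degree-$4$ element in $\langle X^2,XY,Y^2\rangle$, which is irrelevant below), one gets $XPR=(aY+d\sigma)R\equiv aYR-\sigma\,dR=aYR-\sigma\widehat e$. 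The two $aYR$ terms cancel, so $X\cdot(XT+PR+\psi S)\equiv aQ\psi-\sigma\widehat e$. In $H^n(P_nB)\cong\QQ\widehat 1$ ($\widehat 1$ the dual of $1\in B^0$), the class of $aQ\psi$ is $\psi(aQ)\widehat 1=0$ because $aQ=ac+\mu_1a\alpha+\mu_2a\gamma$ involves only monomials distinct from $Ye$ and $\alpha\gamma$, while the class of $\sigma\widehat e$ is $\widehat e(\sigma)\widehat 1=-\widehat 1$. Therefore $[X]\cup[XT+PR+\psi S]=\widehat 1\neq0$, so the representing class is nonzero in $H^{n-2}(P_nB)$.

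As the value $\widehat 1$ does not depend on $\lambda_i,\mu_i$ or on $R,S,T$ — all of which drop out of the reduction — every defining system produces a nonzero class, hence $0\notin\langle[X],[Y],[Y],[\psi]\rangle$ and the lemma follows. The main obstacle, as for any quadruple product, is organizing the computation so that independence of the many choices is manifest rather than verified ad hoc; this is achieved by (a) pairing with $[X]$ rather than identifying the class $[XT+PR+\psi S]$ itself, which annihilates the $\psi S$ summand via $X\wedge\psi=0$, and (b) recognizing $XP-aY$ as an explicit coboundary through the generator $e$ — the same $e$ that trivializes $\langle[X],[X],[Y]\rangle$ inside $A$. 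The only remaining subtlety is careful bookkeeping of the Koszul signs in the differential of $D_nB$ and in the products $a\wedge\varphi$ versus $\varphi\wedge a$.
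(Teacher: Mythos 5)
Your proof is correct and reaches the same conclusion, but it is organized genuinely differently from the paper's. The paper parametrizes an arbitrary defining system explicitly (primitives $b+k_1\alpha+k_2\gamma$, $c+k_3\alpha+k_4\gamma$, $k_5\widehat{Ya}-k_6\widehat{Xb}+d\varphi$ with $k_5+k_6=1$), observes that $k_3=k_4=0$ is forced for the system to close up, and then simplifies the representing cocycle directly, finding it cohomologous to $\widehat{X}$ in every case. You instead cup the representative $XT+PR+\psi S$ with $[X]$ and evaluate in $H^n(P_nB)\cong\QQ\,\widehat{1}$: the identity $X\wedge\psi=0$ kills the $\psi S$ summand outright (so the choice of $S$ never enters), and integrating by parts against $X^2=da$ and $XP-aY=d\sigma$ cancels the two $aYR$ terms and reduces everything to $aQ\psi-\sigma\widehat{e}$, where $aQ\psi=0$ and $\widehat{e}(\sigma)=-1$ are visibly independent of all remaining choices (including the $D_nB$-components of $P,Q,\sigma$, which die because $D_nB$ squares to zero). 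The two computations are consistent, since $[X]\cup[\widehat{X}]=\pm[\widehat{1}]$; what your organization buys is that the independence of the defining system is manifest rather than verified coefficient by coefficient. Two minor remarks. First, as the paper notes, completing the defining system actually forces the $\alpha,\gamma$-components of the primitive of $Y^2$ to vanish (since $\alpha\psi$, $\gamma\psi$ represent nonzero classes in $H^{n-3}$); your identity $aQ\psi=0$ happens to hold for all $\mu_i$, so this constraint is harmless for your argument, but "can be chosen" is indeed the most one can say there. Second, with the dual-basis convention taken literally, closedness of $\widehat{Ye}-\widehat{\alpha\gamma}$ against $dt_1=m-\alpha\gamma$ depends on whether the degree-$6$ monomial is ordered as $\alpha\gamma$ or $\gamma\alpha$; this sign ambiguity is inherited from the paper and affects neither proof.
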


\begin{proof}
For simplicity of signs, we assume $n$ is even, though the conclusion does not require this assumption. 
Choose $\widehat{Ye} - \widehat{\alpha \gamma}$ as a representative for its cohomology class; general primitives for the adjacent pairwise products are given by \begin{align*} XY &= d(b + k_1 \alpha + k_2 \gamma + d\varphi_1), \\ Y^2 &= d(c + k_3 \alpha + k_4 \gamma + d\varphi_2), \\ Y(\widehat{Ye} - \widehat{\alpha \gamma}) &= \widehat{e} = d(k_5 \widehat{Ya} - k_6\widehat{Xb} + d\varphi_3), \end{align*} where the $k_i$ are coefficients such that $k_5+k_6 = 1$, and $\varphi_1,\varphi_2\in (P_n B)^2$ and $\varphi_3\in (P_nB)^{n-6}$ are arbitrary. The $d\varphi_i$ terms can be chosen exact since $H^3(P_nB)=H^3(B)=\langle \alpha,\gamma\rangle$ and $H^{n-5}(P_nB)=0$.

The left triple product $\langle [X], [Y], [Y] \rangle$ is thus represented by $$(bY-Xc) + k_1Y\alpha + k_2Y\gamma - k_3X\alpha - k_4X\gamma + d(Y\varphi_1- X\varphi_2), $$ with general choice of primitive $$f + k_1s_{Y\alpha} + k_2s_{Y\gamma} - k_3s_{X\alpha} - k_4s_{X\gamma} + Y\varphi_1 - X\varphi_2 + d\eta,$$ where $\eta$ is a degree 3 element (where we use that $H^4(P_nB)=0$).

The right triple product $\langle [Y], [Y], [\widehat{Ye} - \widehat{\alpha \gamma}]\rangle$ is represented by $$(c+k_3\alpha + k_4\gamma + d\varphi_2)(\widehat{Ye} - \widehat{\alpha \gamma}) - Y(k_5\widehat{Ya} - k_6 \widehat{Xb} + d\varphi_3) = -k_3\widehat{\gamma} - k_4\widehat{\alpha} - k_5\widehat{a} +d(\varphi_2(\widehat{Ye}-\widehat{\alpha\gamma})-Y\varphi_3).$$

Since $H^{n-3}(P_nA)=\langle \widehat{\alpha},\widehat{\gamma}\rangle$, we see that necessarily $k_3 = k_4 = 0$ if the above expression is to be exact. A general primitive for the above representative of the right triple product is then given by $k_5\widehat{X^2} + d\varepsilon +\varphi_2(\widehat{Ye}-\widehat{\alpha\gamma})-Y\varphi_3$, where $\varepsilon$ is an element of degree $n-5$ (using that $H^{n-4}(P_nB)=0$). Thus, any representative of the quadruple product is of the form 
\begin{align*} (f+k_1s_{Y\alpha} &+ k_2s_{Y\gamma} +(Y\varphi_1-X\varphi_2) + d\eta)(\widehat{Ye} - \widehat{\alpha\gamma}) \\ &+ (b+k_1\alpha + k_2\gamma + d\varphi_1)(k_5\widehat{Ya} - k_6\widehat{Xb} + d\varphi_3) + X(k_5\widehat{X^2} - Y\varphi_3 + \varphi_2(\widehat{Ye}-\widehat{\alpha\gamma})+ d\varepsilon).\end{align*}

Note that $d\eta$ is the sum of an element in $D_n(B)$ and an element in the span of $X^2,Y^2,XY$ and $\varphi_1, \varphi_2$ are sums of elements in $D_n(B)$ and elements in the span of $X,Y$. Thus, since $X^2, XY, Y^2$ are exact, also $(d\eta)(\widehat{Ye} - \widehat{\alpha \gamma})$,  $Y\varphi_1(\widehat{Ye}-\widehat{\alpha\gamma})$,  $X\varphi_2(\widehat{Ye}-\widehat{\alpha\gamma})$, and $d\varphi_1(k_5\widehat{Ya}-k_6\widehat{Xb}+d\varphi_3)=d(\varphi_1d\varphi_3)$ are exact.

Hence, up to these exact terms, a general representative for the quadruple product is $$k_6 \widehat{X} + bd\varphi_3 + d((k_1\alpha + k_2\gamma)\varphi_3) + k_5 \widehat{X} - XY\varphi_3 + d(X\varepsilon),$$ which, since $k_5+k_6 = 1$, is cohomologous to $\widehat{X}$. \end{proof}

\begin{proof}[Proof of \Cref{Bnotformal}] This follows from the later \Cref{AformaliffPAformal}, which tells us that formality of $A$ would imply formality of $P_n A$, which contradicts the existence of a non-trivial quadruple Massey product noted above. \end{proof}

\section{Poincar\'e dualization for $A_{\infty}$-algebras}\label{inftyPDsection}

We now discuss how the Poincar\'e dualization extends to the category of $A_{\infty}$-algebras; we point the reader to \cite{keller}, \cite{LH}, \cite{lv} for detailed treatments of $A_\infty$ and $C_\infty$-algebras, along with $A_\infty$-bimodules. We will use the notation and conventions in \cite[Section 2]{MSZ23}.

\begin{rem}
   There are a priori different notions of quasi-isomorphisms when considering unital or non-unital algebras and morphisms and $A_\infty$ and $C_\infty$-algebras. However, two strictly unital $A_\infty$-algebras are quasi-isomorphic through strictly unital $A_\infty$-morphisms iff they are quasi-isomorphic through arbitrary $A_\infty$-morphisms. Furthermore, (resp. unital) dga's are weakly equivalent iff they are weakly equivalent as (resp. unital) $A_\infty$-algebras. The same statements holds for (resp. unital) cdga's and $C_\infty$-algebras. Furthermore, two (unital) cdga's (resp. $C_\infty$-algebras) are quasi-isomorphic as dga's iff they are quasi-isomorphic as (unital) dga's (resp. $A_\infty$-algebras). See \cite{HM12}, \cite{LH}, \cite{lv} and the discussion in \cite{CPRNW19}.
\end{rem}

Let $(A,m_*)$ be an $A_\infty$-algebra with finite-dimensional cohomology. Fix some $n$ and define $D_n A$ and $P_n A$ as in \Cref{poincaredualization}. We extend the definition of the $m_i$ to all of $P_n A$. Set $m_i(\bullet)=0$ whenever two or more inputs come from $D_n A$. For $\varphi\in (D_n A)^{n-k}=(A^k)^\vee$ we extend the definition of $m_i$ by setting \begin{equation}\label{formulaformi} m_i(x_1,\ldots,x_{l-1},\varphi,x_{l+1},\ldots,x_i)(b) = \varepsilon \cdot \varphi(m_i(x_{l+1},\ldots,x_i,b,x_1,\ldots, x_{l-1})) \end{equation}
for $x_j,b\in A$, where $\varepsilon = (-1)^{li+\Sigma_1^{l-1}\cdot(\Sigma_{l+1}^i+|b|+|\varphi|)+i|\varphi|}$ with $\Sigma_s^t:= \sum_{j=s}^t|x_j|$. 

This is precisely the formula that makes $D_n A$ an $A_\infty$-$A$-bimodule as in \cite[Lemma 2.9]{Tr08} (see the corrected sign in \cite{Tr11}). We then immediately have the following:

\begin{prop}\label{prop:Ainfty-dualization}
This defines an $A_\infty$-algebra structure on $P_n A$. \end{prop}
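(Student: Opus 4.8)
The plan is to verify directly that the extended operations $m_i$ on $P_nA = A \oplus D_nA$ satisfy the Stasheff identities $\sum (-1)^{?} m_{i}(x_1,\dots, m_j(x_s,\dots,x_{s+j-1}),\dots,x_i) = 0$. The key observation is that only three types of inputs need to be checked: (i) all inputs in $A$, where the identity holds because $(A,m_*)$ is itself an $A_\infty$-algebra; (ii) two or more inputs in $D_nA$, where every term vanishes by our convention $m_i(\bullet)=0$, so the identity is trivially $0=0$ (one should note here that in the Stasheff sum, an inner operation $m_j$ applied to a string containing at least one $D_nA$-factor either lands in $D_nA$ or is zero, so the composite still has $\geq 2$ factors from $D_nA$ unless the $m_j$ itself already swallows two of them and returns $0$); and (iii) exactly one input in $D_nA$, which is the only substantive case.

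For case (iii), the cleanest route is not to recompute signs by hand but to invoke the theory of $A_\infty$-bimodules. Recall that an $A_\infty$-bimodule structure on a complex $M$ over $(A,m_*)$ is precisely a family of maps $A^{\otimes p}\otimes M\otimes A^{\otimes q}\to M$ satisfying a version of the Stasheff identities, and that the square-zero extension $A\oplus M$ (with the bimodule actions as the mixed components of $m_i$ and zero when two or more inputs lie in $M$) is an $A_\infty$-algebra if and only if $M$ is an $A_\infty$-bimodule. This is a standard fact; see \cite{Tr08}. Thus the proposition reduces to the claim that the formula \eqref{formulaformi}, together with the underlying differential on $D_nA$ from \Cref{poincaredualization}, equips $D_nA$ with the structure of an $A_\infty$-$A$-bimodule. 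But this is exactly the content of \cite[Lemma 2.9]{Tr08} (with the sign correction of \cite{Tr11}): the linear dual of $A$, shifted appropriately, carries the $A_\infty$-bimodule structure given precisely by the cyclic-permutation formula \eqref{formulaformi}, with the sign $\varepsilon$ we have recorded. Hence $P_nA = A\oplus D_nA$ is an $A_\infty$-algebra, as claimed.

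Concretely, the steps are: first, state that checking the Stasheff identities on $P_nA$ splits according to the number of $D_nA$-inputs; second, dispatch the cases of $\geq 2$ and $0$ such inputs as above; third, identify the case of exactly one $D_nA$-input with the $A_\infty$-bimodule identities for $D_nA$; fourth, cite \cite[Lemma 2.9]{Tr08} and \cite{Tr11} to conclude that \eqref{formulaformi} does define such a bimodule structure. One should also remark that when all $m_i$ vanish for $i\geq 3$ and $A$ is a cdga, the formula \eqref{formulaformi} reduces to the multiplication defined in \Cref{poincaredualization} and the differential matches, so this is genuinely an extension of the earlier construction.

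The main obstacle is bookkeeping of signs: matching the sign convention $\varepsilon = (-1)^{li+\Sigma_1^{l-1}\cdot(\Sigma_{l+1}^i+|b|+|\varphi|)+i|\varphi|}$ used here with the (corrected) conventions of \cite{Tr08,Tr11}, and with the $A_\infty$ sign conventions of \cite[Section 2]{MSZ23}, is the only place where something could go wrong. Once one fixes a single consistent convention — say, the one where the bimodule identity is literally that of \cite{Tr11} — the identification in case (iii) is immediate and no further computation is needed. For this reason I would keep the proof of \Cref{prop:Ainfty-dualization} to essentially one line, as the authors have done, deferring the sign verification to the cited references.
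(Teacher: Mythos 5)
Your proposal is correct and follows essentially the same route as the paper: the text immediately preceding the proposition already observes that formula \eqref{formulaformi} is exactly the one making $D_nA$ an $A_\infty$-$A$-bimodule by \cite[Lemma 2.9]{Tr08} (with the sign correction of \cite{Tr11}), and the proposition then follows from the standard fact that the square-zero extension of an $A_\infty$-algebra by an $A_\infty$-bimodule is an $A_\infty$-algebra. Your more explicit case analysis by the number of $D_nA$-inputs is a faithful unpacking of that one-line argument.
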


\begin{lem}\label{mapfromPD} Let $A \xrightarrow{f} B$ be a map of $A_\infty$-algebras, and $D_n A \xrightarrow{\phi} B$ a map of $A_\infty$-$A$-bimodules, where $B$ obtains its bimodule structure from $f$ (see e.g. \cite[\S 2.4]{MSZ23}). Then $f$ extends to a map of $A_\infty$-algebras $P_n A \xrightarrow{F} B$. 
\end{lem}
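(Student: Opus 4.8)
\textbf{Proof plan for \Cref{mapfromPD}.}

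The plan is to build the extension $F\colon P_nA\to B$ by declaring its Taylor coefficients out of the mixed tensors, reducing everything to the statement that a map of $A_\infty$-algebras together with a map of $A_\infty$-bimodules into a (bimodule-over-itself) target assemble into a single $A_\infty$-algebra map out of the square-zero extension. Concretely, a map of $A_\infty$-algebras $P_nA\to B$ is a collection of multilinear maps $F_i\colon (P_nA)^{\otimes i}\to B$ satisfying the usual $A_\infty$-morphism relations. Since $P_nA=A\oplus D_nA$ with $D_nA$ a square-zero ideal (all $m_i$ vanish on two or more inputs from $D_nA$), I would set $F_i$ on a pure tensor to be: $f_i$ if all inputs lie in $A$; $\phi$-type coefficients (the structure maps of the bimodule morphism $\phi$, inserted at the unique $D_nA$-slot, with the appropriate Koszul sign as in \eqref{formulaformi}) if exactly one input lies in $D_nA$; and $0$ if two or more inputs lie in $D_nA$. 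This is well-defined because $B$ carries its $A$-bimodule structure via $f$, which is exactly the structure with respect to which $\phi$ is a bimodule map.

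The key steps, in order, would be: (1) recall the precise form of the $A_\infty$-morphism relations and of the $A_\infty$-bimodule-morphism relations (e.g.\ in the conventions of \cite{Tr08}, \cite{Tr11}, or \cite{MSZ23}); (2) observe that on tensors supported entirely in $A$ the morphism relations for $F$ are literally those for $f$, which hold by hypothesis; (3) on tensors with exactly one $D_nA$-entry, unwind the $A_\infty$-morphism relation for $F$ and check term-by-term that, after peeling off the dualization sign $\varepsilon$ and evaluating on a test element $b\in A$, it becomes precisely the defining relation for $\phi$ being a morphism of $A_\infty$-$A$-bimodules, where the bimodule actions on $B$ are the ones induced by $f$ and the bimodule operations on $D_nA$ are the ones from \eqref{formulaformi}; (4) on tensors with two or more $D_nA$-entries, note that every term in the morphism relation either has a factor $m_j$ or $F_j$ applied to $\ge 2$ elements of $D_nA$ (hence zero) or an outer operation $m_j^B$ applied to outputs at least two of which are evaluations against $D_nA$ — but such products vanish because $B$'s bimodule structure is square-zero only through... — more carefully, the relevant terms pair up or vanish for the same bookkeeping reason that makes $P_nA$ itself an $A_\infty$-algebra (\Cref{prop:Ainfty-dualization}), so this case carries over formally.

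The main obstacle I expect is step (3): matching the signs. The sign $\varepsilon$ in \eqref{formulaformi} is designed precisely so that $D_nA$ is an $A_\infty$-bimodule (citing the corrected sign of \cite{Tr11}), but one must verify that the \emph{same} sign convention makes the proposed $F_i$ satisfy the morphism relations, i.e.\ that the Koszul signs appearing when one rearranges inputs past the $D_nA$-slot in the morphism relation for $F$ agree with those appearing in the bimodule-morphism relation for $\phi$ after transport through the duality pairing $\varphi\mapsto\varphi(-)$. This is the kind of computation that is routine in principle but error-prone; I would handle it by working in the (co)bar/coalgebra picture — a morphism of $A_\infty$-algebras is a dg-coalgebra map $BP_nA\to BB$, and $BP_nA$ splits as an extension whose ``odd part'' is the cofree bimodule-type piece — so that the bimodule-morphism axiom for $\phi$ becomes exactly the restriction of the coalgebra-map condition for $F$ to that piece, making the sign check a single structural statement rather than a slotwise audit. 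Finally, I would note that if $f$ and $\phi$ are strictly unital (resp.\ $C_\infty$), then so is $F$, which is all that is needed for the applications to Poincar\'e dualization.
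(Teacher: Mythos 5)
Your construction of $F$ is the same as the paper's, and your steps (1)--(3) match its verification: on all-$A$ inputs the morphism relation for $F$ is literally that of $f$, and on inputs with exactly one entry from $D_nA$ every term on the right-hand side of the relation contains exactly one $\phi$-block surrounded by $f$-blocks, which is precisely the induced $A$-bimodule structure on $B$, so the relation reduces to the bimodule-morphism identity for $\phi$. Packaging the sign check via the bar construction, as you suggest, is a reasonable way to organize that part.

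The problem is your step (4), and your own sentence visibly breaks off exactly where the argument does. On an input with two or more entries from $D_nA$ the left-hand side of the morphism relation does vanish (every term has some $m^{P_nA}$ or some $F$ receiving at least two dual entries, and a block absorbing one dual entry outputs another dual entry, so the count never drops below two). But the right-hand side contains terms $m_r^B\bigl(F(\mathrm{block}_1),\dots,F(\mathrm{block}_r)\bigr)$ in which two \emph{distinct} blocks each contain exactly one dual entry; these contribute expressions $m_r^B(\dots,\phi(\dots),\dots,\phi(\dots),\dots)$ with two $\phi$-outputs. The bimodule-morphism equation for $\phi$ never involves two $\phi$'s in a single term, so it gives no control over these, and they do not vanish ``for the same bookkeeping reason'' as in \Cref{prop:Ainfty-dualization} --- there the target's dual summand is a square-zero ideal by fiat, whereas here the target is an arbitrary $B$. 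Concretely: take $A=\QQ$, $B=(\QQ[x],0)$ with $|x|=n$ even, $f$ the unit, and $\phi(\hat{1})=x$; this is a legitimate map of $A$-bimodules, yet the relation on $\hat{1}\otimes\hat{1}$ forces $x^2=0$ (and indeed no $A_\infty$-extension of any kind exists, since an extension would make $[\hat{1}]\mapsto[x]$ a ring map killing $[x]^2$). So your argument needs the additional input that the operations of $B$ vanish on two or more $\phi$-outputs (together with arbitrary $f$-outputs); this holds in the intended application \Cref{extendinfty}, where the target is $P_nB$ and $\phi$ lands in the square-zero ideal $D_nB$. Either add that vanishing as a hypothesis or restrict the verification to that situation; as written, step (4) does not close.
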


\begin{proof} On inputs all from $A$, we set $F_i = f_i$. If two or more inputs lie in $D_n A$ we set $F_i = 0$. For one input $\varphi$ in $D_n A$, we set $$F_i(a_1, \ldots, a_{j-1}, \varphi, a_{j+1}, \ldots a_i) = \phi_i(a_1, \ldots, a_{j-1}, \varphi, a_{j+1}, \ldots a_i).$$ That $F_i$ is a morphism of $A_\infty$-algebras follows from the corresponding equations for $f$ and the $A_\infty$-$A$-bimodule map $\phi$. \end{proof}

Since an $A_\infty$-$A$-bimodule morphism $B \to A$ induces a morphism of $A_\infty$-$A$-bimodules $D_n A \to D_n B$ (see e.g. \cite[p.9]{C08}), we obtain in particular the following generalization of \Cref{lem:naturality}:

\begin{cor}\label{extendinfty} Given an $A_\infty$-algebra morphism $f\colon A\rightarrow B$ and a morphism $r\colon B\rightarrow A$ of $A_\infty$-$A$-bimodules, we obtain an $A_\infty$-algebra morphism $P_n A \to P_n B$ extending $f$. \end{cor}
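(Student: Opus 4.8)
The plan is to deduce this directly from \Cref{mapfromPD}, applied with the target $A_\infty$-algebra $B$ there replaced by $P_n B$. The inclusion $B \hookrightarrow P_n B$ is a strict morphism of cdga's, hence of $A_\infty$-algebras (take $\iota_1$ the inclusion and $\iota_{\geq 2}=0$), so the composite $\tilde f \colon A \xrightarrow{f} B \hookrightarrow P_n B$ is an $A_\infty$-algebra morphism. To invoke \Cref{mapfromPD} it then remains to produce a morphism of $A_\infty$-$A$-bimodules $\phi\colon D_n A \to P_n B$, where $P_n B$ carries the bimodule structure obtained by restricting along $\tilde f$.

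To build $\phi$, first give $B$ the structure of an $A_\infty$-$A$-bimodule by restricting along $f$; with respect to this structure $r\colon B \to A$ is a morphism of $A_\infty$-$A$-bimodules. As recalled in the discussion preceding \Cref{prop:Ainfty-dualization}, Tradler's dual-bimodule construction (with the corrected signs) endows $D_n B$ with the structure of an $A_\infty$-$A$-bimodule, and this assignment is contravariantly functorial on morphisms of $A_\infty$-$A$-bimodules (\cite[p.9]{C08}); hence $r$ dualizes to a morphism of $A_\infty$-$A$-bimodules $D_n r\colon D_n A \to D_n B$. By the very definition of the higher products on $P_n B$, the subspace $D_n B \subseteq P_n B$ is a sub-$A_\infty$-$A$-bimodule (operations with two inputs in $D_n B$ vanish, and those with one such input are exactly the bimodule operations), and its $A$-action is the restriction along $\tilde f$. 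Thus $\phi := \bigl(D_n B \hookrightarrow P_n B\bigr) \circ D_n r$ is the desired morphism of $A_\infty$-$A$-bimodules.

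Applying \Cref{mapfromPD} to $\tilde f$ and $\phi$ now yields an $A_\infty$-algebra morphism $F\colon P_n A \to P_n B$ with $F_i = f_i$ on inputs all from $A$ and $F_i = \phi_i$ on tuples of inputs containing exactly one element of $D_n A$. In particular $F$ extends $f$, and since $\phi$ factors through $D_n B$ we in fact have $F = f \oplus D_n r$, recovering \Cref{lem:naturality} in the strict case. The only real subtlety is the identification that underlies the whole argument: that the $A_\infty$-$A$-bimodule structure on $D_n A$ (resp. $D_n B$) furnished by the dual-bimodule construction coincides with the one encoded by the higher products on $P_n A$ (resp. $P_n B$), together with keeping the sign conventions of \cite{Tr08}, \cite{Tr11}, \cite{C08} consistent with formula~\eqref{formulaformi}. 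This matching is precisely the point asserted just before \Cref{prop:Ainfty-dualization}, so no new computation is needed and the corollary follows formally.
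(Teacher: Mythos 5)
Your proposal is correct and follows the same route as the paper: the paper's proof is the one-liner ``apply \Cref{mapfromPD} with $B\oplus D_nB$ as the target $A_\infty$-algebra,'' relying on the already-stated fact that $r$ dualizes to an $A_\infty$-$A$-bimodule morphism $D_nA\to D_nB$. You have simply made explicit the intermediate identifications (the inclusion $B\hookrightarrow P_nB$, the sub-bimodule $D_nB\subseteq P_nB$, and the compatibility of the bimodule structures) that the paper leaves implicit.
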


\begin{proof} Apply \Cref{mapfromPD} with $B\oplus D_n B$ being the target $A_\infty$-algebra. \end{proof}

A nice feature of $A_\infty$-bimodules is the fact that the passage to the localization at quasi-isomorphisms can be achieved by passing to the quotient category with respect to a certain homotopy relation among morphisms (see \cite[p. 94]{LH}). In particular, quasi-isomorphisms are invertible up to homotopy. While we will not need to make the homotopy relation explicit, we will make use of the following consequence:

\begin{lem}\label{lem: quasi inverse}
    Let $A$ be an $A_\infty$-algebra. Every quasi-isomorphism $f\colon M\rightarrow N$ of $A_\infty$-$A$-bimodules admits a quasi-inverse, i.e.\ a morphism $N\rightarrow M$ which is inverse to $f$ on the level of cohomology.
\end{lem}

\begin{cor}\label{cor:qi-invariance}
If $A$ and $B$ are weakly equivalent (i.e. connected by a zigzag of quasi-isomorphisms), then $P_n A$ and $P_n B$ are weakly equivalent.
\end{cor}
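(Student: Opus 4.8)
The plan is to reduce to the case of a single quasi-isomorphism and then apply \Cref{extendinfty}.

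First I would invoke the standard fact—part of the reason for the remark opening this section—that two $A_\infty$-algebras are connected by a zigzag of quasi-isomorphisms if and only if there is a \emph{single} direct $A_\infty$-quasi-isomorphism between them, because $A_\infty$-quasi-isomorphisms are invertible up to homotopy (see \cite{LH} and the references cited above, with the usual unital/non-unital bookkeeping). Hence it suffices to prove: if $f\colon A\to B$ is an $A_\infty$-quasi-isomorphism, then $P_nA$ and $P_nB$ are weakly equivalent.

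Next I would observe that $f$, being an $A_\infty$-algebra morphism, is in particular a morphism of $A_\infty$-$A$-bimodules $A\to B$, where $B$ carries the bimodule structure induced by $f$; and since $f$ is a quasi-isomorphism of the underlying complexes, it is a quasi-isomorphism of $A_\infty$-$A$-bimodules. Quasi-isomorphisms of $A_\infty$-bimodules over a fixed $A_\infty$-algebra are again invertible up to homotopy, so there is a morphism $r\colon B\to A$ of $A_\infty$-$A$-bimodules with $H(r)=H(f)^{-1}$. Applying \Cref{extendinfty} to the pair $(f,r)$ produces an $A_\infty$-algebra morphism $F\colon P_nA\to P_nB$ extending $f$, whose linear part, by the construction in \Cref{mapfromPD}, is $F_1=f_1\oplus (D_nr)_1$ with respect to the decompositions $P_nA=A\oplus D_nA$ and $P_nB=B\oplus D_nB$.

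It then remains to check that $F$ is a quasi-isomorphism. Since $F_1$ is block-diagonal for these decompositions, so is the induced map on cohomology: on the $H(A)$-summand it is $H(f)$, an isomorphism; on the summand $H^k(D_nA)=(H^{n-k}(A))^\vee$ it is $H(D_nr)=(H(r))^\vee$, the dual of the isomorphism $H(r)$ and hence an isomorphism. Therefore $H(F)$ is an isomorphism, so $F$ is an $A_\infty$-quasi-isomorphism $P_nA\to P_nB$, which finishes the proof. I expect the main obstacle to be purely a matter of bookkeeping—keeping track of which homotopy category each morphism lives in (plain complexes, $A_\infty$-algebras, $A_\infty$-$A$-bimodules) and invoking invertibility up to homotopy in the correct one, together with the mild unital/non-unital compatibility flagged in the opening remark; the algebraic content (that $F_1$ is block diagonal and that $D_n(-)$ preserves cohomology isomorphisms) is immediate from the constructions already in place.
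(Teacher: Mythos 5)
Your proposal is correct and follows essentially the same route as the paper: take a quasi-inverse of $f$ in the category of $A_\infty$-$A$-bimodules, feed the pair into \Cref{extendinfty}, and observe that the resulting morphism $P_nA\to P_nB$ is a quasi-isomorphism because its linear part is block-diagonal with invertible blocks. The extra details you supply (collapsing the zigzag to a single $A_\infty$-quasi-isomorphism and spelling out the cohomology computation on each summand) are exactly the steps the paper leaves implicit.
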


\begin{proof} Given a quasi-isomorphism $A \to B$, we can find a quasi-inverse $B \to A$ of $A_\infty$-$A$-bimodules, by \Cref{lem: quasi inverse}. Then as in \Cref{extendinfty} we obtain a morphism $P_n A \to P_n B$ of $A_\infty$-algebras, which is a quasi-isomorphism by construction.
\end{proof}

\begin{cor} If $A$ and $B$ are weakly equivalent dga's, then $P_n A$ and $P_n B$ are weakly equivalent (also as dga's). If $A$ and $B$ are furthermore cdga's, then $P_n A$ and $P_n B$ are weakly equivalent as cdga's by \cite[3.2--3.4]{CPRNW19}. \end{cor}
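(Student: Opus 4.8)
The plan is to deduce this from \Cref{cor:qi-invariance} together with the comparison results between the homotopy categories of (c)dga's and $A_\infty$/$C_\infty$-algebras recalled at the beginning of this section. The first step is to observe that if $A$ is a dga, regarded as an $A_\infty$-algebra with $m_i=0$ for $i\geq 3$, then its Poincar\'e dualization $P_NA$ is again an honest dga. Indeed, inspecting \eqref{formulaformi}: products with two or more inputs from $D_NA$ are zero by construction; if all inputs lie in $A$, the operation is $m_i^A$, which vanishes for $i\geq 3$; and if one input is some $\varphi\in D_NA$, the right-hand side of \eqref{formulaformi} is $\varphi$ applied to an $m_i$ of elements of $A$, again zero for $i\geq 3$. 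Hence $P_NA$ carries only a differential $m_1$ and a product $m_2$, i.e.\ it is a dga, and likewise for $P_NB$.

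Next, \Cref{cor:qi-invariance} shows that the $A_\infty$-algebras $P_NA$ and $P_NB$ are weakly equivalent. Since both are genuine dga's, and since two dga's are weakly equivalent if and only if they are weakly equivalent as $A_\infty$-algebras (see \cite{HM12}, \cite{LH}, \cite{lv}, and the remark opening this section), it follows that $P_NA$ and $P_NB$ are weakly equivalent as dga's. This proves the first assertion.

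For the second assertion I would first record that, when $A$ and $B$ are cdga's, so are $P_NA$ and $P_NB$. By the vanishing of $m_{\geq 3}$ just observed, the $A_\infty$-structure on $P_NA$ is determined by $m_1$ and $m_2$, and a short sign computation identifies the $m_2$ coming from \eqref{formulaformi} with the graded-commutative multiplication on $P_NA=A\oplus D_NA$ introduced in \Cref{poincaredualization} (one matches $\varphi(m_2(b,a))$, up to the global $A_\infty$ sign in \eqref{formulaformi}, with $(-1)^{|a||\varphi|}\varphi(a\wedge b)$ using graded commutativity of $A$, and similarly for the product with $a$ on the left). Thus $P_NA$ and $P_NB$ are cdga's, weakly equivalent as dga's by the first part, and \cite[3.2--3.4]{CPRNW19} upgrades this to a weak equivalence of cdga's.

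The only step that requires genuine work is the sign check in the last paragraph: confirming that \eqref{formulaformi} specializes, for a cdga input, exactly to the $A$-bimodule (hence cdga) structure used in \Cref{poincaredualization}, rather than producing a strictly different (though quasi-isomorphic) $C_\infty$-structure. Once this identification is in hand, the corollary is a formal consequence of \Cref{cor:qi-invariance} and the cited equivalences of homotopy categories.
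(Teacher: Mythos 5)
Your proposal is correct and follows exactly the route the paper intends: the paper gives this corollary no separate proof, treating it as an immediate consequence of \Cref{cor:qi-invariance} together with the equivalences of homotopy categories recalled in the remark opening \Cref{inftyPDsection}, plus the cited result of \cite{CPRNW19} for the commutative upgrade. The details you supply — that $P_NA$ of a dga is again a dga because \eqref{formulaformi} vanishes for $i\geq 3$ once $m^A_{\geq 3}=0$, and that the $m_2$ of \eqref{formulaformi} specializes to the product of \Cref{poincaredualization} (your sign check for $l=1,2$ is right) — are precisely the implicit verifications the paper relies on.
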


From here we immediately obtain:

\begin{cor}\label{AformaliffPAformal} If the cdga (or dga) $A$ is formal, then $P_nA$ is formal. Combining this with \Cref{formaldualization}, we have that $A$ is formal if and only if $P_n A$ is formal. \end{cor}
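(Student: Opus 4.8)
The plan is to read this off from \Cref{cor:qi-invariance} (and its cdga refinement) together with \Cref{formaldualization}; the only thing that genuinely needs an argument is that Poincar\'e dualizing a formal model produces a cdga with zero differential.

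First I would unwind formality: $A$ formal means $A$ is connected to $(H(A),0)$ by a zigzag of quasi-isomorphisms of cdga's. Feeding this zigzag into the statement that $P_n$ sends weakly equivalent cdga's to weakly equivalent cdga's (the corollary preceding \Cref{AformaliffPAformal}, which rests on \Cref{cor:qi-invariance} and \cite{CPRNW19}), we obtain that $P_n A$ is weakly equivalent, as a cdga, to $P_n(H(A),0)$.

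Next I would check that $P_n(H(A),0)=(H(P_nA),0)$. On the one hand, in $P_n(H(A),0)$ the differential on the summand $D_n(H(A))$ is dual to the zero differential on $H(A)$, hence zero, while the differential on the $H(A)$ summand is literally zero and the products are dual to products in $H(A)$; so no differential is ever introduced and $P_n(H(A),0)$ has zero differential. On the other hand, the graded-vector-space identity $H^k(P_nA)=H^k(A)\oplus (H^{n-k}(A))^\vee$ underlies a graded-algebra isomorphism $H(P_nA)\cong P_n(H(A))$, and a short check — that the induced product on $H(P_nA)$ of a class from $H(A)$ with one from $D_n(H(A))$ is computed by exactly the defining evaluation formula of the dualization — shows this is an isomorphism of cdga's once $P_n(H(A))$ is equipped with the zero differential. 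Hence $P_nA$ is weakly equivalent, as a cdga, to $(H(P_nA),0)$; that is, $P_nA$ is formal. The dga case is identical with the $A_\infty$ versions of the cited statements replacing the $C_\infty$ ones, and the converse is \Cref{formaldualization}, which gives the ``if and only if''.

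I do not expect a real obstacle here: the substantive work — that $P_n$ preserves $A_\infty$- and $C_\infty$-quasi-isomorphisms, and that the various notions of formality (as a dga, cdga, $A_\infty$- or $C_\infty$-algebra) agree — has already been carried out in \Cref{cor:qi-invariance} and the references collected in the opening remark of this section. The only mild bookkeeping point is to stay within the category of (unital) cdga's throughout, which that remark and \cite{CPRNW19} guarantee.
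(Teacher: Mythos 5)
Your proposal is correct and follows essentially the same route as the paper: the paper deduces the corollary directly from the quasi-isomorphism invariance of $P_n$ (\Cref{cor:qi-invariance} and its cdga refinement via \cite{CPRNW19}) applied to the zigzag connecting $A$ to $(H(A),0)$, with the observation that $P_n(H(A),0)$ carries zero differential left implicit. Your explicit verification that $P_n(H(A),0)\cong (H(P_nA),0)$ as cdga's is exactly the "immediate" step the paper omits.
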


We do not know how to prove this using cdga's only, as the Poincar\'e dualization is functorial only for maps with a bimodule retract. Not every quasi-isomorphism has a quasi-inverse in the category of dg-bimodules but it does in $A_\infty$-bimodules.

\section{A non-formal non-zero degree map}\label{nonformalmapsection}

Let $Ho=Ho(cdga)$ be the homotopy category of cdga's, i.e. the localization at all quasi-isomorphisms. Recall that a cdga $A$ is formal if and only if it is isomorphic to $H(A)$ in $Ho$. There is a natural generalization of this condition to maps as follows: Let \[
I:=\{\bullet_1\longrightarrow \bullet_2\}\]
be a category with two objects and one morphism between them. Then let $Ho^I:=Fun(I,Ho)$ be the functor category, i.e. objects are maps $[A\to B]$ in the homotopy category and morphisms are commutative squares in $Ho$.
For every map $f:A\to B$, one has two natural objects of $Ho^I$: 
\[[f]:=[f:A\to B]\text{ and }[H(f)]:=[H(f):H(A)\to H(B)].\] In analogy with the case of objects, we say:
\begin{defi}
   A map $f$ of cdga's is called formal if $[f]\cong [H(f)]$ in $Ho^I$.
\end{defi}
For example, a cdga is formal if and only if the identity map is formal. Note that formality of $f$ implies formality of source and target. We argue that this notion coincides with several existing notions of formality of maps (which are known to be equivalent) in the literature, and give a characterization in terms of $C_\infty$-algebras that we will use later on.

Recall that for any cofibrant object $C$ in the category of cdga's (in the projective model structure, e.g.\ the cdga underlying a minimal model), morphisms $[C,A]$ in $Ho$ can be identified with homotopy classes of maps, and there is a ``lifting property'' saying that for any quasi-isomorphism $A\to B$, composition induces an isomorphism $[C,A]\cong [C,B]$. In particular, for any choice of minimal models $M_A\to A$, $M_B\to B$ and map $f:A\to B$, there exists a unique homotopy class of maps $M_A\xrightarrow{M_f} M_B$ making the following diagram commute up to homotopy:

\begin{equation}\begin{tikzcd}\label{induced map}
A\ar[d,"f"]&M_A\ar[l]\ar[d, "M_f"]\\
B&M_B\ar[l]
\end{tikzcd}
\end{equation}
\begin{lem}\label{formalmap}
    Let $f:A\to B$ be a map of connected cdga's. The following conditions are equivalent:
    \begin{enumerate}[(a)]
        \item\label{itf: formal} The map $f$ is formal.
        \item\label{itf: long diagram} There is a diagram 
        $$\begin{tikzcd}
A \arrow[d, "f"'] & A_1 \arrow[l, "\sim"'] \arrow[r, "\sim"] \arrow[d] & \cdots & A_r \arrow[l, "\sim"'] \arrow[d] \arrow[r, "\sim"] & H(A) \arrow[d, "H(f)"] \\
B                 & B_1 \arrow[l, "\sim"'] \arrow[r, "\sim"]           & \cdots & B_r \arrow[l, "\sim"'] \arrow[r, "\sim"]           & H(B)                \end{tikzcd}$$
in the category of cdga's, which commutes up to homotopy\footnote{We can take left homotopy or right homotopy. For non-cofibrant sources of morphisms, these notions differ (and are generally not even equivalence relations), but by the equivalence with \ref{itf: diagram}, either choice works.},where the horizontal morphisms are quasi-isomorphisms \cite[(2.4)]{S23}.
        \item\label{itf: diagram}
Let $M_A \xrightarrow{\sim} A$ and $M_B \xrightarrow{\sim} B$ be minimal models. Then there is a diagram 
$$\begin{tikzcd}
A \arrow[d, "f"'] & M_A \arrow[l, "\sim"'] \arrow[r, "\sim"] \arrow[d] & H(A) \arrow[d, "H(f)"] \\
B                 & M_B \arrow[l, "\sim"'] \arrow[r, "\sim"]           & H(B)                  
\end{tikzcd}$$
in the category of cdga's, which commutes up to homotopy \cite[(4.2)]{S23}.
        \item\label{itf: short diagram} 

Let $M_A \xrightarrow{\sim} A$ and $M_B \xrightarrow{\sim} B$ be minimal models. Then there is a diagram 
$$\begin{tikzcd}
M_A \arrow[r, "\sim"] \arrow[d, "M_f"'] & H(A) \arrow[d, "H(f)"] \\
M_B \arrow[r, "\sim"]           & H(B)                  
\end{tikzcd}$$
in the category of cdga's, which commutes up to homotopy \cite[p. 260]{DGMS75}.
        \item\label{itf: Cinfty}
        Consider $H(A)$ and $H(B)$ as $C_\infty$-algebras with trivial higher operations. Then there are strictly unital $C_\infty$-algebra quasi-isomorphisms $M_A \rightarrow H(A)$ and $M_B \rightarrow H(B)$ such that the diagram $$\begin{tikzcd}
M_A \arrow[d, "M_f"'] \arrow[r, "\sim"]& H(A)  \arrow[d, "H(f)"]  \\
M_B  \arrow[r, "\sim"]                                   & H(B) 
\end{tikzcd}$$ in the category of strictly unital $C_\infty$-algebras commutes up to homotopy. Here $H(f)$ is the induced morphism on cohomology, with trivial higher components.
    
    \end{enumerate}
\end{lem}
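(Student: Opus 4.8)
The plan is to prove the cycle of equivalences
\ref{itf: formal} $\Leftrightarrow$ \ref{itf: long diagram} $\Leftrightarrow$ \ref{itf: diagram} $\Leftrightarrow$ \ref{itf: short diagram} $\Leftrightarrow$ \ref{itf: Cinfty}, the backbone being standard model-categorical manipulations in $Ho = Ho(cdga)$ together with the cofibrancy of minimal models. First I would recall that a morphism in $Ho^I$ from $[f]$ to $[H(f)]$ is, by definition of the functor category, a commuting square in $Ho$; since an isomorphism in $Ho^I$ is just a square whose vertical arrows are isomorphisms in $Ho$, and since every zigzag of quasi-isomorphisms represents an isomorphism in $Ho$, the content of \ref{itf: formal} is exactly the existence of a square in $Ho$ with vertical legs $f$ and $H(f)$ and horizontal legs isomorphisms. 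Spelling out such an isomorphism in $Ho$ as a zigzag of honest cdga quasi-isomorphisms, and using that a commuting square in $Ho$ can always be rectified to a zigzag of squares in $cdga$ commuting up to homotopy (by taking cofibrant replacements of the sources and lifting), gives \ref{itf: formal} $\Rightarrow$ \ref{itf: long diagram}; the converse is immediate since each horizontal quasi-isomorphism is an isomorphism in $Ho$.

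Next, \ref{itf: long diagram} $\Leftrightarrow$ \ref{itf: diagram} $\Leftrightarrow$ \ref{itf: short diagram}. The implications \ref{itf: diagram} $\Rightarrow$ \ref{itf: long diagram} and \ref{itf: diagram} $\Rightarrow$ \ref{itf: short diagram} are trivial (a two-step zigzag is a zigzag; delete the outer objects and compose). For \ref{itf: long diagram} $\Rightarrow$ \ref{itf: diagram} and \ref{itf: short diagram} $\Rightarrow$ \ref{itf: diagram} I would use the lifting property recalled just before the lemma: $M_A$ and $M_B$ are cofibrant, so for any cdga $A'$ quasi-isomorphic to $A$ (resp.\ $B$) the set of homotopy classes $[M_A, A']$ is in canonical bijection with $[M_A, A]$; hence any long zigzag of squares, or any square on minimal models alone, can be compressed into the unique homotopy class of maps $M_A \xrightarrow{M_f} M_B$ of diagram \eqref{induced map} and reassembled into the three-term diagram of \ref{itf: diagram}. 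Concretely, given the data of \ref{itf: short diagram}, one attaches the structure maps $M_A \to A$, $M_B \to B$ and checks the left square commutes up to homotopy by uniqueness of $M_f$; given \ref{itf: long diagram}, one inductively lifts along the zigzag, each lift unique up to homotopy, to obtain the compatible square on minimal models and then the cohomology square.

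The only genuinely non-formal step is \ref{itf: short diagram} $\Leftrightarrow$ \ref{itf: Cinfty}, which is where the $C_\infty$-language must be invoked. Here I would appeal to the transfer theorem: a minimal Sullivan model $M_A = (\Lambda V, d)$ carries, via homotopy transfer along a contraction onto $H(A)$, a minimal $C_\infty$-structure on $H(A)$ together with a $C_\infty$-quasi-isomorphism $M_A \to H(A)$ that is the identity on cohomology, and (for cdga's over $\QQ$) strict unitality can be arranged. The point is that a \emph{strict} cdga map $M_A \to H(A)$ commuting up to cdga-homotopy with the transferred structure exists if and only if the transferred $C_\infty$-structure is $C_\infty$-isomorphic to the trivial one — i.e.\ if and only if there is a $C_\infty$-quasi-isomorphism $M_A \to (H(A), m_2)$ landing in the \emph{formal} $C_\infty$-algebra — and the analogous relative statement for the square. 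In other words, the translation rests on the dictionary ``strict cdga map up to homotopy $\leftrightarrow$ $C_\infty$-map'' provided by the rectification/strictification results for $C_\infty$-algebras cited in the excerpt \cite{lv}, \cite{LH}, \cite{CPRNW19}; I would carry it out by taking the $C_\infty$-quasi-isomorphisms $M_A \to H(A)$, $M_B \to H(B)$ and observing that a homotopy-commutative square of cdga's, after strictifying, yields a homotopy-commutative square of $C_\infty$-algebras and vice versa, using that the homotopy relation on $C_\infty$-morphisms restricts correctly to strict cdga maps between a minimal model and its cohomology. I expect this last equivalence to be the main obstacle, precisely because it requires carefully matching the notion of homotopy of cdga maps with homotopy of $C_\infty$-morphisms (and tracking strict unitality), rather than any new idea; everything else is a bookkeeping exercise with the lifting property in the projective model structure on cdga's.
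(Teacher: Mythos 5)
Your proposal is correct and follows essentially the same route as the paper: the cdga-level equivalences \ref{itf: formal}$\Leftrightarrow$\ref{itf: long diagram}$\Leftrightarrow$\ref{itf: diagram}$\Leftrightarrow$\ref{itf: short diagram} via the lifting property for cofibrant minimal models and the uniqueness of $M_f$, and \ref{itf: short diagram}$\Leftrightarrow$\ref{itf: Cinfty} by strictifying the $C_\infty$-quasi-isomorphisms to cdga morphisms using cofibrancy and the equivalence of homotopy categories. The only cosmetic difference is your detour through homotopy transfer in the last step (not needed, since \ref{itf: Cinfty} directly posits $C_\infty$-maps to the trivial structure) and that the paper resolves the unitality issue you flag by passing to augmentation ideals, i.e.\ working with $M_A^+$, $M_B^+$ in the non-unital category and extending back unitally in degree $0$.
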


\begin{proof}

Clearly, \ref{itf: diagram} $\Rightarrow$ \ref{itf: short diagram}, \ref{itf: long diagram}, \ref{itf: formal} and by definition of $M_f$, \ref{itf: short diagram}$\Rightarrow$\ref{itf: diagram}. Using the lifting property, one sees \ref{itf: long diagram}$\Rightarrow$\ref{itf: diagram}. For \ref{itf: formal}$\Rightarrow$\ref{itf: diagram}, we note that by \eqref{induced map} one is reduced to finding maps $M_A\to H(A)$ and $M_B\to H(B)$ making the right diagram commute up to homotopy. By assumption, we find these in the homotopy category (i.e. zigzags of maps). Now one applies the lifting property repeatedly.

The implication \ref{itf: short diagram}$\Rightarrow$\ref{itf: Cinfty} is immediate. Conversely, assuming \ref{itf: Cinfty}, the horizontal arrows in the homotopy category of $C_\infty$-algebras are represented by cdga morphisms and \ref{itf: short diagram} follows. To see this using only the homotopy theory of non-unital $C_\infty$-algebras, one may use the fact that all objects in the diagram are canonically augmented and arrows respect these augmentations due to cohomological connectedness and strict unitality by arguing as follows: we pass to the non-unital category by projecting onto positive degrees, invoke the fact that the inclusion of non-unital cdga's into $C_\infty$-algebras induces an equivalence on the homotopy categories, find a cdga representative of the arrows by using that $M_A^+$, $M_B^+$ are cofibrant as non-unital cdga's and pass back to the unital category by extending the maps unitally to degree $0$.
\end{proof}

\begin{rem} Analogously, one defines formality for any diagram in the category of cdga's. Note that there is potential ambiguity in what one might mean by formality of a quasi-isomorphism $A\rightarrow A$ of a cdga $A$. If we consider the automorphism as an object in the functor category $Ho^I$, then it is easy to see that it is formal if and only if the cdga itself is formal. However, one can also consider the automorphism as an object of the functor category $Ho^{C_2}$, where $$C_2 := \{\bullet \rcirclearrowleft\}$$ is the category with one object and one non-trivial morphism, in which case formality of the morphism is a non-trivial condition beyond just formality of the cdga. 

\end{rem}

Now we construct a dominant non-formal map between formal Poincar\'e duality cdgas. Consider the formal cdgas $A=(H(S^2), d = 0)$ and $B=(H(S^2\vee (S^3\times S^4)), d= 0)$, which as vector spaces are given by $A =\langle 1,\alpha\rangle_\mathbb{Q}$ and $B=H(S^2\vee (S^3\times S^4))=\langle 1, \alpha, \beta,\gamma, \beta\gamma\rangle_\mathbb{Q}$ with $\alpha,\beta,\gamma$ of degree $2,3,4$ respectively. Between these we have the strictly unital $C_\infty$-morphism $f\colon A\rightarrow B$ with $f_1(\alpha)=\alpha$, $f_2(\alpha\otimes\alpha)=\beta$ and $f_k=0$ for $k\geq 3$.

Now in order to pass to dualizations we define the retract map $r\colon B\rightarrow A$ as the $A_\infty$-bimodule morphism $r$ with $r_1(\alpha)=\alpha$, $r_1(\beta) = r_1(\gamma) = r_1(\beta \gamma) = 0$, and vanishing higher components. Thus for $n\in \mathbb{N}$ we obtain an extension $f\colon P_n A\rightarrow P_n B$ by dualizing $r$ as in \Cref{extendinfty}. Using that $r$ has no higher components one quickly checks that the extension is again a strictly unital $C_\infty$-morphism.

\begin{prop}\label{prop:automorphism2ary}
    If $n\geq 8$, there do not exist $A_\infty$-automorphisms $\varphi$ of $P_n A$ and $\psi$ of $P_n B$ such that $(\psi \circ f\circ \varphi)_2$ vanishes on $\alpha \otimes \alpha$.
\end{prop}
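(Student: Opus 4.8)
The plan is to assume such automorphisms $\varphi$ and $\psi$ exist and derive a contradiction by pairing the (hypothetically vanishing) class $(\psi\circ f\circ\varphi)_2(\alpha\otimes\alpha)$ against the nonzero product $\beta\cdot\gamma$ in $P_nB$. First I would collect the structure that is needed. For $n\geq 8$ the Poincar\'e dualization of a cdga with vanishing higher operations is again a cdga with vanishing higher operations, so $P_nA$ and $P_nB$ are cdga's with zero differential; concretely $(P_nA)^2=\langle\alpha\rangle$ and $(P_nA)^3=0$, while in $P_nB$ one has $\alpha^2=\alpha\beta=\alpha\gamma=0$, the product $\beta\gamma$ is nonzero in degree $7$, the summand $D_nB$ is a square-zero ideal, and $\alpha\cdot B^{\geq 1}=0$. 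Since the differentials vanish, the arity-$2$ equation for an $A_\infty$-morphism between $P_nA$ and $P_nB$ forces its linear part to be a morphism of graded algebras; as $\varphi$ and $\psi$ are invertible, $\varphi_1$ and $\psi_1$ are graded algebra automorphisms. In particular $\varphi_1(\alpha)=\lambda\alpha$ for some $\lambda\neq 0$, because $(P_nA)^2$ is one-dimensional.

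Next I would expand $(\psi\circ f\circ\varphi)_2(\alpha\otimes\alpha)$ with the composition formula for $A_\infty$-morphisms. Since $(P_nA)^3=0$ we have $\varphi_2(\alpha\otimes\alpha)=0$, hence $(f\circ\varphi)_2(\alpha\otimes\alpha)=f_2(\lambda\alpha\otimes\lambda\alpha)=\lambda^2\beta$ and $(f\circ\varphi)_1(\alpha)=\lambda\alpha$, so
$$(\psi\circ f\circ\varphi)_2(\alpha\otimes\alpha)=\lambda^2\bigl(\psi_1(\beta)\pm\psi_2(\alpha\otimes\alpha)\bigr).$$
If this vanishes, then, using $\lambda\neq 0$, we get $\psi_1(\beta)=\mp\psi_2(\alpha\otimes\alpha)$. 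I would then multiply by $\psi_1(\gamma)$ and invoke the arity-$3$ $A_\infty$-morphism equation for $\psi$ on the triple $(\gamma,\alpha,\alpha)$; since $m_1=m_3=0$ on both sides and $\gamma\alpha=\alpha^2=0$, this collapses to $\psi_1(\gamma)\,\psi_2(\alpha\otimes\alpha)=\mp\psi_2(\gamma\otimes\alpha)\,\psi_1(\alpha)$. Its right-hand side vanishes: $\psi_2(\gamma\otimes\alpha)$ has degree $5$, hence lies in $D_nB$ since $B^5=0$; the $D_nB$-component of $\psi_1(\alpha)$ contributes nothing because $D_nB$ squares to zero, while the product of the dual class $\psi_2(\gamma\otimes\alpha)$ with $\alpha$ is the functional $b\mapsto\pm\psi_2(\gamma\otimes\alpha)(\alpha\wedge b)$ on $B^{n-7}\subseteq B^{\geq 1}$, which is zero since $n-7\geq 1$ forces $\alpha\wedge b=0$. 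Therefore $\psi_1(\gamma)\,\psi_2(\alpha\otimes\alpha)=0$, and so $\psi_1(\beta\gamma)=\pm\psi_1(\gamma)\,\psi_1(\beta)=\mp\psi_1(\gamma)\,\psi_2(\alpha\otimes\alpha)=0$, contradicting injectivity of $\psi_1$ since $\beta\gamma\neq 0$.

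The genuinely structural point is to organize the argument so that nothing about the higher components $\varphi_{\geq 3}$, $\psi_{\geq 3}$ is ever used; this works because in the arity-$3$ morphism equation every $\psi_3$-term is weighted by $m_1$ or $m_3$, both of which vanish here, and $\varphi$ only enters through the harmless scalar $\lambda^2$. The remaining work is routine: bookkeeping the signs in the composition and morphism formulas, which the argument is insensitive to since it only uses that certain terms are nonzero and others vanish, and confirming the shape of $P_nB$ in low degrees, where for $n\in\{8,9,12\}$ extra dual generators appear in degrees $2$ and $5$ but do not affect the displayed vanishing (the degree-$2$ ones multiply the degree-$5$ class $\psi_2(\gamma\otimes\alpha)$ trivially inside the square-zero ideal $D_nB$). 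In effect the proposition says that no $A_\infty$-automorphism of $P_nB$ can absorb $\psi_1(\beta)$ into its quadratic part, the obstruction being the surviving product $\beta\cdot\gamma$.
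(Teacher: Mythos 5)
Your proposal is correct and follows essentially the same route as the paper's proof: reduce to the linear and quadratic parts via the composition formula (using $(P_nA)^3=0$ to kill $\varphi_2(\alpha\otimes\alpha)$), deduce $\psi_2(\alpha\otimes\alpha)=\mp\psi_1(\beta)$, and then apply the arity-$3$ morphism equation to $\alpha,\alpha,\gamma$ together with the vanishing of multiplication by $\alpha$ in degrees below $n-2$ to contradict $\beta\gamma\neq 0$. The only differences are cosmetic (you phrase the final contradiction via injectivity of $\psi_1$ on $\beta\gamma$ rather than via $\psi_1(\beta)=c\beta$, $\psi_1(\gamma)=c'\gamma$ directly).
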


\begin{proof}
     First note that any automorphism $\varphi$ of $P_n A$ satisfies $\varphi_2(\alpha\otimes \alpha)=0$  and that $\varphi_1(\alpha)$ is a non-zero multiple of $\alpha$. Hence it suffices to show that $(\psi\circ f)_2(\alpha \otimes \alpha) \neq 0$ for any automorphism $\psi$ of $P_n B$. Note that $\psi_1(\beta) = c \beta$ for some non-zero rational number $c$. Assuming on the contrary that there is an automorphism $\psi$ of $P_n B$ such that $(\psi \circ f)_2(\alpha \otimes \alpha) = 0$, we have
    \[0=(\psi\circ f)_2(\alpha\otimes\alpha)=\psi_1(f_2(\alpha\otimes \alpha))\pm \psi_2(f_1(\alpha)\otimes f_1(\alpha))=c\beta\pm\psi_2(\alpha \otimes\alpha).\] Since $m_2(\alpha\otimes \alpha)=0=m_2(\alpha\otimes \gamma)$, the morphism equation for $\psi$ applied to $\alpha\otimes\alpha\otimes \gamma$ yields
    \[0=m_2(\psi_2(\alpha\otimes\alpha)\otimes\psi_1(\gamma)) \pm m_2(\psi_1(\alpha)\otimes\psi_2(\alpha\otimes\gamma))=\pm m_2(c \beta \otimes \psi_1(\gamma)) \pm m_2(\psi_1(\alpha)\otimes\psi_2(\alpha\otimes\gamma)).\] Note that $\psi_1(\alpha)$ is a non-zero scalar multiple of $\alpha$, and $\psi_1(\gamma)$ is a non-zero scalar multiple of $\gamma$. Further note that multiplication with $\alpha$ is trivial on $B$ in positive degrees, and hence multiplication with $\alpha$ is trivial on $P_n B$ in degrees below $n-2$. As $\psi_2(\alpha\otimes\gamma)$ has degree $5$, it follows that $m_2(\alpha\otimes\psi_2(\alpha\otimes\gamma))=0$, leaving us with the contradiction $0=m_2(\beta\otimes \gamma)$.
\end{proof}

\begin{lem}\label{lem:homotopy2ary}
    If $g,g'\colon P_n A\rightarrow P_n B$ are homotopic $A_\infty$-morphisms (see \cite[1.2.1.7]{LH}), then $g_2(\alpha\otimes\alpha)=g_2'(\alpha\otimes\alpha)$.
\end{lem}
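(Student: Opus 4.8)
The statement is a standard ``homotopy invariance of low-order Taylor coefficients'' fact, specialized to the class $\alpha \otimes \alpha$, and the plan is to unwind the definition of a homotopy of $A_\infty$-morphisms and extract the degree-$6$ piece landing on the element $\widehat{1} \in P_n B$ (equivalently, the component of $g_2 - g_2'$ that can be detected in $H(P_n B)$ in degree $4$). First I would recall that a homotopy between $g$ and $g'$ is given by an $A_\infty$-morphism $P_n A \to P_n B$ into the path object (or, equivalently, by a collection of maps $h_i \colon (P_n A)^{\otimes i} \to P_n B$ of degree $-i$ satisfying the standard homotopy relations, see \cite[1.2.1.7]{LH}). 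The first relation reads $g_1 - g_1' = m_1 h_1 + h_1 m_1$, and the second relation expresses $g_2 - g_2'$ in terms of $m_1 h_2$, $h_2$ composed with $m_2$, $h_1$ composed with $m_2$, and $m_2$ applied to $g_1 \otimes h_1$ and $h_1 \otimes g_1'$ (the precise signs are irrelevant for the argument).

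The key step is to evaluate that second relation on $\alpha \otimes \alpha$ and observe that every term on the right-hand side vanishes for degree and connectivity reasons when $n \geq 8$. The input $\alpha \otimes \alpha$ has degree $4$, so $g_2(\alpha\otimes\alpha)$ and $g_2'(\alpha\otimes\alpha)$ have degree $4$, which since $n \geq 8$ lies strictly below $n-2$ and below the dual range; in this degree $P_n B$ agrees with $B$, which is $\langle 1, \alpha, \beta, \gamma, \beta\gamma\rangle$ with nothing in degree $4$ except $\gamma$ — but more to the point, the relevant terms are: (i) $h_1(m_2(\alpha,\alpha)) = h_1(0) = 0$ since $m_2(\alpha\otimes\alpha) = 0$ in $P_n A = H(S^2) \oplus D_n H(S^2)$; (ii) $m_2(g_1\alpha, h_1\alpha)$ and $m_2(h_1\alpha, g_1'\alpha)$ — here $h_1 \alpha$ has degree $1$, so it lands in $(P_n B)^1 = 0$ (as $B$ and its $n$-dual are concentrated in degrees $0$ and $\geq 2$ for $n \geq 8$), hence $h_1\alpha = 0$; (iii) $m_1 h_2(\alpha,\alpha) = 0$ since $P_n B$ has zero differential; and (iv) the terms involving $h_2$ precomposed with $m_2(\alpha,\alpha) = 0$. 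Thus the whole right-hand side vanishes and $g_2(\alpha\otimes\alpha) = g_2'(\alpha\otimes\alpha)$.

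The main obstacle, such as it is, is bookkeeping: writing down the homotopy relation in the sign convention of \cite{MSZ23}/\cite{LH} used in the paper and making sure no term has been overlooked (in particular the term $h_1 m_2(\alpha,\alpha)$ and the ``bracketing'' terms $m_2(g_1 \otimes h_1)$ vs. $m_2(h_1 \otimes g_1')$, plus possibly $h_2(m_1 \otimes \id)$-type terms which vanish because $m_1 = 0$ on $P_n A$). Once the relation is written out, each term is killed by one of two elementary observations — $m_2$ is trivial on $\alpha\otimes\alpha$ in $P_n A$, and degree $1$ is empty in $P_n B$ for $n \geq 8$ — so the argument is short. I would present it by stating the degree-$2$ homotopy relation, then dispatching the terms one by one with these two observations.
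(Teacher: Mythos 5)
Your proof is correct and is essentially the paper's own argument: write out the arity-$2$ homotopy relation and kill every term using that $m_2(\alpha\otimes\alpha)=0$ in $P_nA$, that $m_1=0$ on both sides, and that $h_1(\alpha)\in(P_nB)^1$ vanishes. One shared caveat: both you and the paper justify $h_1(\alpha)=0$ by asserting $(P_nB)^1=0$, which actually fails at $n=8$ since $(D_8B)^1=(B^7)^\vee=\langle\widehat{\beta\gamma}\rangle\neq 0$; the conclusion still holds there because $\alpha\wedge\widehat{\beta\gamma}=0$ (as $\alpha\cdot B^5=0$), so the two $m_2(\,\cdot\otimes h_1\alpha)$ terms die anyway, but strictly speaking the stated degree argument only covers $n\geq 9$.
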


\begin{proof}
    If $g'$ is homotopic to $g$, then there is a map $h_1\colon P_n A \rightarrow P_n B$ of degree $-1$ satisfying
    \[(g-g')_2=h_1\circ m_2 - m_2\circ (g_1 \otimes h_1) - m_2\circ (h_1 \otimes g'_1). \]
    But $m_2(\alpha\otimes\alpha)=0$ and $h_1(\alpha)=0$ since $(P_n B)^1=0$.
\end{proof}

 If $M_1\rightarrow P_n A$ and $M_2\rightarrow P_n B$ are minimal cdga models, then by the argument in the last paragraph of the proof of \Cref{formalmap} the strictly unital $C_\infty$-morphism $f$ lifts to a cdga morphism $M_f\colon M_1\rightarrow M_2$ (i.e. the resulting square in the homotopy category of $C_\infty$-algebras commutes).

\begin{cor}\label{nonzerodegnonformal}
    For $n\geq 8$, the non-zero degree map $M_f$ is a dominant non-formal map of cdga's.
\end{cor}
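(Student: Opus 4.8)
There are three things to check about $M_f\colon M_1\to M_2$: that its source and target are formal rational Poincar\'e duality cdga's, that it has non-zero degree, and that it is not formal; only the last is substantive. Since $n\geq 8$ exceeds the cohomological dimensions of $A=H(S^2)$ and $B=H(S^2\vee(S^3\times S^4))$, the lemmas of \Cref{poincaredualization} make $P_nA$ and $P_nB$ into Poincar\'e duality cdga's, and as $A$ and $B$ are formal, \Cref{AformaliffPAformal} shows $P_nA$ and $P_nB$ are formal, hence so are their minimal models $M_1,M_2$ (which are connected cdga's since $P_nA,P_nB$ are cohomologically connected). For the degree: the $C_\infty$-morphism $f\colon P_nA\to P_nB$ produced in \Cref{extendinfty} by dualizing the retract $r$, which satisfies $r(1)\neq 0$, has non-zero degree exactly as in \Cref{lem:naturality}; since $M_f$ models $f$ it induces the same map on cohomology and therefore also has non-zero degree. (For $n$ large, with $M_1,M_2$ simply connected, this map is realized by a non-zero degree map of simply connected rational Poincar\'e duality spaces by \cite{Su77}, and formality of that map of spaces is equivalent to formality of $M_f$.)

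The core of the argument is that $M_f$ is not formal, which I would prove by contradiction. The crucial structural point is that $A$ and $B$ — hence $P_nA$ and $P_nB$ — carry the trivial differential, so that canonically $H(P_nA)=P_nA$ and $H(P_nB)=P_nB$, and in particular the chosen minimal model maps $p_1\colon M_1\to P_nA$ and $p_2\colon M_2\to P_nB$ are themselves strictly unital $C_\infty$-quasi-isomorphisms onto $H(P_nA)$ and $H(P_nB)$. Suppose $M_f$ were formal. By \Cref{formalmap} (the equivalence \ref{itf: formal}$\Leftrightarrow$\ref{itf: Cinfty}) there are strictly unital $C_\infty$-quasi-isomorphisms $q_1\colon M_1\to H(P_nA)$ and $q_2\colon M_2\to H(P_nB)$ with $H(M_f)\circ q_1\simeq q_2\circ M_f$, where $H(M_f)$ denotes the map with linear part $f_1$ and trivial higher operations. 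On the other hand, by the paragraph preceding this corollary, $f\circ p_1\simeq p_2\circ M_f$ in the homotopy category of strictly unital $C_\infty$-algebras. Working over $\QQ$, each of $p_1,p_2,q_1,q_2$ has a $C_\infty$-homotopy inverse; from the relation $M_f\simeq q_2^{-1}\circ H(M_f)\circ q_1$ obtained from the second square and then substituted into the first, we get
\[
f\ \simeq\ \psi\circ H(M_f)\circ\varphi,\qquad \varphi:=q_1\circ p_1^{-1},\quad \psi:=p_2\circ q_2^{-1}.
\]
Because $H(P_nA)$ and $H(P_nB)$ have vanishing differential, $\varphi$ and $\psi$ are $C_\infty$-quasi-isomorphisms between algebras with zero differential, hence honest $A_\infty$-automorphisms of $P_nA$ and of $P_nB$. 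Rearranging, $\psi^{-1}\circ f\circ\varphi^{-1}\simeq H(M_f)$ as $A_\infty$-morphisms $P_nA\to P_nB$, so \Cref{lem:homotopy2ary} gives
\[
\bigl(\psi^{-1}\circ f\circ\varphi^{-1}\bigr)_2(\alpha\otimes\alpha)\ =\ \bigl(H(M_f)\bigr)_2(\alpha\otimes\alpha)\ =\ 0,
\]
contradicting \Cref{prop:automorphism2ary}. Hence $M_f$ is not formal, and the corollary follows.

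The one genuinely delicate step is the last one, and its difficulty is packaged precisely into \Cref{prop:automorphism2ary} and \Cref{lem:homotopy2ary}: the former rules out that conjugating $f$ by $A_\infty$-automorphisms can annihilate its binary component on $\alpha\otimes\alpha$, and the latter guarantees that this component is a homotopy invariant. Everything else is bookkeeping with $C_\infty$-homotopy inverses over $\QQ$, together with the observation that for algebras with vanishing differential ``quasi-isomorphism'' and ``isomorphism'' agree; this is exactly what converts the abstract equivalence $[M_f]\cong[H(M_f)]$ coming from formality into the concrete conjugation relation $f\simeq\psi\circ H(M_f)\circ\varphi$ which the earlier computations forbid.
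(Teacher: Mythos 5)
Your proposal is correct and follows essentially the same route as the paper: assume $M_f$ is formal, use criterion \ref{itf: Cinfty} of \Cref{formalmap} together with the fact that $P_nA$ and $P_nB$ are minimal $A_\infty$-algebras (so quasi-isomorphisms to them become automorphisms) to produce a homotopy between a conjugate $\psi\circ f\circ\varphi$ and $H(f)$, and then contradict \Cref{prop:automorphism2ary} via \Cref{lem:homotopy2ary}. Your added bookkeeping on formality of the endpoints and on the degree is consistent with the paper's setup and does not change the argument.
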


\begin{proof}
We observe that $P_nA$ and $P_nB$ are formal minimal $A_\infty$-algebras and so we identify them with their cohomology.
Assuming $M_f$ to be formal, we obtain a commutative diagram in the homotopy category of $C_\infty$-algebras.

\[\begin{tikzcd}
P_nA \arrow[d,  "f"] & M_1 \arrow[r, "\sim"] \arrow[d, "M_f"] \arrow[l, "\sim"'] & P_nA \arrow[d, "H(f)"] \\
P_nB                & M_2 \arrow[r, "\sim"] \arrow[l, "\sim"']                  & P_nB                  
\end{tikzcd}\]
where the right hand square arises from applying criterion \ref{itf: Cinfty} of \Cref{formalmap} to the map of cdga's $M_f$.

It is now sufficient to consider this diagram in the category of (non-unital)  $A_\infty$-algebras. Recall that any quasi-isomorphism admits a quasi-inverse and that furthermore any quasi-isomorphism between minimal $A_\infty$-algebras is an isomorphism. Thus, the previous diagram gives rise to the following diagram of $A_\infty$-algebras 
    \[\begin{tikzcd}
P_n A \arrow[d, "f"'] & \arrow[l, "\varphi"] P_n A  \arrow[d, "H(f)"]  \\
P_nB  \arrow[r, "\psi"]                                   & P_n B 
\end{tikzcd}\]
     whose image in the homotopy category of $A_\infty$-algebras commutes and in which $\varphi,\psi$ are isomorphisms.
Hence we find a homotopy between $\psi\circ f\circ\varphi$ and $H(f)$. Since the higher components of $H(f)$ are trivial this cannot happen by \Cref{prop:automorphism2ary} and \Cref{lem:homotopy2ary}.\end{proof}

We say a map of spaces $Y \to X$ is formal if the induced map $A_{PL}(X) \to A_{PL}(Y)$ is formal in the above sense. The geometric realization of $M_f$ thus gives the desired example for \Cref{thmC}.

Geometrically, the map $f\colon A\to B$ we started with corresponds to the map $S^2 \vee (S^3 \times S^4) \to S^2$ given by the identity on the $S^2$ summand, and by the projection to $S^3$ followed by the Hopf map $h\colon S^3 \to S^2$ on the $S^3 \times S^4$ summand. This map factors through $S^2 \vee S^3$ and in fact one can show that $\Id_{S^2}\vee h\colon S^2\vee S^3\rightarrow S^2$ is formal\footnote{As we are seeing here, generally the composition of two formal maps need not be formal; see e.g. \cite[p.106, Proposition]{FT88}.}: indeed, it is homotopic to $p\circ \psi$ where $p=\Id_{S^2}\vee *\colon S^2\vee S^3\rightarrow S^2$ is formal and $\psi=\Id_{S^2}\vee (h+\Id_{S^3})\colon S^2\vee S^3\rightarrow S^2\vee S^3$ is a self-equivalence, where $h+\id_{S^3}$ denotes the sum in $\pi_3(S^2\vee S^3)$. The role of the $S^4$ factor in the second summand above is to restrict the possible automorphisms intertwining $S^2$ and $S^3$ in order to ensure non-formality.

\section{Geometric interpretation of the dualization}\label{universalpropertyandgeometricinterpretation}

We come now to the geometric interpretation of the algebraic dualization construction. This will extend the result of \cite[Theorem 6]{La00} which constructs a model for the boundary of a stable thickening of a simply connected finite cell complex $X$ using an appropriately truncated model\footnote{Note that the double of a stable thickening of $X$ is homotopy equivalent to the boundary, upon smoothing corners, of $[0,1]$ crossed with that stable thickening.} of $X$.

Let $X$ be any finite cell complex. Take an $n$-dimensional orientable manifold-with-boundary $M$ homotopy equivalent to $X$ (e.g. $M$ can be obtained by embedding $X$ into some $\mathbb{R}^n$ and thickening it to a manifold with boundary). Denote by $D(M)$ the double of $M$. Since the inclusion of $M\to D(M)$ is a one-sided inverse to the projection $D(M)\to M$, the long exact sequence of the pair $(D(M),M)$ splits and we obtain an additive decomposition
\[
H^\ast (D(M))\cong H^*(M)\oplus H^*(D(M),M)\cong H^*(M) \oplus H^*(M, \partial M) \cong H^*(X)\oplus D_n H^*(X)
\]
where the second isomorphism follows by excision, and the last follows by Poincar\'e--Lefschetz duality. This is also true multiplicatively, i.e. $H^*(D(M))\cong P_nH^*(X)$ as algebras. In \Cref{prop:topological interpretation} below, we prove the (more general) version of this statement on the level of cdga's.

\begin{lem}\label{7.1} Let $A \xrightarrow{f} B$ be a map of cohomologically finite type cdgas, where the cohomology of $B$ furthermore satisfies Poincar\'e duality. Then if $f$ admits a retract $B \xrightarrow{r} A$ of $A$-modules, it extends to an $A_\infty$-morphism $P_n A \to B$, where $n$ is the cohomological dimension of $B$. \end{lem}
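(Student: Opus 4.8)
The plan is to reduce to \Cref{mapfromPD}. Since $f\colon A\to B$ is a (strict, hence $A_\infty$-) algebra map, it suffices to produce a morphism of $A_\infty$-$A$-bimodules $\phi\colon D_nA\to B$, where $B$ carries the $A$-bimodule structure induced by $f$. Granting such a $\phi$, \Cref{mapfromPD} applied with target $B$ itself yields an $A_\infty$-morphism $P_nA\to B$ whose restriction to $A\subset P_nA$ is $f$.

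To build $\phi$ I would pass through $D_nB$. First, exactly as in the proof of \Cref{lem:naturality}, the dg-$A$-module retract $r\colon B\to A$ induces a strict morphism of $A_\infty$-$A$-bimodules $D_nr\colon D_nA\to D_nB$, where $D_nB$ is viewed as an $A$-bimodule via $f$. Second, I would use the Poincar\'e duality hypothesis to compare $D_nB$ with $B$: choose a closed functional $\mu\in (D_nB)^0=(B^n)^\vee$ that is non-trivial on $H^n(B)$ (possible since $H^n(B)\cong\QQ$), and define $\Phi_\mu\colon B\to D_nB$ by $\Phi_\mu(b)(b')=\pm\mu(b\wedge b')$, with the sign fixed to match the conventions of \eqref{formulaformi}. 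A direct check shows $\Phi_\mu$ is a chain map of $B$-bimodules, hence of $A_\infty$-$A$-bimodules via $f$; the place where the hypothesis is used is that $H(\Phi_\mu)\colon H^k(B)\to H^k(D_nB)=(H^{n-k}(B))^\vee$ is, up to sign, the adjoint of the intersection pairing of $H(B)$, so $\Phi_\mu$ is a \emph{quasi-isomorphism} precisely because $H(B)$ satisfies $n$-dimensional Poincar\'e duality.

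Finally, since quasi-isomorphisms of $A_\infty$-bimodules are invertible up to homotopy, choose an $A_\infty$-$A$-bimodule map $\Psi\colon D_nB\to B$ quasi-inverse to $\Phi_\mu$, and set $\phi:=\Psi\circ D_nr\colon D_nA\to B$. Feeding $f$ and $\phi$ into \Cref{mapfromPD} produces the desired extension $P_nA\to B$ of $f$.

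I expect the main work to be bookkeeping the signs in the definition of $\Phi_\mu$ and in verifying it is a bimodule chain map compatible with the dualization conventions; this is routine. The one genuinely essential point — that $H(\Phi_\mu)$ is an isomorphism — is immediate from the definition of Poincar\'e duality. (A more structural alternative to the $\mu$-construction is to invoke directly the existence of a bimodule quasi-isomorphism $B\xrightarrow{\sim}D_nB$ for any Poincar\'e duality cdga, which is part of the picture underlying the cyclic models of \Cref{simplePDmodels}; the elementary construction above is all that is needed here.)
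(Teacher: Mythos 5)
Your proposal is correct and follows essentially the same route as the paper: the map $\Phi_\mu$ is exactly the paper's quasi-isomorphism $b\mapsto b\wedge F$ with $F$ a fundamental-class representative viewed as a degree-zero element of $D_nB$, and the paper likewise inverts it as an $A_\infty$-$A$-bimodule map and composes with $D_nr$ before applying \Cref{mapfromPD}. No substantive differences.
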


\begin{proof} Poincar\'e duality of $B$ provides a quasi-isomorphism of dg-$B$-modules $B \to D_n B$, given by $b\mapsto b\wedge F$, where $F$ is a representative of a fundamental class (thought of as a degree zero element in $D_n B$). In particular, this is a quasi-isomorphism of dg-$A$-modules, and hence we may find a quasi-inverse $\psi$ in the category of $A_\infty$-$A$-bimodules, see \Cref{lem: quasi inverse}. The composition $\psi \circ D_n r$ is a map of $A_\infty$-$A$-bimodules, and hence we can apply \Cref{mapfromPD}. \end{proof}

\begin{prop}\label{prop:topological interpretation}(cf. \cite[Theorem 6]{La00})
    Let $A$ be a connected cohomologically finite type cdga and $X$ a finite cell complex such that $A_{PL}(X)$ is weakly equivalent to $A$. Take an $n$-dimensional orientable manifold-with-boundary $M$ homotopy equivalent to $X$ (e.g. $M$ can be obtained by embedding $X$ into some $\mathbb{R}^n$ and thickening it to a manifold with boundary). Then $P_{n}A$ is weakly equivalent to $A_{PL}(D(M))$ where $D(M)$ denotes the double of $M$.
\end{prop}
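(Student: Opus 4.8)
The plan is to produce a quasi-isomorphism (or a zigzag) relating $P_nA$ to $A_{PL}(D(M))$, using the fact that $D(M)$ is built as a pushout of $M$ with itself along $\partial M$, together with the Poincar\'e--Lefschetz duality of the pair $(M,\partial M)$. First I would record the geometric input: since $M$ is a thickening of $X$, the inclusion $X\hookrightarrow M$ is a homotopy equivalence, so $A_{PL}(M)$ is weakly equivalent to $A_{PL}(X)$, hence to $A$. The double $D(M)=M\cup_{\partial M}M$ is a closed $n$-manifold, and a Mayer--Vietoris argument identifies $H^*(D(M))$ additively with $H^*(M)\oplus H^*(M,\partial M)$; by Lefschetz duality $H^k(M,\partial M)\cong H_{n-k}(M)\cong (H^{n-k}(M))^\vee$ (the last step using that $H^*(M)\cong H^*(X)$ is finite type), which matches $H^k(D_nA)$ on cohomology. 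So at the level of cohomology the two sides agree; the content of the proposition is to make this compatible with the full rational homotopy type.

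The cleanest route I would take is via the retract/naturality machinery already set up. Collapsing the complementary copy of $M$ in $D(M)$ gives a map $D(M)\to M$ which is a retraction of the inclusion $M\hookrightarrow D(M)$; on $A_{PL}$ this yields a cdga map $A_{PL}(M)\to A_{PL}(D(M))$ together with a cdga retraction $A_{PL}(D(M))\to A_{PL}(M)$, in particular a retraction of $A_{PL}(M)$-modules. Replacing $A_{PL}(M)$ by the weakly equivalent $A$ (using \Cref{cor:qi-invariance} to transport along the zigzag at the end), \Cref{7.1} then produces an $A_\infty$-morphism $P_nA\to A_{PL}(D(M))$ extending $A\to A_{PL}(D(M))$, where $n=\dim D(M)$ is the cohomological dimension since $D(M)$ is a closed oriented $n$-manifold. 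It remains to check this $A_\infty$-morphism is a quasi-isomorphism, which by the computation of the previous paragraph reduces to checking it is an isomorphism on cohomology: on the $A$-summand it is the isomorphism $H^*(A)\cong H^*(M)\hookrightarrow H^*(D(M))$, and on the $D_nA$-summand it hits exactly the Lefschetz-dual complement $H^*(M,\partial M)$, so surjectivity (and hence bijectivity, by dimension count) holds. I would phrase the surjectivity onto the second summand via the projection formula / intersection pairing: the image of $H^*(D_nA)$ pairs non-degenerately with $H^*(M)\subset H^*(D(M))$, forcing it to be a complement, so together with the first summand it spans all of $H^*(D(M))$.

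The main obstacle I anticipate is exactly this last surjectivity/non-degeneracy verification: \Cref{7.1} only guarantees \emph{an} $A_\infty$-map $P_nA\to A_{PL}(D(M))$ extending the inclusion, and one must argue that the recipe (which goes through a quasi-inverse $\psi$ of the Lefschetz duality map $B\to D_nB$ for $B=A_{PL}(D(M))$) actually sends the fundamental-class-evaluating part of $D_nA$ to a genuine generator of $H^n(D(M))$ and, more subtly, that the whole $D_nA$-part maps onto a complement of $H^*(M)$. I would handle this by unwinding the construction in \Cref{7.1}: on cohomology, $D_n r$ is the dual of $H^*(M)\to H^*(D(M))$, i.e. the restriction $H^*(D(M))^\vee\to H^*(M)^\vee$ dualized, and composing with the Lefschetz iso $H^*(D(M))\cong H^*(D(M))^\vee$ realizes the inclusion of the Lefschetz-dual summand; the key point is that $H^*(D(M))\to H^*(M)$ admits the section coming from $M\hookrightarrow D(M)$, so its dual is a surjection and the composite lands onto a full complement. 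Once cohomological bijectivity is in hand, the $A_\infty$-map is a weak equivalence, and since both $P_nA$ (as a cdga) and $A_{PL}(D(M))$ are cdga's we may, by \cite{CPRNW19} as invoked earlier, conclude they are weakly equivalent as cdga's, completing the proof.
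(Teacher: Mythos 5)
Your proposal is correct and takes essentially the same route as the paper: the composite retraction $X\hookrightarrow M\hookrightarrow D(M)\to M\to X$ gives a cdga map $A_{PL}(X)\to A_{PL}(D(M))$ admitting a (module) retract, \Cref{7.1} yields the $A_\infty$-morphism $P_nA\to A_{PL}(D(M))$, and the quasi-isomorphism check is a cohomology count --- your Mayer--Vietoris/Lefschetz computation is precisely the content of the paper's \Cref{lem:bettinumberdouble}, while the paper obtains injectivity more cheaply by noting that a non-zero degree map between Poincar\'e duality algebras is automatically cohomologically injective, so the unwinding of \Cref{7.1} you worry about in your last paragraph can be avoided entirely. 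One small correction: the retraction $D(M)\to M$ is the fold map identifying the two copies of $M$, not the collapse of the complementary copy (which would instead produce $M/\partial M$).
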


\begin{proof}
    Since $X$ and $M$ are homotopy equivalent, we may replace $A_{PL}(X)$ by the quasi-isomorphic $A_{PL}(M)$. Consider the obvious retract for the inclusion of $M$ into its double $M \hookrightarrow D(M)$. On piecewise-linear forms we thus have a map $A_{PL}(M) \to A_{PL}(D(M))$ which admits a retract. We thus obtain by \Cref{7.1} an $A_\infty$-morphism $P_n(A_{PL}(M)) \to A_{PL}(D(M))$. It is cohomologically injective as a non-zero degree map between cdga's that satisfy cohomological Poincaré duality. Furthermore, this map is an $A_\infty$-quasi-isomorphism since the sum of Betti numbers of $P_n(A_{PL}(M))$ is twice that of $M$, and the same holds for $A_{PL}(D(M))$ by the initial discussion in this section. By \cite{CPRNW19}, $P_n(A_{PL}(M))$ and $A_{PL}(DM)$ are also weakly equivalent as cdgas.
\end{proof}



\section{Simple Poincar\'e duality models}\label{simplePDmodels}
In this section, we revisit the theory of cyclic models for Poincar\'e duality spaces to recover, and extend, results of the form `connectivity implies formality'. 
\begin{defi}
Let $(A,m_*)$ be a minimal $A_\infty$-algebra with commutative $m_2$ satisfying Poincaré duality on its cohomology with dimension $n$. We say $A$ is \emph{simple} if $m_k$ maps trivially to $A^n$ for $k\geq 3$. \end{defi}

This definition is motivated by the observation that on a cdga satisfying Poincar\'e duality on its cohomology, the triple and higher Massey products landing in top degree are trivial. Indeed, given a Massey product $\langle x_1, \ldots, x_k \rangle$, one can perturb any chosen primitive of $\langle x_2, \ldots, x_k \rangle$ by any class pairing non-trivially with $x_1$ and hence scale the output of the $k$-fold Massey product (which lies in the one-dimensional top cohomology) to zero; this argument is from \cite[after Lemma 7]{CFM08}.

\begin{ex}
Let $(A,m_*)$ be any strictly unital minimal $A_\infty$-algebra (i.e.\ the $m_k$, $k\geq 3$ vanish whenever one plugs in $1$ as one of the inputs; such models always exist for unital dgas \cite[\textsection 3.2.1]{LH}) with commutative $m_2$, and $P_N(A)$ its Poincaré dualization. Then $P_N(A)$ is simple. Indeed for $x_i\in A$, $\varphi\in D_N(A)$, evaluating $m_k(x_1,\ldots,x_{l-1},\varphi,x_{l+1},\ldots,x_k)$ on $1\in A^0$ we obtain $$\varphi(m_k(x_{l+1},\ldots,1,\ldots, x_{l-1}))=0.$$
\end{ex}

These models are useful as the operadic Massey products can be computed using the same formula as was used for the definition of Poincar\'e dualization,  \cref{formulaformi}:

\begin{prop}\label{prop:formulainsimplyPDmodel}
    Let $(A,m_*)$ be simple, and let $\Phi\colon A^k\rightarrow (A^{N-k})^{\vee}$ denote the isomorphism induced by a choice of fundamental class and Poincaré duality, i.e. the map defined via the homomorphism $\int\colon A^N\rightarrow \mathbb{Q}$ so that
$$\left(\Phi(a)\right)(b)=  \int m_2(a,b).$$ Then, for any $l\leq k$ we have
$$\left(\Phi(m_k(x_1,\ldots,x_k))\right) (b) = (-1)^{kl + \Sigma_1^{l-1}(\Sigma_{l+1}^k + |b| + |x_l|) + k|x_l|} \Phi(x_l) (m_k(x_{l+1}, \ldots,x_k, b, x_1, \ldots, x_{l-1})),$$ where $\Sigma_i^j$ denotes $\sum_{t=i}^j |x_t|$. \end{prop}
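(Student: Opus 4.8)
The plan is to exploit the fact that ``simple'' means exactly the vanishing of the obstruction that would otherwise spoil the formula \eqref{formulaformi}. Recall from \Cref{prop:Ainfty-dualization} that on $P_N A = A \oplus D_N A$ the structure maps $m_i$ are extended by \eqref{formulaformi}, and $D_N A$ is a strictly unital infinity-$A$-bimodule; the key point will be that when $A$ is simple, the isomorphism $\Phi$ is itself a map of $A_\infty$-$A$-bimodules from $A$ (with its canonical bimodule structure over itself) to $D_N A$, up to the sign bookkeeping already recorded in \eqref{formulaformi}. More precisely, $\Phi(a) = a \wedge F$ where $F \in (D_N A)^0$ is the chosen fundamental class, and the assertion of the proposition is that $\Phi$ intertwines the operations $m_k$ on $A$ with the operations $m_k$ on $P_N A$ restricted to $D_N A$.

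The steps I would carry out, in order: first, unwind the definitions so that the claimed identity becomes a statement purely about the original $A_\infty$-algebra $A$ together with its pairing $\int\colon A^N \to \mathbb{Q}$. Concretely, $\left(\Phi(m_k(x_1,\ldots,x_k))\right)(b) = \int m_2\bigl(m_k(x_1,\ldots,x_k),\, b\bigr)$, while the right-hand side is $(-1)^{\varepsilon}\int m_2\bigl(x_l,\, m_k(x_{l+1},\ldots,x_k,b,x_1,\ldots,x_{l-1})\bigr)$ with $\varepsilon$ the stated sign. So the whole proposition reduces to a ``cyclic symmetry'' statement for the expression $\int m_2(m_k(-,\ldots,-),-)$ under cyclic rotation of its $k+1$ arguments $x_1,\ldots,x_k,b$. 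Second, I would prove this cyclic symmetry by invoking the $A_\infty$-relations (Stadler/quadratic relations) combined with the fact that $\int \circ\, m_3 = \int \circ\, m_4 = \cdots = 0$ into top degree --- i.e.\ simplicity. The identity $\sum_{r+s = k+1} \pm m_r(\mathrm{id}^{\otimes i}\otimes m_s \otimes \mathrm{id}^{\otimes j}) = 0$, paired against $\int$, kills every term with an inner $m_{\geq 3}$ landing in $A^N$ (these are all the ``deep'' terms) and leaves precisely the terms of the form $\int m_2(m_k(\cdots),\cdots)$ with the inputs cyclically permuted; matching these up with correct signs yields the desired equality. Third, I would check that the sign $\varepsilon = kl + \Sigma_1^{l-1}(\Sigma_{l+1}^k + |b| + |x_l|) + k|x_l|$ is precisely the one produced by the bimodule-structure formula \eqref{formulaformi} — indeed this is the same $\varepsilon$ appearing there with the substitution $\varphi = \Phi(x_l)$, $i = k$, so once the unsigned identity is established the sign is bookkeeping we have already done.

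Alternatively, and perhaps more cleanly, I would argue as follows: the simplicity hypothesis says exactly that the defining formula \eqref{formulaformi} for $m_k$ on $D_N A \subset P_N A$ ``closes up'' — plugging $1 \in A^0$ into a $D_N A$-slot of $m_k$ gives $0$ for $k \geq 3$ (this is the Example preceding the proposition, or rather its converse). Then I would transport the operations via the quasi-isomorphism $\Phi\colon A \xrightarrow{\sim} D_N A$ of complexes, noting that under simplicity $\Phi$ upgrades to an isomorphism of infinity-$A$-bimodules, and read off the formula for $m_k$ acting through $\Phi$ directly from \eqref{formulaformi}; this is just \eqref{formulaformi} with $\varphi$ replaced by $\Phi(x_l)$ and $m_i(\ldots,b,\ldots)$ interpreted via $\Phi$ on the left-hand side.

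\textbf{Main obstacle.} The conceptual content is light — it is essentially ``simplicity $=$ the bimodule $D_N A$ is (via $\Phi$) the regular bimodule $A$ in top degree'' — so the real work is entirely in the sign computation: verifying that the Koszul signs in the $A_\infty$-relations, the sign in $\int m_2(a,b)$ versus $\int m_2(b,a)$ (which introduces $(-1)^{|a||b|}$ and interacts with $|m_k(x_1,\ldots,x_k)| = \Sigma_1^k + 2 - k$), and the sign $\varepsilon$ in \eqref{formulaformi} all combine to exactly the claimed exponent. I expect this to require carefully tracking the degree of $m_k(x_1,\ldots,x_k)$ and the reordering of the block $x_1 \cdots x_{l-1}$ past the block $x_l \cdots x_k$ and past $b$; the cleanest route is to first establish the $l=1$ case (pure cyclic rotation by one step, where $\varepsilon$ simplifies considerably) from the $A_\infty$-relation directly, and then obtain general $l$ by iterating the $l=1$ case $l-1$ times and checking the signs telescope to the stated formula.
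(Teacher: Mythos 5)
Your plan is essentially the paper's proof: reduce the claim to cyclic symmetry of $\int m_2(m_k(-,\ldots,-),-)$ in its $k+1$ arguments, derive the one-step rotation from the $A_\infty$-relation on $k+1$ inputs (where simplicity kills every term whose \emph{outer} operation is $m_{\geq 3}$ landing in $A^N$, leaving only $m_2(m_k\otimes 1)$ and $m_2(1\otimes m_k)$, together with commutativity of $m_2$), and iterate $l$ times to telescope the signs. The only quibbles are terminological — the relation has $r+s=k+2$, and it is the outer rather than the inner $m_{\geq 3}$ that simplicity annihilates — but you identify the surviving terms correctly, so the argument goes through as in the paper.
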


\begin{proof}
The left-hand side of the above equation equals $\int m_2(m_k(x_1, \ldots, x_k), b)$. On the other hand we have, using the commutativity of $m_2$, that 
\begin{align*} \Phi(x_l)&(m_k(x_{l+1},\ldots, x_k,b,x_1,\ldots,x_{l-1}) = \int m_2(x_l,m_k(x_{l+1},\ldots,b,\ldots, x_{l-1})) \\ &= (-1)^{|x_l|(\Sigma_1^{l-1} + |b| + \Sigma_{l+1}^k + k)} \int m_2(m_k(x_{l+1}, \ldots, x_k, b, x_1, \ldots, x_{l-1}), x_l). \end{align*}

Let us denote $x_{k+1} = b$ when it simplifies the notation. To finish the proof, let us consider the left-hand side again, and observe that the arguments $x_1,\ldots,x_k,b$ of $m_2\circ (m_k \otimes 1)$ may be permuted cyclically, up to a change in sign, in case their degrees add up to $N+k-2$.  To see this observe first that by the $A_\infty$-equation and the fact that $A$ is a simple Poincar\'e duality model we get $$ 0 = -m_2(1 \otimes m_k) + (-1)^k m_2(m_k \otimes 1)$$ when evaluated on $(x_1, \ldots, x_{k+1})$. Evaluating and using that $m_2$ is commutative, this becomes $$m_2(m_k(x_1, \ldots, x_k), x_{k+1}) = (-1)^{k + |x_1|\Sigma_2^{k+1}} m_2(m_k(x_2, \ldots, x_{k+1}), x_1).$$

Iterating $l$ times, we obtain $$m_2(m_k(x_1, \ldots, x_k), x_{k+1}) = (-1)^{lk + \Sigma_1^l(\Sigma_1^{k+1} + 1)} m_2(m_k(x_{l+1}, \ldots, x_k, x_{k+1}, x_1, \ldots, x_{l-1}), x_l).$$ From here we have $$\left(\Phi(m_k(x_1,\ldots,x_k))\right) (b) = (-1)^{k(l+|x_l|) + \Sigma_1^{l-1}(\Sigma_1^k + |b| + 1)} \left( \Phi(x_l) \right) (m_k(x_{l+1}, \ldots,x_k, b, x_1, \ldots, x_{l-1})),$$ which is equivalent to the desired equation. \end{proof}

The formula in the statement of \Cref{prop:formulainsimplyPDmodel} is equivalent to the following: for any $i+1+j=k$ the diagram
\[\xymatrix{A^{\otimes k}\ar[rr]^(.4){1^{\otimes i}\otimes \Phi\otimes 1^{\otimes j}}\ar[d]^{m_k}& & A^{i} \otimes A^\vee\otimes A^{\otimes j}\ar[d]\\ A\ar[rr]^{\Phi}& & A^\vee }
\]
commutes, where the right hand vertical map is the $A_\infty$-$A$-bimodule structure on $A^\vee$ from \cref{formulaformi}. I.e., $\Phi$ defines a strict morphism of $A_\infty$-$A$-bimodules $A\to A^\vee$.

This says that $A$ together with the pairing given by $\Phi$ is a \textbf{cyclic} $A_\infty$-algebra, see e.g. \cite[Definition 3.1]{CL11}, and \cite{HL08} for the $C_\infty$ case, where the term \emph{symplectic} is used instead of \emph{cyclic}; for motivation of the latter terminology we refer the reader to the original \cite{K93}.

On the other hand, it is clear that a unital minimal cyclic $A_\infty$-algebra with commutative $m_2$ is simple. Namely, for $m_k(x_1, \ldots, x_k)$ in top degree, we have $$\int m_k(x_1, \ldots, x_k) = \Phi(1)m_k(x_1, \ldots, x_k) = \pm \int m_2(x_1, m_k(x_2, \ldots, x_k, 1)),$$ which vanishes by unitality.

\begin{cor}\label{simpleiscyclic} A strictly unital minimal $A_\infty$-algebra with commutative $m_2$ satisfying Poincar\'e duality on its cohomology is simple if and only if, upon making a choice of fundamental class, it is cyclic with respect to the pairing given by $\Phi$ defined above. \end{cor}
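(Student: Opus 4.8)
The plan is to prove both directions of \Cref{simpleiscyclic}, one of which is essentially \Cref{prop:formulainsimplyPDmodel} and the other of which has been sketched in the paragraph immediately preceding the corollary.

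\textbf{The ``simple $\Rightarrow$ cyclic'' direction.} Suppose $(A, m_*)$ is strictly unital minimal with commutative $m_2$ and satisfies Poincar\'e duality on cohomology. Fix a choice of fundamental class, i.e. a homomorphism $\int \colon A^N \to \QQ$ inducing an isomorphism $H^N(A) \cong \QQ$, and let $\Phi \colon A^k \to (A^{N-k})^\vee$ be defined by $(\Phi(a))(b) = \int m_2(a,b)$. Since $m_2$ is commutative and associative on the cdga level, and the pairing is non-degenerate on cohomology, $\Phi$ is a quasi-isomorphism; but for cyclicity we only need that $\int m_2(m_k(x_1,\dots,x_k), x_{k+1})$ is cyclically (anti)symmetric in the appropriate sense. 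Assuming $A$ is simple, \Cref{prop:formulainsimplyPDmodel} is exactly the statement that $\Phi$ is a strict morphism of $A_\infty$-$A$-bimodules $A \to A^\vee$, where $A^\vee = D_N A$ carries the bimodule structure of \cref{formulaformi}. Unwinding the definition of a cyclic $A_\infty$-algebra (as in \cite[Definition 3.1]{CL11}, or \cite{HL08} in the $C_\infty$ case), this is precisely the condition that $A$ is cyclic with respect to the pairing $\langle a, b\rangle := \int m_2(a,b)$. So this direction is an immediate invocation of \Cref{prop:formulainsimplyPDmodel} together with the translation between ``$\Phi$ is a strict bimodule map'' and ``the pairing is cyclic'', which is the content of the displayed diagram after that proposition.

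\textbf{The ``cyclic $\Rightarrow$ simple'' direction.} Conversely, assume $A$ is strictly unital, minimal, with commutative $m_2$, and cyclic with respect to the pairing $\Phi$. We must show $m_k$ maps trivially to $A^N$ for $k \geq 3$, i.e. that $\int m_k(x_1, \dots, x_k) = 0$ for all homogeneous $x_i$ with total degree $N$. Since $\int(-) = \Phi(1)(-)$ and $\Phi(1)$ is the counit-type functional pairing $1$ with everything, cyclicity lets us rotate: $\int m_k(x_1, \dots, x_k) = \langle 1, m_k(x_1, \dots, x_k)\rangle = \pm \langle x_1, m_k(x_2, \dots, x_k, 1)\rangle = \pm \int m_2(x_1, m_k(x_2, \dots, x_k, 1))$. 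By strict unitality, $m_k(x_2, \dots, x_k, 1) = 0$ for $k \geq 3$, so the whole expression vanishes. This is the short computation displayed right before the corollary; one only needs to be slightly careful that the sign produced by the cyclic rotation is irrelevant since the term vanishes outright.

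\textbf{Main obstacle.} There is no serious obstacle: the corollary is a repackaging. The one point requiring care is the precise dictionary between the three equivalent formulations --- (i) $A$ simple, (ii) $\Phi \colon A \to A^\vee$ a strict $A_\infty$-$A$-bimodule morphism, (iii) $A$ cyclic with respect to $\langle -, -\rangle = \int m_2(-,-)$ --- and in particular matching the sign conventions of \cref{formulaformi} with whichever convention for ``cyclic $A_\infty$-algebra'' one adopts from the literature. Since the paper has already committed to the conventions of \cite[Section 2]{MSZ23} and the bimodule signs of \cite{Tr08, Tr11}, I would simply cite \cite[Definition 3.1]{CL11} for the notion of cyclic algebra, remark that the displayed commuting diagram after \Cref{prop:formulainsimplyPDmodel} is the translation of that definition into the bimodule-morphism language, and let \Cref{prop:formulainsimplyPDmodel} supply the forward implication and the unitality computation the reverse one.

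\begin{proof} This is essentially a restatement of \Cref{prop:formulainsimplyPDmodel} together with the discussion following it. If $A$ is simple, then \Cref{prop:formulainsimplyPDmodel} says exactly that $\Phi$ defines a strict morphism of $A_\infty$-$A$-bimodules $A\to A^\vee$, where $A^\vee = D_N A$ carries the bimodule structure of \cref{formulaformi}; as recorded in the commuting diagram above, this is precisely the statement that $A$ is cyclic with respect to the pairing $\langle a,b\rangle := \int m_2(a,b)$.

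Conversely, suppose $A$ is cyclic with respect to this pairing. For $m_k(x_1,\ldots,x_k)$ of top degree $N$, cyclicity and the commutativity of $m_2$ give
\[
\int m_k(x_1,\ldots,x_k) = \Phi(1)\bigl(m_k(x_1,\ldots,x_k)\bigr) = \pm\int m_2\bigl(x_1, m_k(x_2,\ldots,x_k,1)\bigr),
\]
which vanishes by strict unitality of $m_k$ for $k\geq 3$. Hence $A$ is simple.
\end{proof}
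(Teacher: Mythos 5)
Your proposal is correct and follows exactly the paper's argument: the forward direction is the content of \Cref{prop:formulainsimplyPDmodel} (read as the statement that $\Phi$ is a strict $A_\infty$-$A$-bimodule morphism $A\to A^\vee$, i.e.\ cyclicity), and the converse is the same rotation-plus-strict-unitality computation $\int m_k(x_1,\ldots,x_k)=\pm\int m_2(x_1,m_k(x_2,\ldots,x_k,1))=0$ displayed just before the corollary. No gaps.
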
 

The existence of cyclic (also known as symplectic or Frobenius) $C_\infty$-models, or $A_\infty$-models, is well known; see \cite[Theorem 5.5]{HL08}, \cite[1.2]{CL09}, \cite[Corollary 29]{KTV21}, \cite[Theorem 10.2.2]{KS06}.

As applications, we now prove an analogue of \cite[Theorem 3.1]{FM05}, that a cdga satisfying Poincar\'e duality on its cohomology, of dimension $N$, is formal if and only if it is $\lceil \frac{N}{2} - 1 \rceil$--formal in the sense of loc. cit., and recover (and extend) some known formality results for Poincar\'e duality spaces.

\begin{cor}
Let $A$ be a simple, strictly unital, $A_\infty$-algebra in which $m_k$ vanishes on $(A^{\leq \lceil N/2 - 1 \rceil })^{\otimes_k}$ for $k\geq 3$. Then $A$ is formal.
\end{cor}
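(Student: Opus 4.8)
The plan is to reduce the formality of such an $A$ to the triviality of all its higher operations via a degree/symmetry argument combined with the cyclic structure furnished by \Cref{simpleiscyclic} and the computational formula of \Cref{prop:formulainsimplyPDmodel}. Since $A$ is a minimal $A_\infty$-algebra, it is formal precisely when it is $A_\infty$-isomorphic to $(H(A), m_2)$ with trivial higher operations, which (for commutative $m_2$, strictly unital, by the remarks on $C_\infty$ versus $A_\infty$ formality in \Cref{inftyPDsection}) is equivalent to all $m_k$, $k\geq 3$, being gauge-equivalent to zero; in fact here I will aim for the stronger statement that the $m_k$ themselves vanish, i.e. that $A$ is \emph{intrinsically} a formal model, or at least that an obvious truncation/averaging produces one.

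First I would set up the bookkeeping: write $N$ for the Poincar\'e duality dimension, fix a fundamental class and hence the isomorphism $\Phi\colon A^k \to (A^{N-k})^\vee$, and invoke \Cref{simpleiscyclic} to regard $A$ as a cyclic $A_\infty$-algebra. The key structural input is \Cref{prop:formulainsimplyPDmodel}: for $m_k(x_1,\dots,x_k)$ the pairing $\int m_2(m_k(x_1,\dots,x_k),b)$ is (up to sign) cyclically symmetric in $x_1,\dots,x_k,b$. Now suppose $m_k(x_1,\dots,x_k)\neq 0$ for some $k\geq 3$; by nondegeneracy of the pairing there is $b$ with $\int m_2(m_k(x_1,\dots,x_k),b)\neq 0$. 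The degrees satisfy $|x_1|+\dots+|x_k|+|b| = N + k - 2$ (the output of $m_k$ sits in degree $\sum|x_i| - (k-2)$, and pairs nontrivially only against degree $N - (\sum|x_i|-(k-2))$). By cyclic symmetry we may rotate so that the \emph{largest} of the $k+1$ degrees is in the last slot; the remaining $k$ degrees then sum to at most $N+k-2$ minus that largest one. Since there are $k+1$ entries with degrees summing to $N+k-2$ and each entry has degree $\geq 2$ (by strict unitality and minimality the $m_k$ for $k \geq 3$ only see reduced classes, which here lie in degrees $\geq 1$; one should check the exact bound, but in the simply connected situation degrees are $\geq 2$), a counting argument shows at least one of the rotated first $k$ entries — in fact enough of them — must have degree $\leq \lceil N/2 - 1\rceil$. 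More carefully: I want to arrange, after a cyclic rotation, that \emph{all} of $x_1,\dots,x_k$ (the inputs, as opposed to the test class $b$) lie in degrees $\leq \lceil N/2-1\rceil$, which then contradicts the hypothesis that $m_k$ vanishes on $(A^{\leq \lceil N/2-1\rceil})^{\otimes k}$. The pigeonhole to make this work: among the $k+1$ cyclically-arranged degrees summing to $N+k-2$, place in the $b$-slot the unique entry of degree $> \lceil N/2 - 1 \rceil$ if there is one — there can be at most one such entry of degree $\geq \lceil N/2\rceil$ among any $k+1$ entries whose total is $N+k-2$ with $k\geq 3$, since two entries of degree $\geq \lceil N/2\rceil$ already exhaust $\geq N$, leaving $< k-2$ for the remaining $k-1$ entries each of degree $\geq 1$, which fails for $k \geq 3$ once the bound $\geq 1$ (or $\geq 2$) is used. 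Hence after rotation all inputs are in low degree and $m_k(x_1,\dots,x_k) = 0$, a contradiction.

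The upshot is that \emph{every} $m_k$ with $k\geq 3$ vanishes identically, so $A = (H(A), m_2)$ as an $A_\infty$-algebra, which is formal. The main obstacle I anticipate is the precise degree/pigeonhole arithmetic in the case $k=3$ (where $N+k-2 = N+1$ is split among four entries) and the careful handling of the ceiling function $\lceil N/2-1\rceil$ for $N$ even versus odd, together with making sure the cyclic-rotation sign manipulations of \Cref{prop:formulainsimplyPDmodel} are legitimate — i.e. that the degree condition ``sum $= N+k-2$'' is exactly what is needed to apply the cyclicity, which it is, since that is precisely when the output of $m_k$ paired against $b$ lands in top degree. One should also double-check the lower bound on the degrees of reduced generators entering $m_k$, $k\geq 3$: strict unitality kills terms with a $1$ entry, and minimality/connectivity gives degrees $\geq 1$; if one only has cohomological connectivity (not simple connectivity) a small separate argument or hypothesis is needed, but in the intended application degrees are $\geq 2$ and the counting is comfortable.
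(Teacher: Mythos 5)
Your proposal is correct and follows essentially the same route as the paper: among the cyclically arranged entries there is at most one of degree $\geq N/2$, and the cyclicity formula of \Cref{prop:formulainsimplyPDmodel} lets you rotate it into the test slot $b$, after which the hypothesis kills the inner $m_k$. The one borderline case your pigeonhole over the $k+1$ entries does not cover is $b$ of degree $0$ (equivalently, the output of $m_k$ landing in top degree $N$, e.g.\ two inputs of degree $N/2$ and the rest of degree $1$); there one must invoke simplicity directly, which is exactly how the paper disposes of it.
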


\begin{proof}
Let $x_1,\ldots,x_k\in A$ and assume $|x_l| \geq N/2$ for some $1 \leq l \leq k$. For degree reasons, we can assume all other inputs are of degree $\leq N/2$. In fact, if $|x_l| > N/2$, we can assume all other inputs are of degree $< N/2$. If $|x_l| = N/2$, there might be another input of degree $N/2$, in which case, in order for the output to be in degree $\leq N$, all $k-2$ other inputs must be of degree 1; in this case the output is in degree $N$ and hence $m_k$ vanishes by simplicity. Therefore we assume we have a single $x_l$ of degree $\geq N/2$, and for the other inputs, $|x_i| < N/2$. 

To show $m_k(x_1, \ldots, x_k)$ vanishes, we show that $\Phi(m_k(x_1,\ldots, x_k))$ vanishes when evaluated on any $b$, whose degree we may assume is $N-|m_k(x_1, \ldots, x_k)| < N/2$. Now, $$\Phi(m_k(x_1,\ldots, x_k))(b) = \pm \Phi(x_l)(m_k(x_{l+1}, \ldots, x_k, b, x_1, \ldots, x_{l-1})),$$ and the input on the right-hand side vanishes by assumption.
\end{proof}

\begin{rem} It is not true that $s$-formality in the sense of \cite{FM05} is equivalent to the existence of a minimal $C_\infty$ model with $m_{\geq 3}$ vanishing on inputs whose individual degrees are $\leq s$. Indeed, consider for example the cdga $$A=\left( \Lambda(a,b,c,x,y), da = 0, db = 0, dc = ab, dx = a^5, dy = b^5\right),$$ where $\deg(a) = \deg(b) = 2, \deg(c) = 3, \deg(x) = \deg(y) = 9$. Its cohomology satisfies Poincar\'e duality with formal dimension 19, a basis is given by \begin{align*} &[1], [a], [b], [a^2], [b^2], [a^3], [b^3], [a^4], [b^4],
[b^4 c -ay], [bx -a^4 c],
[a b^4 c -a^2 y], [b^2 x - a^4 b c], \\
&[a^2 b^4 c-a^3 y], [b^3 x-a^4 b^2 c],
[a^3 b^4 c-a^4 y], [b^4 x-a^4 b^3 c ],
[ a^4 b^4 c-a^5 y].
\end{align*}

The cdga $A$ is easily seen to be 8-formal in the sense of \cite{FM05}: For this one needs to check that every closed element in $c\cdot\Lambda(a,b,c)$ is exact in $A$. But, the only closed element here is $0$. On the other hand, $A$ is not formal. For instance, it has the nontrivial Massey product $\langle [a^4],[a],[b]\rangle =\{[bx-a^4c]\}\not\ni\{0\}$.

Finally, if we were to find a minimal $A_\infty$ model with $m_{\geq 3}$ vanishing on inputs of degree $\leq 8$, then $m_{\geq 3}$ would also vanish on inputs of degree $\leq 9$ since $H^9$ is trivial, implying formality. 

\end{rem}

\begin{rem} We note how the existence of minimal strictly unital cyclic $C_\infty$-algebra models recovers Miller's theorem \cite{M79} that a $k$--connected rational Poincar\'e duality space of formal dimension $\leq 4k+2$ is formal. Namely, choosing a minimal cyclic $C_\infty$-model, we see that its higher operations $m_{\geq 3}$ necessarily vanish: for degree reasons the only possibly non-trivial output would lie in top degree, which is excluded by simplicity. 

Furthermore, we can recover part of the extension of Miller's theorem by Cavalcanti \cite[Theorem 1]{Ca06} that formality still holds if the formal dimension is $\leq 4k+4$ and $b_{k+1} = 1$. Namely, one need only additionally consider higher products of the form $m_3(x,x,x)$ for $x \in H^{k+1}$, and, in the case of dimension $4k+4$, those of the form $m_3(x,x,y)$ for $y \in H^{k+2}$ (or permutations thereof). The products $m_3(x,x,x)$ vanish since we are in a $C_\infty$-algebra and $m_3$ vanishes on shuffles; note that there are three $(1,2)$ shuffles, for example. In dimensions of the form $4k+4$, by Poincar\'e duality a product of the form $m_3(x,x,y) \in H^{3k+3}$ vanishes if and only if $m_2(x, m_3(x,x,y)) = 0$, which holds since by cyclicity $m_2(x, m_3(x,x,y)) = \pm m_2(y, m_3(x,x,x)) = 0$. 

In \cite[Conjecture 1]{Zh19}, Zhou conjectured a generalization of the above, in the following form: an $n$--dimensional $k$-connected closed manifold
with $b_{k+1} = 1$ admits a minimal $A_\infty$ model with $m_{\geq j} = 0$ for any $j \geq 3$ such that $n \leq (j+1)k + 4$. We can confirm this immediately for all $j \geq 3$ with $n \leq (j+1)k + 2$, by taking a minimal simple $C_\infty$ model. If $k \geq 2$ and $n = (j+1)k + 3$, or $k \geq 2$ and $n = (j+1)k + 4$, we can verify the conjecture if $j$ is not a power of two. Indeed, one sees that $m_j(x, x, \ldots, x)$ for $x \in H^{k+1}$ vanishes since there is some $0 < i < j$ such that the $(i,j-i)$ shuffle equation has an odd number of terms; i.e. since $j$ is not a power of two, some binomial coefficient $\binom{j}{i}$ is odd. The only other potentially nonvanishing $m_{\geq j}$ are of the form $m_j(y,x,x, \ldots, x)$ and permutations thereof, where $y \in H^{k+2}$; these vanish again by cyclicity. In the case $k=1$ and $n = (j+1)k + 3, (j+1)k + 4$, we can verify the conjecture if we furthermore assume that $j+1$ is also not a power of two, to ensure the vanishing of $m_{j+1}(x,x,\ldots, x)$. 

Since the above arguments only make use of the vanishing of the cohomology in certain degrees and Poincar\'e duality, we are furthermore concluding these underlying cohomology algebras are \emph{intrinsically formal}. \end{rem}

\end{document}